\def\@secnumfont{\bfseries\scshape}
\def\section{\@startsection{section}{1}
  \z@{.9\linespacing\@plus\linespacing}{.5\linespacing}%
  {\normalfont\large\bfseries\scshape\centering}}
\def\subsection{\@startsection{subsection}{2}%
  \z@{.5\linespacing\@plus.7\linespacing}{-.5em}%
  {\normalfont\bfseries\scshape}}
\def\@secnumfont{\scshape}
\def\subsubsection{\@startsection{subsubsection}{3}%
  \z@{.5\linespacing\@plus.7\linespacing}{-.5em}%
  {\normalfont\scshape}}
\def\specialsection{\@startsection{section}{1}%
  \z@{\linespacing\@plus\linespacing}{.5\linespacing}%
  {\normalfont\centering\large\bfseries\scshape}}
\renewenvironment{proof}[1][\proofname]{\par
\pushQED{\qed}%
\normalfont \topsep4\p@\@plus4\p@\relax
\trivlist
\item[\hskip\labelsep
\bfseries
#1\@addpunct{.}]\ignorespaces
}{%
\popQED\endtrivlist\@endpefalse
}
\newcommand \Dotfill {\leavevmode \leaders \hb@xt@ 6pt{\hss .\hss }\hfill \kern \z@}
\def\@tocline#1#2#3#4#5#6#7{\relax
  \ifnum #1>\c@tocdepth 
  \else
    \par \addpenalty\@secpenalty\addvspace{#2}%
    \begingroup \hyphenpenalty\@M
    \@ifempty{#4}{%
      \@tempdima\csname r@tocindent\number#1\endcsname\relax
    }{%
      \@tempdima#4\relax
    }%
    \parindent\z@ \leftskip#3\relax \advance\leftskip\@tempdima\relax
    \rightskip\@pnumwidth plus4em \parfillskip-\@pnumwidth
    #5\leavevmode\hskip-\@tempdima
      \ifcase #1
       \or\or \hskip 1.65em \or \hskip 3.3em \else \hskip 4.95em \fi%
      #6\nobreak\relax
    \Dotfill
    \hbox to\@pnumwidth{\@tocpagenum{#7}}\par
    \nobreak
    \endgroup
  \fi}
\def\l@section{\@tocline{1}{0pt}{1pc}{}{\scshape}}
\renewcommand{\tocsection}[3]{%
\indentlabel{\@ifnotempty{#2}{\ignorespaces#1 #2.\hskip 0.7em}}#3}
\def\l@subsection{\@tocline{2}{0pt}{1pc}{5pc}{}}
\def\l@subsubsection{\@tocline{3}{0pt}{1pc}{7pc}{}}
\numberwithin{equation}{section}
\newtheoremstyle{mytheorem}{.7\linespacing\@plus.3\linespacing}{.7\linespacing\@plus.3\linespacing}%
     {\itshape}
     {}
     {\bfseries}
     {. }
     {0.3ex}
     {\thmname{{\bfseries #1}}\thmnumber{ {\bfseries #2}}\thmnote{ (#3)}}  
\theoremstyle{mytheorem}
\newtheorem{theorem}{Theorem}[section]
\newtheorem{lemma}[theorem]{Lemma}
\newtheorem{remark}[theorem]{Remark}
\newcommand{\bbB}{{\ensuremath{\mathbb B}} }
\newcommand{\bbE}{{\ensuremath{\mathbb E}} }
\newcommand{\bbP}{{\ensuremath{\mathbb P}} }
\newcommand{\bbT}{{\ensuremath{\mathbb T}} }
\newcommand{\cA}{{\ensuremath{\mathcal A}} }
\newcommand{\cB}{{\ensuremath{\mathcal B}} }
\newcommand{\cD}{{\ensuremath{\mathcal D}} }
\newcommand{\cE}{{\ensuremath{\mathcal E}} }
\newcommand{\cN}{{\ensuremath{\mathcal N}} }
\newcommand{\cS}{{\ensuremath{\mathcal S}} }
\newcommand{\dd}{\text{\rm d}}             
\newcommand{\R}{\mathbb{R}}
\newcommand{\Z}{\mathbb{Z}}
\newcommand{\N}{\mathbb{N}}
\newcommand{\T}{\mathbb{T}}
\newcommand{\PEfont}{\mathrm}
\newcommand{\p}{\ensuremath{\PEfont P}}
\newcommand{\E}{\PEfont E}
\renewcommand{\P}{\p}
\DeclareMathOperator{\bbvar}{\ensuremath{\mathbb{V}ar}}
\DeclareMathOperator{\bbcov}{\ensuremath{\mathbb{C}ov}}
\newcommand{\ind}{\mathds{1}}
\newenvironment{myenumerate}{%
\renewcommand{\theenumi}{\arabic{enumi}}%
\renewcommand{\labelenumi}{{\rm(\theenumi)}}%
\begin{list}{\labelenumi}
	{%
	\setlength{\itemsep}{0.4em}%
	\setlength{\topsep}{0.5em}%
	\setlength\leftmargin{2.45em}%
	\setlength\labelwidth{2.05em}%
	\setlength{\labelsep}{0.4em}%
	\usecounter{enumi}%
	}%
	}%
{\end{list}
}
\renewenvironment{enumerate}{
\begin{myenumerate}}%
{\end{myenumerate}}
\newenvironment{myitemize}{%
\begin{list}{$\bullet$}%
 	{%
	\setlength{\itemsep}{0.4em}%
	\setlength{\topsep}{0.5em}%
	\setlength\leftmargin{2.65em}%
	\setlength\labelwidth{2.65em}%
	\setlength{\labelsep}{0.4em}%
	}%
	}%
{\end{list}}
\renewenvironment{itemize}{
\begin{myitemize}}%
{\end{myitemize}}
\newcommand{\rme}{\mathrm{e}}
\newcommand{\dom}{\mathrm{dom}}
\newcommand{\cg}{\mathrm{cg}}
\newcommand{\diff}{\mathrm{diff}}
\title{Gaussian Limits for Subcritical Chaos}
\author{Francesco Caravenna, Francesca Cottini}
\address{Dipartimento di Matematica e Applicazioni\\
 Universit\`a degli Studi di Milano-Bicocca\\
 via Cozzi 55, 20125 Milano, Italy}
\email{francesco.caravenna@unimib.it, francesca.cottini@unimib.it}
\begin{document}

\begin{abstract}
We present a simple criterion, only based on second moment assumptions,
for the convergence of polynomial or Wiener chaos to a Gaussian limit.
We exploit this criterion to obtain new Gaussian asymptotics for 
the partition functions of two-dimensional directed polymers in the sub-critical regime,
including a singular product between the partition function and the disorder.
These results can also be applied to the KPZ and Stochastic Heat Equation.
As a tool of independent interest, we derive an explicit
chaos expansion which sharply approximates
the \emph{logarithm} of the partition function.
\end{abstract}

\keywords{Polynomial Chaos, Wiener Chaos, Central Limit Theorem,
Directed Polymer in Random Environment, Stochastic Heat Equation, 
KPZ Equation, Edwards-Wilkinson Fluctuations}
\subjclass[2010]{Primary: 60F05;  Secondary: 82B44, 35R60}

\maketitle

\section{Introduction}

In this paper we investigate the convergence to a Gaussian limit for 
 random variables that have the structure of a \emph{polynomial chaos},
that is a multi-linear polynomial of independent random variables, or alternatively of
a \emph{Wiener chaos}, that is a sum of multiple Wiener integrals with respect to a
Gaussian random measure.
Our main motivation is the study of directed polymers in random environment,
whose partition function provides a discretization of the solution of the 
multiplicative Stochastic Heat Equation (SHE), while its logarithm corresponds
to the solution of the  KPZ equation.
Many convergence results to Gaussian limits have been obtained
in recent years for directed polymers and for SHE and KPZ
(see the discussion in Section~\ref{sec:applications})
based on polynomial chaos or Wiener chaos,
often exploiting the Fourth Moment Theorem and variations thereof.
Our purpose is to present a general approach which makes it possible to
recover these results in a simpler and unified way and, furthermore, to obtain
novel results. Let us give an overview of the paper.

\smallskip

In Section~\ref{sec:convergence} we state our first main result: a general criterion
for the convergence of polynomial chaos or Wiener chaos to a Gaussian limit 
\emph{only based on second moment assumptions},
see Theorems~\ref{th:main} and~\ref{th:maincont}.
Besides the fact that we do not require higher moment bounds, 
we can work directly with a superposition of chaos of different orders,
with no need of treating them individually as in the Fourth Moment Theorem. 
Our criterion gives conditions that are sufficient, not necessary, but
its simplicity makes it potentially suitable to many different contexts.

\smallskip

In Section~\ref{sec:applications} we study the partition function $Z_N^\beta$
of two-dimensional directed polymers in random environment.
In the limit $N\to\infty$, and for a suitable tuning of the 
inverse temperature $\beta = \beta_N$ (in the so-called sub-critical regime), the partition function
exhibits Edwards-Wilkinson fluctuations \cite{CSZ17b}, i.e.,
it converges to a log-correlated Gaussian field when averaged over the starting point.
An analogous result was obtained in \cite{CSZ20} for the logarithm of the partition function.
Our criterion from Section~\ref{sec:convergence}, besides
providing alternative and more elementary proofs of Edwards-Wilkinson fluctuations,
gives a natural framework to obtain new Gaussian asymptotics.
We give two main illustrations.
\begin{itemize}
\item We prove that a \emph{singular product} between the partition function and the underlying disorder
has a non-trivial Gaussian limit, see Theorem~\ref{th:singular}.
This result sheds light on the mechanism 
which produces Edwards-Wilkinson fluctuations,
explaining the source of the non-trivial factor which arises in the limiting equation.

\item For the partition function $Z_N^\beta$
with a fixed starting point, we obtain an \emph{explicit chaos expansion} 
$X_N^\dom$ which sharply approximates
$\log Z_N^\beta$, see Theorem~\ref{th:apprZ};
then we prove that \emph{$X_N^\dom$, hence $\log Z_N^\beta$ too,
is asymptotically Gaussian}, see Theorem~\ref{th:XNGauss}.
We thus recover the main result in \cite{CSZ17b} with a simpler and more conceptual proof.
\end{itemize}
These results can also be formulated in the continuum
setting of the SHE and KPZ equation. We refer to Subsection~\ref{sec:perspectives}
for a discussion and further perspectives.

\smallskip

The following Sections~\ref{sec:proof-poly}--\ref{sec:XNGauss}
contain the proofs of our main results, while some
technical lemmas have been deferred to Appendix~\ref{sec:technical}.

\section{Gaussian limits for polynomial and Wiener chaos}
\label{sec:convergence}

Our general convergence results can be phrased in a discrete setting (polynomial chaos) and in a
continuum one (Wiener chaos). We start with the former, which is more elementary.

\subsection{Polynomial chaos}

Let $\mathbb{T}$ be a countable set.
For each $N\in\N$, we consider a family $\eta^N = (\eta^N_t)_{t \in \mathbb{T}}$
of \emph{independent random variables}, not necessarily identically distributed,
with zero mean and unit variance:
\begin{equation}  \label{eq:disordergen}
  \mathbb{E} [\eta^N_t] = 0 \,, \qquad \mathbb{E} [(\eta^N_t)^2] = 1 \,.
\end{equation}
We further require the \emph{uniform integrability of the squares}:
\begin{equation}\label{eq:ui}
  \lim_{L \rightarrow \infty} \ \sup_{N \in \mathbb{N}, \; t \in \mathbb{T}} \
  \mathbb{E} \Big[| \eta^N_t |^2\,  \ind_{\{ | \eta_t^N | > L \}} \Big] = 0 \,,
\end{equation}
which follows from \eqref{eq:disordergen} if the
$\eta^N_t$'s have the same distribution. 
In general, a sufficient easy condition for \eqref{eq:ui} is that 
$\sup_{N,t} \bbE[|\eta^N_t|^p] < \infty$ for some $p>2$.

\smallskip

We consider a sequence of random variables $(X_N)_{N\in\N}$ that are polynomial chaos, i.e.\
multi-linear polynomials in the $\eta^N_t$'s. More precisely, we assume that
\begin{equation}\label{eq:polychaosgen}
  X_N = \sum_{A \subset \mathbb{T}} q_N (A)  \,\eta^N (A) \,, \qquad \text{with} \qquad
  \eta^N (A) := \prod_{t \in A} \eta^N_t \,,
\end{equation}
where $q_N(\cdot)$ are real coefficients
and the sum ranges over \emph{finite nonempty subsets $A \subset \bbT$}
(i.e.\ $q_N(A) \ne 0$ only if $0 < |A| < \infty$).
We can split the sum according to the cardinality $k$ of the subset~$A$: if we write $A= \{t_1, \ldots, t_k\}$ for distinct points $t_i \in \T$, we can rewrite \eqref{eq:polychaosgen} as
\begin{equation}\label{eq:polychaosgen-k}
  X_N = \sum_{k=1}^\infty \, 
  \sum_{\substack{\{t_1,\ldots, t_k\} \subset \mathbb{T}\\
  t_i \ne t_j \, \forall i \ne j}} q_N (\{t_1,\ldots, t_k\})  
  \,\prod_{i=1}^k \eta^N_{t_i} \,.
\end{equation}
We assume that $\sum_{A \subset\bbT} q_N(A)^2 < \infty$, so that $X_N$
is a well-defined random variable with
\begin{equation} \label{eq:moments12}
	\mathbb{E} [X_N] = 0, \qquad \mathbb{E} [X_N^2] = \sum_{A \subset \mathbb{T}} 
	q_N(A)^2  \,,
\end{equation}
because $(\eta^N(A))_{A \subset \bbT}$ are
centered and orthogonal random variables in $L^2$.

\smallskip

Our goal is to prove \emph{convergence in distribution of $X_N$ toward a Gaussian random variable}.
This can be achieved via the celebrated \emph{Fourth Moment Theorem},
formulated in our context in \cite{NPR10}
and slightly extended in \cite[Theorem~4.2]{CSZ17b};
see also the previous works \cite{NP05,dJ90,dJ87,R79} and the book \cite{NP12}.
The Fourth Moment Theorem deals with a sequence $X_N$ of polynomial chaos
in a \emph{fixed order chaos} (i.e.\ a single term~$k$ in \eqref{eq:polychaosgen-k}) 
and it requires to compute the \emph{second and fourth moments of $X_N$}.

Our first main result gives sufficient conditions for convergence
to a Gaussian limit  \emph{only based on second moment assumptions on $X_N$},
which can be directly applied to a superposition of chaos of different orders.
Let us introduce the shorthand
  \begin{equation}\label{eq:sigmaB}
  \sigma^2_N (\mathbb{B}) := \sum_{A \subset \mathbb{B}} q_N (A)^2
  \qquad \text{for} \quad \bbB \subset \bbT \,,
  \end{equation}
which gives the contribution to the second moment
of $X_N$ of the subsets of $\bbB$  (recall \eqref{eq:moments12}).
We can formulate our conditions as follows.
\begin{enumerate}
  \item\label{hyp:1disc} \emph{Limiting second moment}:
  \begin{equation} \label{hyp1}
    \lim_{N \rightarrow \infty} \sigma^2_N(\bbT) = 
    \lim_{N \rightarrow
    \infty} \ \sum_{A \subset \mathbb{T}} q_N (A)^2 \,=\, \sigma^2 \in (0, \infty) \,  ,
  \end{equation}
i.e.\ the second moment of $X_N$ converges to a finite limit.

  \item\label{hyp:2disc} \emph{Subcriticality}:
  \begin{equation}
   \lim_{K\to \infty} \ \limsup_{N \to \infty} \ \sum_{\substack{A \subset
    \mathbb{T} \\ |A| > K}} q_N (A)^2 = 0 \,,
    \label{hyp2}
  \end{equation}
i.e.\  the contribution of high order chaos to the second moment of $X_N$ is negligible.

  \item\label{hyp:3disc} \emph{Spectral localization}: 
for any $M, N \in \mathbb{N}$ we can find
\emph{$M$ disjoint subsets (``boxes'')}:
\begin{equation*}
	\mathbb{B}_1, \ldots, \mathbb{B}_M \subset \mathbb{T} \qquad \text{with} \qquad
	\bbB_i \cap \bbB_j = \emptyset \quad \text{for } i\ne j \,,
\end{equation*}
(where $\mathbb{B}_i   =\mathbb{B}_i^{(N, M)}$ 
may depend on $N,M$) such that the following
conditions hold (recall \eqref{eq:sigmaB}):
\begin{gather}\label{hyp3b}
 \lim_{M\to\infty} \ 
  	\lim_{N \rightarrow \infty}  \ \sum_{i = 1}^M \, \sigma^2_N (\mathbb{B}_i) \,=\,
  	\sigma^2 \,,\\
\label{hyp3a}
  \lim_{M \rightarrow \infty} \ \limsup_{N \rightarrow \infty} \  \Big\{ \max_{i =
  		1, \ldots, M} \sigma^2_N (\mathbb{B}_i) \Big\} \,=\, 0 \,,
  \end{gather}
i.e.\ the main contribution to the second moment of $X_N$ comes from \emph{subsets 
contained in one of the boxes $\bbB_1, \ldots, \bbB_M$},
whose individual contribution is uniformly small.
\end{enumerate}
Note that conditions \eqref{hyp:1disc}, \eqref{hyp:2disc}, \eqref{hyp:3disc} are
\emph{second moment assumptions}.
The name ``subcriticality'' for condition \eqref{hyp:2disc}
is inspired by directed polymers, that we discuss in Section~\ref{sec:applications}, and more generally by
marginally relevant disordered systems, see \cite{CSZ17a},
which undergo a phase transition at a critical point
determined precisely by the failure of condition~\eqref{hyp2}.

\smallskip

We can now state our first main result.

\begin{theorem}[Gaussian limits for polynomial chaos]\label{th:main}
Let $X_N$ be a polynomial chaos as in \eqref{eq:polychaosgen},
with coefficients $q_N (\cdot)$ satisfying the assumptions
\eqref{hyp:1disc}, \eqref{hyp:2disc}, \eqref{hyp:3disc}
(see \eqref{hyp1}--\eqref{hyp3a}), with respect to independent
random variables $\eta^N = (\eta^N_t)_{t \in \mathbb{T}}$
which satisfy \eqref{eq:disordergen} and \eqref{eq:ui}. 
Then as $N\to\infty$ we have the convergence in distribution
\begin{equation}\label{eq:main}
	X_N \, \xrightarrow[]{\ d \ } \, \mathcal{N} (0, \sigma^2) \,.
\end{equation}
\end{theorem}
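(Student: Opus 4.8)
The plan is to establish the convergence \eqref{eq:main} through a three-stage reduction, exploiting each hypothesis in turn and deferring all tail control to a single Gaussian-replacement step. Throughout I test against the class $C_b^3(\mathbb{R})$ of bounded functions with bounded derivatives up to order three, which suffices to characterise convergence in distribution, so that it is enough to compare $\mathbb{E}[f(X_N)]$ with $\mathbb{E}[f(G)]$ for $G \sim \mathcal{N}(0,\sigma^2)$.

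\emph{Step 1 (reduction to finite order).} Using subcriticality \eqref{hyp2}, I would first replace $X_N$ by its truncation $X_N^{\le K} := \sum_{0 < |A| \le K} q_N(A)\,\eta^N(A)$ to chaos of order at most $K$. By orthogonality \eqref{eq:moments12}, $\mathbb{E}[(X_N - X_N^{\le K})^2] = \sum_{|A|>K} q_N(A)^2$, which is negligible in the iterated limit $\lim_{K\to\infty}\limsup_{N\to\infty}$. This reduction is essential for what follows: for chaos of order $\le K$ the total influence satisfies $\sum_{t} \sum_{A \ni t} q_N(A)^2 = \sum_A |A|\, q_N(A)^2 \le K\,\sigma^2_N(\mathbb{T})$, a bound that keeps the replacement error of Step 3 finite.

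\emph{Step 2 (reduction to independent box contributions).} For fixed $M$, the spectral localization boxes $\mathbb{B}_1,\dots,\mathbb{B}_M$ give the random variables $Y_i := \sum_{A \subset \mathbb{B}_i} q_N(A)\,\eta^N(A)$. Since the boxes are disjoint and the $\eta^N_t$ are independent, the $Y_i$ are \emph{independent}, mean-zero, with $\mathbb{E}[Y_i^2] = \sigma^2_N(\mathbb{B}_i)$. By orthogonality the remainder $X_N^{\le K} - \sum_{i=1}^M Y_i$ collects exactly the subsets not contained in any single box, so its second moment equals $\sigma^2_N(\mathbb{T}) - \sum_i \sigma^2_N(\mathbb{B}_i)$ up to the already-truncated high-order terms; this vanishes in the iterated limit by \eqref{hyp1} and \eqref{hyp3b}. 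Hence it suffices to prove $\sum_{i=1}^M Y_i \xrightarrow{d} \mathcal{N}(0,\sigma^2)$. Crucially, for $t \in \mathbb{B}_i$ the influence of $\eta^N_t$ inside $\sum_j Y_j$ is at most $\sigma^2_N(\mathbb{B}_i) \le \max_j \sigma^2_N(\mathbb{B}_j)$, which tends to $0$ by \eqref{hyp3a}.

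\emph{Step 3 (Gaussian replacement and Lindeberg CLT).} Here I replace the disorder $\eta^N_t$ by standard Gaussians $g_t$, one variable at a time, obtaining $\widetilde Y_i$ and $\widetilde S := \sum_i \widetilde Y_i$. Multilinearity is what makes this work with only second-moment information: $\sum_j Y_j$ is \emph{degree one} in each $\eta^N_t$, so swapping $\eta^N_t \leftrightarrow g_t$ leaves a third-order Taylor remainder, and since $\eta^N_t$ and $g_t$ share their first two moments the lower-order terms cancel in expectation. The accumulated error over all $t$ is controlled by $\|f'''\|_\infty$ times the total influence $\le K\sigma^2_N(\mathbb{T})$ times a factor that vanishes once $\max_i \sigma^2_N(\mathbb{B}_i) \to 0$; the passage from absent third moments to the uniform square-integrability \eqref{eq:ui} is handled by truncating each $\eta^N_t$ at a level $L$ and sending $L\to\infty$ last. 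Finally $\widetilde S$ is a sum of \emph{independent} Gaussian chaos $\widetilde Y_i$ of bounded order $K$, for which hypercontractivity gives $\mathbb{E}[\widetilde Y_i^4] \le C_K\, \sigma^2_N(\mathbb{B}_i)^2$; the Lindeberg condition then follows from $\sum_i \mathbb{E}[\widetilde Y_i^2; |\widetilde Y_i|>\epsilon] \le \epsilon^{-2} C_K (\max_i \sigma^2_N(\mathbb{B}_i))(\sum_i \sigma^2_N(\mathbb{B}_i)) \to 0$, so the classical Lindeberg CLT yields $\widetilde S \xrightarrow{d} \mathcal{N}(0,\sigma^2)$. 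The delicate point throughout is Step 3: carrying out the Gaussian replacement with \emph{only} second moments and \eqref{eq:ui}, and organising the truncation level $L$ together with the nested limits $N\to\infty$, $M\to\infty$, $K\to\infty$ so that every error term vanishes in a consistent order — this is where the iterated-limit structure of \eqref{hyp3b}--\eqref{hyp3a} is used decisively.
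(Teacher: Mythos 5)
Your proposal is correct and follows essentially the same route as the paper's proof: order truncation plus box localization to produce independent summands, a Lindeberg-type swap of the $\eta^N_t$'s for Gaussians (using \eqref{eq:ui} and the smallness of maximal influences coming from \eqref{hyp3a}), hypercontractivity of Gaussian chaos, and the CLT for triangular arrays. The differences are purely organizational — the paper performs the Gaussian replacement first on the full chaos and works with diagonal sequences $K_N, M_N \to \infty$ and a Lyapunov exponent $p=3$, whereas you localize first, keep $K,M$ fixed with iterated limits, and use a fourth-moment bound — none of which changes the substance of the argument.
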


The proof is given in Section~\ref{sec:proof-poly} and comes in two steps:
\begin{itemize}
\item first we approximate $X_N$ in $L^2$ by a sum
$\sum_{i=1}^M X_{N,i}$
of \emph{independent} random variables,
for a suitable $M = M_N \to \infty$;

\item then we show that the random variables
$(X_{N,i})_{1 \le i \le M_N}$ satisfy the assumption of the
\emph{Central Limit Theorem for triangular arrays},
which eventually yields \eqref{eq:main}.
\end{itemize}
We will also replace the random variables $(\eta^N_t)$ by 
a family of random variables with bounded moments
of some order $p > 2$ (e.g.\ by Gaussians)
to exploit the hypercontractivity of polynomial chaos, see \cite{MOO10}.
The justification of this replacement will be given at the end of the proof
exploiting a suitable Lindeberg principle, see \cite{MOO10,CSZ17a}.

\begin{remark}
When the polynomial chaos $X_N$
belongs to a fixed order chaos,
the conditions of the Fourth Moment Theorem
are known to be \emph{optimal}, i.e.\ 
necessary and sufficient for the asymptotic Gaussianity of~$X_N$. 
It would be interesting to investigate how far from optimality are our conditions 
\eqref{hyp1}--\eqref{hyp3a} in this setting.
A direct comparison between our conditions and the Fourth Moment Theorem
is not straightforward, due to
the freedom in the choice of the boxes $\bbB_i$ in \eqref{hyp3b}-\eqref{hyp3a}.
\end{remark}

\subsection{Wiener chaos}

Theorem~\ref{th:main} has a direct translation for Wiener chaos.
Let $(E,\cE,\mu)$ be a Polish (complete separable metric) space, endowed with
its Borel $\sigma$-field $\cE$ and with a non-atomic measure $\mu$.
Let $\cE^* = \{A \in \cE: \ \mu(A)<\infty\}$ be the class
of measurable sets with finite measure.
By \emph{Gaussian random measure} on $(E,\cE,\mu)$
we mean a centered Gaussian process $W = (W(A))_{A \in \cE^*}$
with $\bbcov[W(A),W(B)] = \mu(A \cap B)$,
defined on some probability space $(\Omega,\cA,\bbP)$.
We often use the informal notation $W(\dd x)$.
The most important example is given by \emph{white noise},
which corresponds to $E = \R^d$ 
 with $\mu =$ Lebesgue measure.

We fix a Gaussian random measure $W(\dd x)$ on $(E,\cE,\mu)$.
For every $k\in\N$ and every real function
$f \in L^2(E^k, \mu^{\otimes k})$, by \cite{Ito51,NP12} we can define the stochastic integral
\begin{equation*}
	W^{\otimes k}(f) = \int_{E^k} f(x_1,\ldots, x_k) \, W(\dd x_1) \cdots W(\dd x_k) 
\end{equation*}
which is a centered random variable in $L^2(\Omega)$ (non Gaussian as soon as $k > 1$
and $f\not\equiv 0$).
For \emph{symmetric} functions 
$f \in L^2(E^k, \mu^{\otimes k})$ and $g \in L^2(E^{k'}, \mu^{\otimes k'})$ we have
the \emph{Ito isometry}:
\begin{equation}\label{eq:Ito}
\begin{split}
	\bbE[W^{\otimes k}(f) \, W^{\otimes k'}(g)] 
	& \,=\, \ind_{\{k=k'\}} \, k! \, 
	\langle f,g \rangle_{L^2(E^k,\mu^{\otimes k})} \\
	&\,=\, \ind_{\{k=k'\}} \, k! 
	\int_{E^k} f(x_1, \ldots, x_k) \, g(x_1, \ldots, x_k) \, \mu(\dd x_1) \cdots \mu(\dd x_k) \,.
\end{split}
\end{equation}

\smallskip

In this ``continuum setting'',
in analogy with the discrete polynomial chaos \eqref{eq:polychaosgen-k},
we consider a sequence $(\tilde X_N)_{N\in\N}$ of Wiener chaos with respect to $W(\dd x)$,
that is
\begin{equation}\label{eq:Wienerchaos}
 \tilde  X_N = \sum_{k=1}^\infty \,\int_{E^k} \, \tilde q_N(x_1, \ldots, x_k)  \,
  W(\dd x_1) \cdots W(\dd x_k) \,,
\end{equation}
where $\tilde q_N$ is a \emph{symmetric} $L^2$ function defined on $\bigcup_{k=1}^\infty
(E^k, \cE^{\otimes k}, \mu^{\otimes k})$.
Then, by \eqref{eq:Ito},
\begin{equation} \label{eq:moments12Ito}
\begin{split}
	\bbE[\tilde X_N] = 0 \,, \quad \
	\bbE[\tilde X_N^2]
	& = \sum_{k=1}^\infty k! \, \|\tilde q_N\|_{L^2(E^k)}^2 
	 = \sum_{k=1}^\infty k! 
	\, \int_{E^k} \tilde q_N (x_1,\ldots, x_k)^2 \, \mu(\dd x_1) \cdots \mu(\dd x_k) \,.
\end{split}
\end{equation}

\begin{remark}
Every centered random variable in $L^2(\Omega)$, which is measurable
with respect to the $\sigma$-algebra generated by $W$,
admits an expansion like \eqref{eq:Wienerchaos}.
\end{remark}

\begin{remark}
The factor $k!$ in \eqref{eq:moments12Ito} is due to the fact that $\tilde q_N$ 
in \eqref{eq:Wienerchaos}  is a symmetric
function of the \emph{ordered} variables $x_1, \ldots, x_k$, whereas $q_N$ in \eqref{eq:polychaosgen-k}
is a function of \emph{unordered} variables (i.e.\ subsets) $\{t_1,\ldots, t_k\}$.
To formally match \eqref{eq:polychaosgen-k}-\eqref{eq:moments12} with 
\eqref{eq:Wienerchaos}-\eqref{eq:moments12Ito}, we should identify $q_N$ with
$k! \, \tilde q_N$ and $\sum_{\{t_1,\ldots, t_k\} \subset \bbT} \, \prod_{i=1}^k \eta^N_{t_i}$ with
$\frac{1}{k!} \int_{E^k} W(\dd x_1) \cdots W(\dd x_k)$.
\end{remark}

Mimicking \eqref{eq:sigmaB}, we set
  \begin{equation}\label{sigmaBcont}
 \tilde \sigma^2_N (\mathbb{B}) := \sum_{k=1}^\infty \,
   k! \, \int_{\bbB^k} \tilde q_N (x_1,\ldots, x_k)^2 \, \mu(\dd x_1) \cdots \mu(\dd x_k)
  \qquad \text{for measurable} \ \bbB \subset E \,,
  \end{equation}
which gives the contribution to the second moment
of $\tilde X_N$ of subsets in $\bbB$, see \eqref{eq:moments12Ito}.
We can now formulate our conditions in the continuum setting.
\begin{enumerate}
\renewcommand{\theenumi}{$\tilde{\arabic{enumi}}$}%
  \item\label{hyp:1cont} \emph{Limiting second moment}:
  \begin{equation} \label{hyp1cont}
  \lim_{N \rightarrow
    \infty} \  \tilde \sigma_N^2(E) \,=\,
    \lim_{N\to\infty} \ \sum_{k=1}^\infty k! \, \|\tilde q_N\|_{L^2(E^k)}^2
    \,=\, \sigma^2 \in (0, \infty) \,  ,
  \end{equation}
i.e.\ the second moment of $\tilde X_N$ converges to a finite limit.

  \item\label{hyp:2cont} \emph{Subcriticality}:
  \begin{equation}
   \lim_{K\to \infty} \ \limsup_{N \to \infty} \ \sum_{k > K} k! \, \|\tilde q_N\|_{L^2(E^k)}^2 = 0 \,,
    \label{hyp2cont}
  \end{equation}
i.e.\  the contribution of high order chaos to the second moment of $\tilde X_N$ is negligible.

  \item\label{hyp:3cont} \emph{Spectral localization}: 
for any $M, N \in \mathbb{N}$ we can find
\emph{$M$ disjoint subsets (``boxes'')}:
\begin{equation*}
	\mathbb{B}_1, \ldots, \mathbb{B}_M \subset E \qquad \text{with} \qquad
	\bbB_i \cap \bbB_j = \emptyset \quad \text{for } i\ne j 
\end{equation*}
(where $\mathbb{B}_i   =\mathbb{B}_i^{(N, M)}$
may depend on $N,M$) such that, recalling \eqref{sigmaBcont},
\begin{gather}\label{hyp3bcont}
\lim_{M\to\infty} \
  	\lim_{N \rightarrow \infty}  \ \sum_{i = 1}^M \, \tilde \sigma^2_N (\mathbb{B}_i) \,=\,
  	\sigma^2 \,,\\
\label{hyp3acont}
  \lim_{M \rightarrow \infty} \ \lim_{N \rightarrow \infty} \  \Big\{ \max_{i =
  		1, \ldots, M} \tilde \sigma^2_N (\mathbb{B}_i) \Big\} \,=\, 0 \,,
  \end{gather}
i.e.\ the main contribution to the second moment of $\tilde X_N$ comes from \emph{subsets 
contained in one of the $M$ boxes $\bbB_1, \ldots, \bbB_M$},
whose individual contribution is uniformly small.
\end{enumerate}

\smallskip

We can finally state the version of Theorem~\ref{th:main} for Wiener chaos.
We omit the proof because it follows very closely that of Theorem~\ref{th:main},
given in Section~\ref{sec:proof-poly}.

\begin{theorem}[Gaussian limits for Wiener chaos]\label{th:maincont}
Let $\tilde X_N$ be a Wiener chaos as in \eqref{eq:Wienerchaos},
with coefficients $\tilde q_N (\cdot)$ satisfying the assumptions
\eqref{hyp:1cont}, \eqref{hyp:2cont}, \eqref{hyp:3cont}
(see \eqref{hyp1cont}--\eqref{hyp3acont}),
with respect to a Gaussian random measure $W(\dd x)$ on a Polish measure space
$(E, \cE, \mu)$. Then as $N\to\infty$ we have the convergence in distribution
\begin{equation}\label{eq:maincont}
	\tilde X_N \, \xrightarrow[]{\ d \ } \, \mathcal{N} (0, \sigma^2) \,.
\end{equation}
\end{theorem}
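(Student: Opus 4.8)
The plan is to follow the two-step strategy announced for Theorem~\ref{th:main}, simplified by the fact that in the present setting the noise $W$ is \emph{already Gaussian}: no Lindeberg replacement of the disorder is needed, and the hypercontractivity of Wiener chaos is directly available. The key structural observation is that, since $\bbcov[W(A),W(B)]=\mu(A\cap B)$, the restrictions of $W$ to the \emph{disjoint} boxes $\bbB_1,\dots,\bbB_M$ provided by \eqref{hyp:3cont} are jointly Gaussian and pairwise uncorrelated, hence \emph{independent}. Accordingly, setting
\begin{equation*}
\tilde X_{N,i} := \sum_{k=1}^\infty \int_{\bbB_i^k} \tilde q_N(x_1,\dots,x_k)\, W(\dd x_1)\cdots W(\dd x_k)
\end{equation*}
(the part of $\tilde X_N$ whose integration variables all lie in $\bbB_i$), the random variables $\tilde X_{N,1},\dots,\tilde X_{N,M}$ are \emph{independent}, centered, each a Wiener chaos, with $\bbE[\tilde X_{N,i}^2]=\tilde\sigma_N^2(\bbB_i)$ by \eqref{eq:moments12Ito}.

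First I would approximate $\tilde X_N$ in $L^2$ by an independent, finite-order sum. Because the boxes are disjoint, the symmetric kernels $\tilde q_N\,\ind_{\bbB_i^k}$ have pairwise disjoint supports in $E^k$, and the leftover kernel supported on $E^k\setminus\bigcup_i\bbB_i^k$ is orthogonal to all of them; the Itô isometry \eqref{eq:Ito} then gives
\begin{equation*}
\bbE\Big[\Big(\tilde X_N-\sum_{i=1}^M \tilde X_{N,i}\Big)^2\Big] = \bbE[\tilde X_N^2]-\sum_{i=1}^M \tilde\sigma_N^2(\bbB_i)\,,
\end{equation*}
which vanishes in the iterated limit $\lim_{M\to\infty}\lim_{N\to\infty}$ by \eqref{hyp1cont} and \eqref{hyp3bcont}. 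Truncating moreover each $\tilde X_{N,i}$ to chaos orders $k\le K$, the discarded tail is bounded uniformly by $\sum_{k>K}k!\,\|\tilde q_N\|_{L^2(E^k)}^2$, hence negligible by subcriticality \eqref{hyp2cont}. It therefore suffices to prove Gaussian convergence of the independent, order-$\le K$ sums $\sum_{i=1}^M \tilde X_{N,i}^{\le K}$ and to transfer it to $\tilde X_N$ by a standard ``convergence together'' argument.

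The second step is the Lindeberg central limit theorem for the independent triangular array $(\tilde X_{N,i}^{\le K})_{1\le i\le M}$. Their variances sum to at most $\sum_i \tilde\sigma_N^2(\bbB_i)\to\sigma^2$, so it remains to check the Lindeberg condition; using $\bbE[Y^2\,\ind_{\{|Y|>\eps\}}]\le \eps^{-2}\,\bbE[Y^4]$ this reduces to showing $\sum_i \bbE[(\tilde X_{N,i}^{\le K})^4]\to 0$. Since each summand is now a chaos of order $\le K$, Nelson's hypercontractive inequality (see e.g.\ \cite{NP12}) yields $\bbE[(\tilde X_{N,i}^{\le K})^4]\le C_K\,(\bbE[(\tilde X_{N,i}^{\le K})^2])^2$ with $C_K=9^K$, whence
\begin{equation*}
\sum_{i=1}^M \bbE[(\tilde X_{N,i}^{\le K})^4] \le C_K\,\Big(\max_{1\le i\le M}\tilde\sigma_N^2(\bbB_i)\Big)\sum_{i=1}^M \tilde\sigma_N^2(\bbB_i)\,.
\end{equation*}
For \emph{fixed} $K$ the constant $C_K$ is harmless, while \eqref{hyp3acont} forces the maximum to vanish and the last sum stays bounded by $\sigma^2$; hence the fourth-moment sum tends to $0$ in the limit $\lim_{M\to\infty}\lim_{N\to\infty}$, and the array is asymptotically Gaussian with variance $v_K:=\lim_{M}\lim_{N}\sum_i \bbE[(\tilde X_{N,i}^{\le K})^2]$.

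Finally I would assemble the nested limits. Letting $K\to\infty$ last, subcriticality \eqref{hyp2cont} together with \eqref{hyp1cont} and \eqref{hyp3bcont} gives $v_K\to\sigma^2$ and makes the order-truncation error negligible, so that $\mathcal{N}(0,v_K)\to\mathcal{N}(0,\sigma^2)$; combined with the $L^2$-approximation of the first step and a diagonal extraction of $M=M_N$, $K=K_N\to\infty$, this yields $\tilde X_N\xrightarrow[]{\ d\ }\mathcal{N}(0,\sigma^2)$. The main obstacle is precisely the bookkeeping of this threefold iterated limit: the hypercontractivity constant $C_K$ blows up with the chaos order, so it must be \emph{frozen} while the uniform box-smallness \eqref{hyp3acont} does its work. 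Taking $K$ as the \emph{outermost} limit is what makes the argument go through cleanly — at the modest price of a careful but routine diagonalization to produce a single sequence realizing all three limits at once.
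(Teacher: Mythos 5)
Your proposal is correct and follows, in substance, the route the paper intends: Theorem~\ref{th:maincont} is stated without proof precisely because it mimics the proof of Theorem~\ref{th:main} in Section~\ref{sec:proof-poly}, and your decomposition into box-restricted chaoses $\tilde X_{N,i}$, the independence coming from disjointness of the $\bbB_i$, the $L^2$/It\^o-isometry approximation, the chaos-order truncation, and the triangular-array CLT with a hypercontractive Lyapunov-type bound are exactly the ingredients there; you also correctly observe that the Lindeberg-replacement step of Subsection~\ref{sec:Lindeberg-principle} is vacuous here since $W$ is already Gaussian.

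The one genuine difference is how the truncation order is driven to infinity. The paper diagonalizes: it fixes $K_N \to \infty$ subject to the rate condition \eqref{eq:KN}, so that the hypercontractivity factor $(p-1)^{K_N}$ (with $p=3$) is beaten by the decay of $\max_i \sigma_N^2(\bbB_i)$; this is the only delicate choice in Section~\ref{sec:proof-poly}. You instead freeze $K$, run the CLT for fixed $K$ with the harmless constant $C_K = 9^K$ (your $p=4$ variant of the Lyapunov bound), and send $K\to\infty$ last via a convergence-together argument, with subcriticality \eqref{hyp2cont} controlling the truncation error uniformly in $N$. This buys you freedom from any rate condition linking $K$ to $\max_i \tilde\sigma_N^2(\bbB_i)$; what it costs is the extra approximation layer, and two details you should tighten. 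First, for fixed $K$ the row variance $\sum_{i}\bbE[(\tilde X_{N,i}^{\le K})^2]$ need not converge, so your $v_K$ may not exist; compare instead with $\cN(0,s_{N,K}^2)$, where $s_{N,K}^2$ is the actual row variance, which by \eqref{hyp1cont}, \eqref{hyp3bcont} and \eqref{hyp2cont} lies within an error $\epsilon_K \to 0$ of $\sigma^2$ for large $N$---this suffices for the final assembly. Second, your closing appeal to ``a diagonal extraction of $K=K_N\to\infty$'' sits uneasily with your own (correct) insistence that $K$ stay outermost: if $K_N$ were diagonalized \emph{inside} the CLT step you would be back to needing \eqref{eq:KN}. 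Any diagonalization must happen \emph{after} the fixed-$K$ Gaussian limits have been established, i.e.\ at the level of laws, where no constant $C_K$ enters; phrased that way (or simply via the standard approximation theorem for weak convergence) your argument is complete.
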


\section{Applications to directed polymers}
\label{sec:applications}

We now present applications of our convergence results in Section~\ref{sec:convergence}
to directed polymers in random environment on $\Z^2$.

\subsection{Directed polymers and stochastic PDEs}

Let $S=(S_n)_{n\ge 0}$ be the simple symmetric random walk on $\Z^2$,
whose law we denote by $\P$. Let $\omega = (\omega(n,x))_{n\in\N, x \in \Z^2}$ be a family
of i.i.d.\ random variables, independent of $S$,
with law $\bbP$ and such that
\begin{equation}\label{eq:disorder}
	\bbE[\omega(n,x)] = 0 \,, \qquad \bbE[\omega(n,x)^2] = 1 \,, \qquad
	\lambda(\beta) := \log \bbE[\rme^{\beta \omega(n,x)}] < \infty
	\quad \forall \beta > 0 \,.
\end{equation}
Intuitively, trajectories of the random walk $S$ represent
polymer configurations, while configurations $\omega$
describe the \emph{disorder}, which plays the role of a \emph{random environment}.
Given a scale parameter $N\in\N$, a starting time-space point $(m,z) \in \{0,\ldots, N\}\times \Z^2$ and
an interaction strength $\beta > 0$, the partition function of the directed polymer model is
\begin{equation}\label{eq:ZN}
	Z_N^{\beta}(m,z) := \E \Big[\rme^{\sum_{n=m+1}^N 
	(\beta \omega(n,S_n) - \lambda(\beta))}
	\,\Big|\, S_m = z \Big] \,.
\end{equation}
Directed polymers were originally introduced as an effective interface model 
in the framework of the Ising model with impurities, but over the years
they have become an object of independent study
and a prototype of a disorder system which is amenable to detailed rigorous investigation.
We refer to the monograph by Comets \cite{C17} for a recent account.

\smallskip

A source of interest for directed polymers is their link with the multiplicative
Stochastic Heat Equation (SHE), which is the stochastic PDE formally written as follows:
\begin{equation}\label{eq:she}
	\partial_t u(t,x) = \frac{1}{2} \Delta_x u(t,x) \,+\, \beta \, \dot{W}(t,x) \, u(t,x) \,,
\end{equation}
where $\beta > 0$ tunes the interaction strength and $\dot{W}(t,x)$ denotes 
white noise on $(0,\infty) \times \R^2$.
In one space dimension $d=1$, this equation admits a rigorous integral formulation by 
the classical Ito-Walsh integration. In higher dimensions $d \ge 2$, this approach fails
due to strong irregularity of white noise
and no obvious meaning can be given to its solution $u(t,x)$.

By the Markov property of simple random walk, the diffusively rescaled partition function 
\begin{equation} \label{eq:diffZ}
	U_N(t,x) := Z_N^{\beta}(\lfloor Nt\rfloor, \lfloor \sqrt{N}x\rfloor) 
\end{equation}
solves a discretized version of \eqref{eq:she}
(with $\partial_t$ and $\frac{1}{2}\Delta_x$ replaced by
$-\partial_t$ and $\frac{1}{4} \Delta_x$,
see \eqref{eq:she-discrete2} below).
This explains the interest for the convergence as $N\to\infty$ of $U_N(t,x)$,
possibly for suitable $\beta = \beta_N$,
since it provides an approximation of the ill-defined SHE solution $u(t,x)$.

It is also very interesting to look at the \emph{logarithm of the partition function}
\begin{equation*}
	\log Z_N^{\beta}(\lfloor Nt\rfloor, \lfloor \sqrt{N}x\rfloor)
\end{equation*}
because it provides an approximation for the solution $h(t,x) = \log u(t,x)$
of the Kardar-Parisi-Zhang equation (KPZ),
which is the stochastic PDE formally given by
\begin{equation}\label{eq:kpz}
	\partial_t h(t,x) = \frac{1}{2} \Delta_x h(t,x) \,+\,
	\frac{1}{2} |\nabla_x h(t,x)|^2 \,+\, \beta \, \dot{W}(t,x) \
	\text{``}-\, \infty\,\text{''} \,,
\end{equation}
where the last term ``$-\infty$'' indicates a form of renormalization.

\begin{remark}[Edwards-Wilkinson equation]
The Stochastic Heat Equation \eqref{eq:she} is singular due to the
\emph{multiplicative noise term} $\dot{W} u$. 
The additive version of this equation,  known as the \emph{Edwards-Wilkinson equation},
is well-posed and reads as follows:
\begin{equation}\label{eq:EW}
	\partial_t v (t,x) = \frac{s}{2} \Delta_x v(t,x) + c \, \dot{W}(t,x) \,,
\end{equation}
where $s > 0$ and $c \in \R$ are given parameters.
Starting from $v(0,\cdot) \equiv 0$,
the solution $v = v^{(s,c)}$ 
is a random \emph{distribution} (i.e.\ generalized function)
which is Gaussian 
with explicit covariance,
see \cite[Remark~1.5]{CSZ20}.
More precisely, if we denote by $\langle v^{(s,c)}, \psi\rangle$
the pairing between the distribution $ v^{(s,c)}$ and a test function
$\psi$, which formally corresponds to
\begin{equation}\label{eq:average}
	\langle v^{(s,c)}, \psi\rangle := \int_{\R^2} v^{(s,c)}(t,x) \, \psi(t,x) \, \dd t \, \dd x \,,
\end{equation}
then $\langle v^{(s,c)}, \psi\rangle$ for $\psi \in C^\infty_c([0,\infty) \times \R^2)$
is a centered Gaussian process with
\begin{equation}\label{eq:covEW}
\begin{split}
	\bbcov\big[ \langle v^{(s,c)}, \psi\rangle, \,  \langle v^{(s,c)}, \psi'\rangle \big]
	= \int_{([0,\infty)\times\R^2)^2} \psi(t,x) \, K_{t,t'}^{(s,c)}(x,x') \, \psi'(t',x')
	\, \dd t \, \dd x \, \dd t' \, \dd x' \,,
\end{split}
\end{equation}
where the covariance kernel is given by
\begin{equation}\label{eq:covEW2}
	K_{t,t'}^{(s,c)}(x,x') 
	:= \frac{s \, c^2}{2} \, \int_{s|t-t'|}^{s(t+t')}
	g_{u}(x-x') \, \dd u \,, \qquad \text{where} \qquad
	g_u(y) := \frac{\rme^{-\frac{|y|^2}{2u}}}{2\pi u} \,.
\end{equation}
\end{remark}

\subsection{Edwards-Wilkinson fluctuations}
Let us define 
\begin{gather}
	\label{eq:uN}
	u_n := \sum_{z\in\Z^2} \P(S_n=z)^2 = \P(S_{2n}=0) \sim \frac{1}{\pi} \, \frac{1}{n} \,, \\
	\label{eq:RN}
	R_N := \sum_{n=1}^N \sum_{z\in\Z^2} \P(S_n=z)^2
	= \sum_{n=1}^N u_n \sim \frac{1}{\pi} \log N \,,
\end{gather}
where the asymptotic relations (respectively as $n\to\infty$ and as $N\to\infty$)
follow by the local central limit theorem (see \eqref{eq:llt} below).
Henceforth we are going to fix $\beta = \beta_N$ given by
\begin{equation} \label{eq:sub-critical}
	\beta_N  := \frac{\hat\beta}{\sqrt{R_N}} \sim \frac{\hat\beta \, \sqrt{\pi}}{\sqrt{\log N}}  \qquad
	\text{with} \qquad \hat\beta \in (0, 1) \,,
\end{equation}
also known as the \emph{sub-critical regime}. This ensures that the partition function $Z_N^{\beta_N}$
has a bounded second moment as $N\to\infty$, see \cite{CSZ17b}. 
It was recently shown in \cite{LZ21+,CZ21+} that in fact \emph{all moments of $Z_N^{\beta_N}$ 
are bounded
in this regime}.

\smallskip

We look at the fluctuations of the diffusively rescaled partition function, encoded by
\begin{equation} \label{eq:JN}
	V_N(t,x) := \frac{1}{\beta_N} \big( 
	Z_N^{\beta_N}(\lfloor N t \rfloor, \lfloor \sqrt{N}x\rfloor)
	- 1 \big) \qquad \text{for} \quad (t,x) \in [0,1] \times \R^2 \,.
\end{equation}
It was shown in \cite[Theorem~2.13]{CSZ17b} that 
$Z_N^{\beta_N}$ exhibits \emph{Edwards-Wilkinson fluctuations}, because
$V_N(t,x)$ converges as $N\to\infty$
to a solution of the Edwards-Wilkinson equation~\eqref{eq:EW}:
\begin{equation}\label{eq:cdef}
	V_N(t,x) \, \overset{\cD}{\Longrightarrow} \,
	\tilde v(t,x) := v^{(\frac{1}{2}, c_{\hat\beta})}(1-t , x) \qquad \text{where} \quad
	c_{\hat\beta} := \sqrt{\frac{1}{1-\hat\beta^2}}  \,,
\end{equation}
where ``$\overset{\cD}{\Longrightarrow}$'' denotes convergence in law
\emph{as a random distribution}:\footnote{By the Cram\'er-Wold device
\cite[Theorem~29.4]{Bil95},
relation \eqref{eq:EW-basic} implies convergence of all finite-dimensional distributions
of the random field $(\langle V_N, \psi \rangle)_\psi$
toward $\langle \tilde v, \psi \rangle$.}
for $\psi \in C_c([0,1]\times \R^2)$
\begin{equation} \label{eq:EW-basic}
\begin{split}
	\langle V_N, \psi \rangle
	:= \int_{\R \times\R^2} V_N(t,x) \, \psi(t,x) \, \dd t \, \dd x 
	\ \xrightarrow[]{\ d \ } \ 
	\langle \tilde v, \psi \rangle  \,.
\end{split}
\end{equation}
The convergence \eqref{eq:cdef} was  proved
in \cite{CSZ17b} using the Fourth Moment Theorem,
based on a polynomial chaos expansion of the partition function,
see \eqref{eq:Zpoly} below.
Remarkably, our Theorem~\ref{th:main} allows for an
\emph{alternative and more elementary proof of \eqref{eq:cdef},
based on second moments calculations}.
The details will be presented in~\cite{cf:Cottini}.

\begin{remark}\label{rem:var12}
The factor $\frac{1}{2}$ 
in the parameters of $\tilde v(t,x) = v^{(\frac{1}{2}, c_{\hat\beta})}(1-t,x) $,
see \eqref{eq:cdef}, is due to the fact that 
$\E[S_1^{(i)}, S_1^{(j)}] = \frac{1}{2} \ind_{i=j}$ for $i,j \in \{1,2\}$.
In view of \eqref{eq:EW}, note that $\tilde v$ satisfies
\begin{equation} \label{eq:EWmod}
	-\partial_t \tilde v(t,x) = \frac{1}{4} \Delta_x \tilde v(t,x) + c_{\hat\beta} \,
	\dot{W}(t,x) \,.
\end{equation}
\end{remark}

Edwards-Wilkinson fluctuations also hold for the logarithm of the partition function,
suitably centered and rescaled as in \eqref{eq:JN}:
\begin{equation} \label{eq:HN}
	H_N(t,x) := \frac{1}{\beta_N} \Big( 
	\log Z_N^{\beta_N}(\lfloor Nt\rfloor, \lfloor \sqrt{N}x\rfloor)
	- \bbE\big[ \log Z_N^{\beta_N}(\lfloor Nt\rfloor, \lfloor \sqrt{N}x\rfloor) \big] \Big) \,.
\end{equation}
Indeed, it was shown in \cite[Theorem~1.6]{CSZ20} that
a precise analogue of \eqref{eq:cdef} holds:
\begin{equation}\label{eq:cdef2}
	H_N(t,x) \, \overset{\cD}{\Longrightarrow} \,
	\tilde v(t,x) = v^{(\frac{1}{2}, c_{\hat\beta})}(1-t , x) \,.
\end{equation}
This convergence was in fact \emph{deduced}
in \cite{CSZ20} from \eqref{eq:cdef} by
means of a highly non trivial linearization procedure.
The alternative and more elementary proof of \eqref{eq:cdef}
based on our Theorem~\ref{th:main} can then be transferred to yield
a proof of \eqref{eq:cdef2} as well.
We refrain from giving the details, which will be presented
in~\cite{cf:Cottini}. 

\begin{remark}\label{rem:literature}
A simultaneous and independent proof of \eqref{eq:cdef2}
was given in~\cite{G20} for small $\hat\beta > 0$ in a closely related
context, namely for the KPZ equation \eqref{eq:kpz}
where the noise $\dot{W}(t,x)$ is regularized by mollification (rather than by
discretization, as we consider here). Previously,
the existence of non-trivial subsequential limits had been shown in~\cite{CD20}.
We refer to \cite{DG20+,NN21+} for some recent extensions and generalizations.
\end{remark}

In this paper, we exploit Theorem~\ref{th:main}
to prove two new Gaussian convergence results related to the partition function,
that we now describe.

\smallskip

\subsection{Main result I (singular product)}

The diffusively rescaled partition function $U_N(t,x)$ in \eqref{eq:diffZ}
approximates the solution of the Stochastic Heat Equation
\eqref{eq:she} with \emph{multiplicative} noise.
It is not clear a priori why the fluctuations of $U_N(t,x)$, encoded by $V_N(t,x)$
in \eqref{eq:JN}, converge to $\tilde v(t,x)$ which solves the Stochastic Heat Equation
with  \emph{additive} noise,
see \eqref{eq:EWmod}, with an intensity $c_{\hat\beta}$
which \emph{explodes} as $\hat\beta \uparrow 1$.
We now present a result which sheds light on the mechanism which leads to \eqref{eq:EWmod}.

\smallskip

Let us introduce a modified 
disorder $\eta_N = (\eta_N(m,z))_{m\in\N, z\in\Z^2}$, recalling \eqref{eq:disorder}:
\begin{equation} \label{eq:eta}
	\eta_N(m,z) := \frac{\rme^{\beta_N \omega(m,z) - \lambda(\beta_N)}-1}{\sigma_N}
	\qquad \text{where} \qquad 
	\sigma_N^2 := \rme^{\lambda(2\beta_N)-2\lambda(\beta_N)}-1 
	\underset{N\to\infty}{\sim} \beta_N^2 \,.
\end{equation}
We denote by $\dot{W}_N(t,x)$, for $t > 0$, $x \in \R^2$,
the diffusively rescaled version of $\eta_N$:
\begin{equation} \label{eq:apprwhite}
	\dot{W}_N(t,x) := N \, \eta_N(\lfloor Nt\rfloor, \lfloor \sqrt{N}x\rfloor)  \,.
\end{equation}
For any $N\in\N$, the modified disorder
$\eta_N = (\eta_N(m,z))_{m\in\N, z\in\Z^2}$ is i.i.d.\ with
$\bbE[\eta_N(m,z)]=0$ and
$\bbE[\eta_N(m,z)^2] 
= 1$, see \eqref{eq:disorder}, and higher moments of $\eta_N$
are uniformly bounded (see \cite[eq. (6.7)]{CSZ17a}).
It follows that $\dot{W}_N$ converges in law to the white noise:
\begin{equation}\label{eq:convwhite}
	\dot{W}_N(t,x) \, \overset{\cD}{\Longrightarrow} \,
	\dot{W}(t,x) \,,
\end{equation}
that is $\langle \dot{W}_N, \psi \rangle \overset{d}{\to} \langle \dot{W}, \psi \rangle
\sim \cN(0, \|\psi\|_{L^2}^2)$  as $N\to\infty$,
for $\psi \in C^\infty_c([0,1]\times \R^2)$.

We now consider the product between $\dot{W}_N$ and $U_N(t,x)-1$, i.e.\ the
centered and diffusively rescaled partition function
$Z_N^{\beta_N}(\lfloor Nt\rfloor, \lfloor \sqrt{N}x\rfloor) - 1$,
see \eqref{eq:diffZ}:
\begin{equation} \label{eq:LN}
\begin{split}
	\Xi_N(t,x) 
	& := \dot{W}_N(t,x)  \, \big( U_N(t,x) - 1 \big)\\
	& \,= \beta_N  \,  \dot{W}_N(t,x) \, V_N(t,x)\,,
\end{split}
\end{equation}
where we recall that $V_N(t,x) = \beta_N^{-1}(U_N(t,x) - 1)$ is defined in \eqref{eq:JN}.

We know that $V_N \overset{\cD}{\Longrightarrow} \tilde v$ 
and $\dot{W}_N \overset{\cD}{\Longrightarrow} W$ as $N\to\infty$,
see~\eqref{eq:EW-basic} and~\eqref{eq:convwhite}. Since $\beta_N \to 0$, 
one could expect that $\Xi_N \overset{\cD}{\Longrightarrow} 0$, but \emph{this turns out to be false}.
The point is that $V_N$ and $\dot{W}_N$ only converge as random distributions,
and the product of distributions is not a continuous operation
(it is generally not even defined). The following result shows
that $\Xi_N$ has in fact a non-trivial limit as $N\to\infty$.
We prove it in Section~\ref{sec:singular}
as an application of our Theorem~\ref{th:main}.

\begin{theorem}[White noise from singular product]\label{th:singular}
Let $\beta = \beta_N$ be fixed as in \eqref{eq:sub-critical},
and set $c_{\hat\beta} := (1-\hat\beta^2)^{-1/2}$.
As $N\to\infty$, we have the joint convergence in law:
\begin{equation*}
	(\dot{W}_N, \Xi_N) \,\overset{\cD}{\Longrightarrow}\,
	\Big(\dot{W}, \, \sqrt{c_{\hat\beta}^2-1} \,\dot{W}' \Big) \,,
\end{equation*}
where $\dot{W}$ and $\dot{W}'$ denote two independent white noises on $[0,1]\times \R^2$.
More precisely, for any $\psi \in C^\infty_c([0,1]\times\R^2)$,
the following joint convergence in distribution holds:
\begin{equation*}
	\big( \langle\dot{W}_N,\psi\rangle ,\,
	\langle \Xi_N, \psi\rangle \big) \,\xrightarrow[]{\ d \ }\,
	\cN\big(0, \|\psi\|_{L^2}^2 \, \Sigma_{\hat\beta} \big) \qquad \text{where} \qquad
	\Sigma_{\hat\beta} = \left(\begin{matrix}
	1 & 0 \\
	0 &  c_{\hat\beta}^2-1 
	\end{matrix}\right) \,.
\end{equation*}
\end{theorem}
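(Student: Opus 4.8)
The plan is to test both random fields against a fixed $\psi\in C^\infty_c([0,1]\times\R^2)$, to express the two resulting random variables $\langle\dot W_N,\psi\rangle$ and $\langle\Xi_N,\psi\rangle$ as explicit polynomial chaos in the modified disorder $\eta_N$ of \eqref{eq:eta}, and then to invoke Theorem~\ref{th:main}. Substituting the chaos expansion of the partition function,
\[
  Z_N^{\beta_N}(m,z) = 1 + \sum_{k\ge1}\sigma_N^{k}
  \sumtwo{m<n_1<\cdots<n_k\le N}{x_1,\dots,x_k\in\Z^2}
  \ \prod_{i=1}^k \P\big(S_{n_i}=x_i\,\big|\,S_{n_{i-1}}=x_{i-1}\big)\,\eta_N(n_i,x_i)
\]
(with the convention $(n_0,x_0)=(m,z)$), into \eqref{eq:apprwhite} and \eqref{eq:LN}, and replacing $\langle\cdot,\psi\rangle$ by its Riemann sum over the lattice, I obtain that $\langle\dot W_N,\psi\rangle$ is an \emph{order-one} chaos, with coefficient $\tfrac1N\psi(\tfrac mN,\tfrac z{\sqrt N})$ on the singleton $\{(m,z)\}$, whereas $\langle\Xi_N,\psi\rangle$ is supported on chaos of \emph{order $\ge2$}: since the external factor $\eta_N(m,z)$ sits at a time strictly earlier than every internal factor, no contraction occurs and the monomial indexed by $A=\{(m,z),(n_1,x_1),\dots,(n_k,x_k)\}$ carries the coefficient $\tfrac{\sigma_N^{k}}N\psi(\tfrac mN,\tfrac z{\sqrt N})\prod_{i=1}^k\P(S_{n_i}=x_i\mid S_{n_{i-1}}=x_{i-1})$. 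Because chaos of different orders are orthogonal in $L^2$, this already yields $\bbE[\langle\dot W_N,\psi\rangle\,\langle\Xi_N,\psi\rangle]=0$ \emph{exactly}, for every $N$, which is the source of the vanishing off-diagonal entry of $\Sigma_{\hat\beta}$.

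By the Cram\'er--Wold device, the announced convergence reduces to showing, for arbitrary fixed $a,b\in\R$, that the \emph{single} polynomial chaos $X_N:=a\,\langle\dot W_N,\psi\rangle+b\,\langle\Xi_N,\psi\rangle$ converges to $\cN\big(0,(a^2+b^2(c_{\hat\beta}^2-1))\,\|\psi\|_{L^2}^2\big)$; the same argument applied to a vector of test functions gives the full convergence as random distributions. The variables $\eta_N$ have zero mean, unit variance and uniformly bounded higher moments, so they satisfy \eqref{eq:disordergen}--\eqref{eq:ui}, and I may apply Theorem~\ref{th:main} to $X_N$ once its three hypotheses are verified, the orthogonality above ensuring that the order-one and order-$\ge2$ parts never interfere in the second-moment bookkeeping.

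For the limiting second moment \eqref{hyp1} and for subcriticality \eqref{hyp2} I would compute the order-$(k{+}1)$ contribution of $\bbE[\langle\Xi_N,\psi\rangle^2]$: summing the squared transition kernels over the spatial variables collapses $\prod_i\P(\cdots)^2$ into $\prod_i u_{n_i-n_{i-1}}$, see \eqref{eq:uN}, leaving
\[
  \frac{\sigma_N^{2k}}{N^2}\sum_{m,z}\psi\Big(\tfrac mN,\tfrac z{\sqrt N}\Big)^2
  \sum_{m<n_1<\cdots<n_k\le N}\ \prod_{i=1}^k u_{n_i-n_{i-1}}\,.
\]
The renewal sum is asymptotic to $(R_{N-m})^k\sim(R_N)^k$ --- this is precisely the bookkeeping behind $\bbE[(Z_N^{\beta_N})^2]\to(1-\hat\beta^2)^{-1}$ in \cite{CSZ17b} --- and since $\sigma_N^2R_N\to\hat\beta^2$ by \eqref{eq:sub-critical}--\eqref{eq:eta} while the normalised sum of $\psi^2$ tends to $\|\psi\|_{L^2}^2$, each order contributes $\hat\beta^{2k}\|\psi\|_{L^2}^2$; summing the geometric series over $k\ge1$ gives $(c_{\hat\beta}^2-1)\|\psi\|_{L^2}^2$, which together with the order-one term $a^2\|\psi\|_{L^2}^2$ is \eqref{hyp1}. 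Subcriticality \eqref{hyp2} follows from the uniform estimate $\sigma_N^{2k}(R_{N-m})^k\le(\sigma_N^2R_N)^k\le(\hat\beta')^{2k}$ with some $\hat\beta'\in(\hat\beta,1)$ for $N$ large, so that $\sum_{k>K}$ is dominated by a geometric remainder uniformly in $N$; here $\hat\beta<1$ is essential.

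The heart of the matter, and the step I expect to be the main obstacle, is \emph{spectral localization} \eqref{hyp3b}--\eqref{hyp3a}. I would cut the time axis $\{1,\dots,N\}$ into $M$ consecutive blocks $I_1,\dots,I_M$ of length $N/M$ and set $\bbB_j:=\{(n,x):n\in I_j,\ x\in\Z^2\}$. These are disjoint; a singleton always lies in one block, and a monomial of $\langle\Xi_N,\psi\rangle$ lies in $\bbB_j$ exactly when \emph{all} of its times belong to $I_j$. The decisive point is that confining an entire configuration to one block of length $N/M$ does \emph{not} degrade the renewal asymptotics: because $R_n\sim\frac1\pi\log n$ is \emph{logarithmic}, $R_{N/M}\sim\frac1\pi(\log N-\log M)\sim R_N$ for fixed $M$, and more precisely the block-averaged bound $\frac MN\sum_{s\le N/M}(R_s)^k\sim(R_{N/M})^k\sim(R_N)^k$ (the sum being dominated by $s$ of order $N/M$) shows that the configurations straddling two blocks carry only a vanishing fraction of the second moment. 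Hence $\sum_{j}\sigma_N^2(\bbB_j)\to\sigma^2$, which is \eqref{hyp3b}, while each $\bbB_j$ only collects the starting points it contains, so that $\sigma_N^2(\bbB_j)\lesssim 1/M$ uniformly and \eqref{hyp3a} holds. Making this averaged renewal estimate rigorous --- controlling both the starting points close to the right end of their block and the genuinely block-crossing configurations --- is the technical crux; granting it, Theorem~\ref{th:main} applies to every $X_N$ and the Cram\'er--Wold reduction concludes.
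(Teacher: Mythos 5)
Your proposal follows essentially the same route as the paper's proof: Cram\'er--Wold reduction, explicit polynomial chaos representation of $\mu\,\langle\dot{W}_N,\psi\rangle+\lambda\,\langle\Xi_N,\psi\rangle$ in the disorder $\eta_N$ (with the order-one part carrying $\mu$ and the order-$\ge 2$ part carrying $\lambda$, whence the exact orthogonality and the diagonal covariance), and verification of the three hypotheses of Theorem~\ref{th:main} using the very same boxes $\bbB_j=\big(\tfrac{j-1}{M}N,\tfrac{j}{M}N\big]$, the key point in both arguments being that $R_n\sim\frac1\pi\log n$ makes confinement of a configuration to a single block asymptotically costless. The ``technical crux'' you flag (block-crossing configurations and starting points near the right edge of a block) is resolved in the paper exactly along the lines you sketch, via an auxiliary parameter $H$: restrict to chaos order $k\le H$, starting time $n_1\le(\tfrac{j}{M}-\tfrac1H)N$ and increments at most $N/H^2$, use $R_{N/H^2}\sim R_N$ for fixed $H$, and let $H\to\infty$ after $N\to\infty$.
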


We can finally give a heuristic explanation for equation \eqref{eq:EWmod}. 
One can check that $ Z_N^{\beta_N}(m,z)$ in \eqref{eq:ZN} solves the following
\emph{difference equation}, for $m \le N$ and $z\in\Z^2$:
\begin{equation} \label{eq:she-discrete}
	 Z_N^{\beta_N}(m-1,z) -  Z_N^{\beta_N}(m,z) = \frac{1}{4} \Delta_{\Z^2}
	 Z_N^{\beta_N}(m,z) \,+\, \sigma_N \, \frac{1}{4}
	 \sum_{z' \sim z} \eta_N(m,z')  \, Z_N^{\beta_N}(m,z') \,,
\end{equation}
where  $z' \sim z$  means $z' \in \{z \pm (1,0), z \pm (0,1)\}$ and
$\Delta_{\Z^2}f(z) := \sum_{z' \sim z} \{f(z')-f(z)\}$
denotes the lattice Laplacian (we recall that
$\sigma_N$ and $\eta_N(m,z)$ are defined in \eqref{eq:eta}).

By \eqref{eq:JN} and \eqref{eq:apprwhite},
we can rewrite \eqref{eq:she-discrete} as follows,
for $(t,x) \in ((0,1] \cap \frac{\Z}{N}) \times 
(\R^2 \cap \frac{\Z^2}{\sqrt{N}})$:
\begin{equation}\label{eq:she-discrete2}
	- \partial_t^{(N)} U_N(t,x) = \frac{1}{4} \Delta_x^{(N)} U_N(t, x)
	\,+\, \sigma_N \,
	\frac{1}{4} \sum_{x' \overset{N}{\sim}\, x} \dot{W}_N(t,x') \, U_N(t,x') \, ,
\end{equation}
where $x' \overset{N}{\sim} x$ means $x' \in \{x\pm (\frac{1}{\sqrt{N}},0), x\pm (0,\frac{1}{\sqrt{N}})\}$
and we define the rescaled operators 
\begin{equation*}
\begin{split}
	\partial_t^{(N)} f(t,x) &:= 
	N \big\{ f(t,x)-f(t-\tfrac{1}{N},x) \big\} \,, \\
	\Delta_x^{(N)} f(t,x) &:=
	N \sum_{x' \overset{N}{\sim}\, x}
	\big\{ f(t,x')-f(t,x) \big\} \,.
\end{split}
\end{equation*}
Note that \eqref{eq:she-discrete2} is a discretization of
the (time reversed) Stochastic Heat Equation \eqref{eq:she},
with the factor $\frac{1}{4}$ instead of $\frac{1}{2}$
(see Remark~\ref{rem:var12}) and with $\sigma_N \sim \beta_N$ in place of~$\beta$.

We now consider $V_N(t,x) = \beta_N^{-1}(U_N(t,x)-1)$, see \eqref{eq:cdef}.
By \eqref{eq:she-discrete2} we obtain
\begin{equation}\label{eq:she-discrete-fluct}
	- \partial_t^{(N)}  V_N(t,x) = \frac{1}{4} \Delta_x^{(N)}  V_N(t, x)
	+ \frac{\sigma_N}{\beta_N} \, \frac{1}{4} \sum_{x' \overset{N}{\sim}\, x} 
	\bigg\{ \dot{W}_N(t,x')
	\,+\, \beta_N \, \dot{W}_N(t,x') \, V_N(t,x') \bigg\} \, .
\end{equation}
The last term $\beta_N \, \dot{W}_N(t,x') \, V_N(t,x')$
is nothing but $\Xi_N(t,x')$ in \eqref{eq:LN}, which
formally vanishes as $N\to\infty$ but
actually \emph{converges to an independent white noise
$\sqrt{c_{\hat\beta}^2-1} \, \dot{W}'(t,x)$}, by Theorem~\ref{th:singular}
(note that $x' \overset{N}{\sim} x$ implies $|x'-x| = 1/\sqrt{N} \to 0$).
If we assume that $V_N(t,x)$ converges to a limit $\tilde v(t,x)$,
by taking the formal limit
of \eqref{eq:she-discrete-fluct} we finally obtain
\begin{equation}\label{eq:EWtrue}
	- \partial_t \tilde v(t,x) = \frac{1}{4} \Delta_x \tilde v(t,x) \,+\,
	\dot{W}(t,x) \,+\, \sqrt{c_{\hat\beta}^2-1} \, \dot{W}'(t,x) \,.
\end{equation}
Note that \emph{this is equivalent to \eqref{eq:EWmod}}, because 
$\dot{W}(t,x) \,+\, \sqrt{c_{\hat\beta}^2-1} \, \dot{W}'(t,x)
\,\overset{d}{=}\, c_{\hat\beta} \, \dot{W}(t,x)$.

\smallskip

In conclusion, Theorem~\ref{th:singular} provides an intuitive explanation why the random field
$\tilde v(t,x)$ to which $V_N(t,x)$ converges should satisfy the equation
\eqref{eq:EWmod}, or more precisely \eqref{eq:EWtrue}. The factor $c_{\hat\beta}$ in \eqref{eq:EWmod} 
arises from the
\emph{singular product} $\Xi_N(t,x) = \beta_N \, \dot{W}_N(t,x) \, V_N(t,x)$
which gives rise to an \emph{independent white noise}, by Theorem~\ref{th:singular}. 

This result is the first step toward
a \emph{``robust analysis''} of the two-dimensional SHE \eqref{eq:she},
which would allow for a rigorous derivation of \eqref{eq:EWtrue} 
from~\eqref{eq:she-discrete-fluct}.

\subsection{Main result II (log-normality)}

So far we have discussed the distribution of
the partition function $Z_N^{\beta_N}(m,z)$, suitably rescaled,
as a \emph{random field}, i.e.\ averaging over the starting point $(m,z)$.
We now look at the distribution
of $Z_N^{\beta_N}(m,z)$ for a \emph{fixed} starting point: we fix $(m,z) = (0,0)$
by stationarity and we set
\begin{equation} \label{eq:ZNbetaN}
	Z_N^{\beta_N} := Z_N^{\beta_N}(0,0) \,.
\end{equation}
It was shown
in \cite[Theorem~2.8]{CSZ17b} that $Z_N^{\beta_N}$ is \emph{asymptotically log-normal}:
\begin{equation}\label{eq:log-norm}
\begin{gathered}
	\log Z_N^{\beta_N}
	\,\xrightarrow[]{\ d \ }\, \cN\big(-\tfrac{1}{2} \sigma_{\hat\beta}^2, \, 
	\sigma_{\hat\beta}^2 \big) \qquad
	\text{where} \qquad
	\sigma_{\hat\beta}^2 = \log c_{\hat\beta}^2
	= \log \tfrac{1}{1-\hat\beta^2}  \,.
\end{gathered}
\end{equation}

The original proof of this result, based on the Fourth Moment Theorem,
is long and technical. Our goal is to provide a less technical and more insightful proof,
based on second moment computation, exploiting our Theorem~\ref{th:main}.
The problem is that, unlike for $Z_N^{\beta_N}$, \emph{we do not have a 
polynomial chaos expansion for $\log Z_N^{\beta_N}$},
which is essential for Theorem~\ref{th:main}.
We solve this problem by first proving a result of independent
interest, which shows that $\log Z_N^{\beta_N}$ is sharply approximated in $L^2$
by an explicit polynomial chaos expansion $X_N^{\dom}$.

\smallskip

We need some setup.
We recall that the modified disorder $(\eta_N(n,x))_{n\in\N, x\in\Z^2}$ 
was defined in \eqref{eq:eta}.
We also introduce the transition kernel of the simple random walk:
\begin{equation} \label{eq:qn}
	q_n(x) := \P(S_n = x \,|\, S_0=0) 
\end{equation}
and we recall the polynomial chaos expansion of the partition function \cite{CSZ17a}:
\begin{equation}\label{eq:Zpoly}
\begin{split}
	Z_N^{\beta_N}(m,z) := 1 + \sum_{k=1}^\infty (\sigma_N)^k
	& \sum_{\substack{m = n_0 < n_1 < \ldots < n_k \le N \\
	x_0 := z, \, x_1, \ldots, x_k \in \Z^2}} \
	\prod_{i=1}^k q_{n_i-n_{i-1}}(x_i-x_{i-1}) \, \eta_N(n_i, x_i) \,.
\end{split}
\end{equation}
We define a new polynomial chaos expansion $X_N^{\dom}$,
obtained from the centered partition function $Z_N^{\beta_N} -1 = Z_N^{\beta_N}(0,0)-1$ 
imposing the constraint that \emph{all increments $n_i - n_{i-1}$
for $i \ge 2$ are dominated by the first time $n_1$}:
\begin{equation}\label{eq:XNdom}
\begin{split}
	X_N^{\dom} := \sum_{k=1}^\infty (\sigma_N)^k \!\!\!\!\!\!
	& \sum_{\substack{0 = n_0 < n_1 < \ldots < n_k \le N: \\
	\max\{n_2-n_1, n_3-n_2, \ldots, n_k - n_{k-1}\} \le n_1 \\
	x_0 := 0, \, x_1, \ldots, x_k \in \Z^2}}  \!\!
	\prod_{i=1}^k q_{n_i-n_{i-1}}(x_i-x_{i-1}) \, \eta_N(n_i, x_i) \,.
\end{split}
\end{equation}
Our key approximation result 
shows that \emph{$X_N^\dom$ is a sharp approximation
of $\log Z_N^{\beta_N}$}. The reason why this approximation is possible will be clear
in the proof, but one can already give a look at equation~\eqref{Z2multform},
which shows that a natural approximation of $Z_N^{\beta_N}$ has
a \emph{product structure}, where (a restricted version of) $X_N^{\dom}$ appears.

\begin{theorem}[Polynomial chaos for $\log Z$]\label{th:apprZ}
Set $\beta = \beta_N$ as in \eqref{eq:sub-critical}. Then 
\begin{equation} \label{eq:apprZ}
	\lim_{N\to\infty} \ \big\| \log Z_N^{\beta_N} - \big\{ X_N^{\dom} -
	\tfrac{1}{2} \, \bbE [(X_N^{\dom})^2] \big\} \big\|_{L^2} \,=\, 0 \,.
\end{equation}
\end{theorem}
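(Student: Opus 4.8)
The plan is to prove \eqref{eq:apprZ} by a pure second-moment computation, with no distributional input beyond the known moments of $\log Z_N^{\beta_N}$. Writing $A_N := X_N^{\dom} - \tfrac12\bbE[(X_N^{\dom})^2]$, I expand
\begin{equation*}
	\big\| \log Z_N^{\beta_N} - A_N \big\|_{L^2}^2
	= \bbE\big[(\log Z_N^{\beta_N})^2\big]
	- 2\, \bbE\big[\log Z_N^{\beta_N}\, A_N\big]
	+ \bbE\big[A_N^2\big]
\end{equation*}
and aim to show that all three terms converge to the same constant $\ell := \sigma_{\hat\beta}^2 + \tfrac14\sigma_{\hat\beta}^4$, so that the right-hand side vanishes. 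Since $\bbE[X_N^{\dom}]=0$, one has $\bbE[A_N^2] = \bbE[(X_N^{\dom})^2] + \tfrac14\,\bbE[(X_N^{\dom})^2]^2$, so the problem reduces to $\bbE[(X_N^{\dom})^2]$, the first two moments of $\log Z_N^{\beta_N}$, and the cross term. The moments of $\log Z_N^{\beta_N}$ are supplied by the log-normal limit \eqref{eq:log-norm} of \cite{CSZ17b}, namely $\log Z_N^{\beta_N}\xrightarrow{d}\cN(-\tfrac12\sigma_{\hat\beta}^2,\sigma_{\hat\beta}^2)$, once this convergence in law is upgraded to convergence of the first two moments. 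This upgrade follows from uniform integrability, which holds in the subcritical regime thanks to the boundedness of the positive moments of $Z_N^{\beta_N}$ \cite{LZ21+,CZ21+} together with control on its negative moments (to tame the lower tail of the logarithm); it gives $\bbE[\log Z_N^{\beta_N}]\to-\tfrac12\sigma_{\hat\beta}^2$ and $\bbE[(\log Z_N^{\beta_N})^2]\to\ell$.

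Next I compute the second moment of $X_N^{\dom}$. By orthogonality of the monomials $\eta_N(\cdot)$ and by summing out the spatial variables via $\sum_{y\in\Z^2}q_n(y)^2 = u_n$ (see \eqref{eq:uN}), the expansion \eqref{eq:XNdom} gives the renewal representation
\begin{equation*}
	\bbE\big[(X_N^{\dom})^2\big]
	= \sum_{k\ge1} (\sigma_N^2)^k
	\sum_{\substack{0<n_1<\cdots<n_k\le N \\ g_1 = \max_{1\le i\le k} g_i}}
	\prod_{i=1}^k u_{g_i} \,,
	\qquad g_i := n_i - n_{i-1} \,.
\end{equation*}
The summand $\prod_i u_{g_i}$ and the region $\{\sum_i g_i\le N\}$ are invariant under permutations of the gaps $(g_1,\dots,g_k)$; hence, up to the diagonal contribution of configurations with ties $g_1=g_j$ (which is lower order and negligible, since $\sigma_N^2\sim\hat\beta^2/R_N$ and $u_n\sim\tfrac1{\pi n}$), the domination constraint $g_1=\max_i g_i$ selects exactly a fraction $\tfrac1k$ of the unconstrained $k$-th order sum. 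As the unconstrained sums are precisely the terms of the standard subcritical computation $\bbE[(Z_N^{\beta_N}-1)^2] = \sum_k (\sigma_N^2)^k\sum_{0<n_1<\cdots<n_k\le N}\prod_i u_{g_i}\to\sum_k\hat\beta^{2k}$ (see \cite{CSZ17b}), I obtain
\begin{equation*}
	\bbE\big[(X_N^{\dom})^2\big] \;\longrightarrow\; \sum_{k\ge1}\frac{\hat\beta^{2k}}{k}
	= -\log(1-\hat\beta^2) = \sigma_{\hat\beta}^2 \,,
\end{equation*}
whence $\bbE[A_N^2]\to\ell$. The emergence of the factor $\tfrac1k$ from the domination constraint is exactly what converts the geometric series for $\var(Z_N^{\beta_N})$ into the logarithmic series for $\sigma_{\hat\beta}^2$, and is the conceptual reason behind the definition \eqref{eq:XNdom}.

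The remaining, and hardest, ingredient is the cross term. Since $-\tfrac12\bbE[(X_N^{\dom})^2]\,\bbE[\log Z_N^{\beta_N}]\to\tfrac14\sigma_{\hat\beta}^4$ by the limits above, it suffices to prove $\bbE[\log Z_N^{\beta_N}\,X_N^{\dom}]\to\sigma_{\hat\beta}^2$. Now $X_N^{\dom}$ is obtained from $Z_N^{\beta_N}-1$ by keeping the \emph{same} chaos coefficients restricted to the dominated index set, so orthogonality yields the exact identity $\bbE[(Z_N^{\beta_N}-1)\,X_N^{\dom}]=\bbE[(X_N^{\dom})^2]\to\sigma_{\hat\beta}^2$. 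The task therefore reduces to showing that the nonlinear correction is asymptotically orthogonal to the dominated chaos:
\begin{equation*}
	\bbE\big[\big(\log Z_N^{\beta_N} - (Z_N^{\beta_N}-1)\big)\, X_N^{\dom}\big] \;\longrightarrow\; 0 \,.
\end{equation*}

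The main obstacle is precisely this last limit: a plain Cauchy--Schwarz bound is far too lossy, because $\|\log Z_N^{\beta_N}-(Z_N^{\beta_N}-1)\|_{L^2}$ stays of order one, so the cancellation must be read off from the chaos structure. I would extract the chaos expansion of $\log Z_N^{\beta_N}$ and show that, order by order, its projection onto the dominated configurations reproduces $X_N^{\dom}$ while its projection onto non-dominated configurations is negligible. Concretely, I would either (i) expand $\log(1+Y_N)=\sum_{j\ge1}\frac{(-1)^{j+1}}{j}Y_N^j$ with $Y_N:=Z_N^{\beta_N}-1$, resolve each power $Y_N^j$ into fixed-order chaos through the Wick rule $\eta^2=1+(\eta^2-1)$, and identify the surviving diagrams; or (ii) reduce to Gaussian disorder via the Lindeberg principle invoked after Theorem~\ref{th:main} and use Gaussian integration by parts to express $\bbE[\log Z_N^{\beta_N}\,\eta_N(n,x)]$ through $\bbE[(Z_N^{\beta_N})^{-1}\,\partial_{\eta_N(n,x)}Z_N^{\beta_N}]$, with $\partial_{\eta_N(n,x)}Z_N^{\beta_N}=\sigma_N$ times a partition function carrying a marked weight at $(n,x)$, iterating for the higher coefficients. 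In either route the quantitative heart is the same: controlling the constrained renewal sums generated by the higher powers and showing that every non-dominated diagram carries an extra factor vanishing as $N\to\infty$, by the marginal scaling $\sigma_N^2 R_N\to\hat\beta^2$ together with the asymptotics $u_n\sim\tfrac1{\pi n}$. This is exactly where the domination constraint in \eqref{eq:XNdom} does its work, and where the bulk of the technical renewal estimates of Appendix~\ref{sec:technical} will be needed.
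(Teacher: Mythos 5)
Your reduction of \eqref{eq:apprZ} to three second-moment limits is structurally sound, and two of the three pieces are essentially correct: the symmetry argument giving $\bbE[(X_N^{\dom})^2] \to \sum_{k\ge 1}\hat\beta^{2k}/k = \sigma_{\hat\beta}^2$ (the factor $\tfrac1k$ from the domination constraint, with tied gaps negligible since $\sum_n u_n^2 = O(1) \ll R_N$) is a valid alternative to the paper's Lemma~\ref{th:brackets}, and the identity $\bbE[(Z_N^{\beta_N}-1)\,X_N^{\dom}] = \bbE[(X_N^{\dom})^2]$ is correct by orthogonality, since the chaos coefficients of $X_N^{\dom}$ are exactly those of $Z_N^{\beta_N}-1$ restricted to dominated configurations. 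But there is a genuine gap precisely where you locate it: the limit $\bbE[(\log Z_N^{\beta_N} - (Z_N^{\beta_N}-1))\,X_N^{\dom}] \to 0$ is never proved; you only name two possible routes, and both face concrete obstructions. Route (i) fails as stated because the series $\log(1+Y_N)=\sum_{j\ge1}\frac{(-1)^{j+1}}{j}Y_N^j$ converges only on $\{|Y_N|<1\}$, while $Y_N = Z_N^{\beta_N}-1$ is \emph{not} small in the subcritical regime (its variance tends to $\hat\beta^2/(1-\hat\beta^2)$ and $Z_N^{\beta_N}$ has a limit law with full support on $(0,\infty)$), so the expansion is invalid on an event of non-vanishing probability; even after truncation, controlling $\bbE[Y_N^j X_N^{\dom}]$ order by order is a diagrammatic computation at least as heavy as the fourth-moment proof of \cite{CSZ17b} that this theorem is meant to supersede. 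Route (ii) requires iterated integration by parts against the full chaos $X_N^{\dom}$, not a single $\eta_N(n,x)$, and generates negative powers of $Z_N^{\beta_N}$ whose moments you do not control. Since, given your other two limits, the entire theorem is \emph{equivalent} to this single cross-term cancellation, what you have is a reduction, not a proof.

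There is also a structural objection: you compute $\lim_N \bbE[(\log Z_N^{\beta_N})^2]$ by invoking the asymptotic log-normality \eqref{eq:log-norm} of \cite{CSZ17b}. This is admissible for the literal statement of Theorem~\ref{th:apprZ}, but it defeats the paper's purpose, since Theorems~\ref{th:apprZ} and~\ref{th:XNGauss} are designed precisely to give a new, second-moment proof of \eqref{eq:log-norm}; resting Theorem~\ref{th:apprZ} on that result makes the program circular. The paper avoids both problems by exploiting multiplicative rather than additive structure: it rewrites $Z_N^{\beta_N}$ exactly via record times (Lemma~\ref{th:first-step}), approximates it in $L^2$ by a product $\prod_{j=1}^M (1+X_{N,M}^{\dom}(j))$ of \emph{independent} factors living on separated time scales (Lemma~\ref{th:second-step}), and only then takes logarithms: each factor is small with high probability, so $\log(1+x)\approx x-\tfrac12 x^2$ applies factor by factor, and a law of large numbers replaces $\sum_j X_{N,M}^{\dom}(j)^2$ by $\bbE[(X_N^{\dom})^2]$ (Lemma~\ref{th:third-step}), with convergence in probability upgraded to $L^2$ via the uniform $L^p$ bounds \eqref{eq:boundLp}. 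This coarse-grained factorization is exactly the device that makes the logarithm tractable and the cancellation you are missing automatic; it is the idea absent from your proposal.
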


\noindent
We then show, by our general Theorem~\ref{th:main},
that $X_N^\dom$ is asymptotically Gaussian.

\begin{theorem}[Asymptotic Gaussianity of $X_N^\dom$]\label{th:XNGauss}
Set $\beta = \beta_N$ as in \eqref{eq:sub-critical}. Then 
\begin{equation} \label{eq:XNGauss}
	\lim_{N\to\infty} \bbE \big[(X_N^{\dom})^2\big] = \sigma_{\hat\beta}^2
	= \log \tfrac{1}{1-\hat\beta^2}  
	\qquad \text{and} \qquad
	X_N^{\dom} \xrightarrow[]{\ d \ } \cN\big( 0, \sigma_{\hat\beta}^2 \big)\,.
\end{equation}
\end{theorem}

We prove Theorems~\ref{th:apprZ} and~\ref{th:XNGauss}
in Sections~\ref{sec:apprZ} and~\ref{sec:XNGauss}. Note that
relations \eqref{eq:apprZ} and \eqref{eq:XNGauss} together provide
a strengthening of the asymptotic log-normality of $ Z_N^{\beta_N}$, see
\eqref{eq:log-norm}.

\subsection{Conclusions and perspectives}
\label{sec:perspectives}
We discussed several convergences to a Gaussian limit
for directed polymers: the Edwards-Wilkinson fluctuations
\eqref{eq:cdef} and \eqref{eq:cdef2}, the singular product in Theorem~\ref{th:singular}
and the asymptotic log-normality in Theorem~\ref{th:XNGauss}.
We stress that these results hold in the \emph{sub-critical regime} \eqref{eq:sub-critical}
with $\hat\beta < \hat\beta_c = 1$, while they break down
in the critical regime $\hat\beta = 1$
(note that $c_{\hat\beta} \to \infty$ and $\sigma_{\hat\beta} \to \infty$ as $\hat\beta \uparrow 1$).

It would be  interesting to understand whether these results
can be suitably extended to a ``nearly critical regime'', i.e.\
when one takes $\hat\beta = \hat\beta_N \uparrow 1$ slowly enough, strictly
below the \emph{critical window} $\hat\beta = 1 + O(\frac{1}{\log N})$ studied in
\cite{BC98,GQT21,CSZ19b,CSZ21+}. We plan to investigate this issue
in future work, building on the new proofs that we presented
in this paper, which are more robust and suitable for generalization.

Another direction of research is about higher dimensions $d \ge 3$.
The Edwards-Wilkinson fluctuations \eqref{eq:cdef} and \eqref{eq:cdef2}
have been proved for $d \ge 3$ in the so-called ``$L^2$ regime'' in \cite{LZ20+}
and \cite{CNN20+}, sharpening previous work from \cite{MU18,GRZ18,CCM20,DGRZ20};
see also \cite{CCM21+} for related recent results.
It would be interesting to apply the approach of our paper in this higher dimensional
context, to check whether it is possible to go slightly beyond the ``$L^2$ regime''
(cf.\ the ``nearly critical regime'' mentioned above for~$d=2$).

Finally, we point out that many of the cited works focus on the ``continuum setting''
of the SHE \eqref{eq:she} and KPZ equation \eqref{eq:kpz} where 
the noise $\dot{W}(t,x)$ is mollified (see also Remark~\ref{rem:literature}). 
Our results of this section are formulated in the discrete setting of directed polymers,
which correspond to the stochastic PDEs \eqref{eq:she} and \eqref{eq:kpz}
where the noise $\dot{W}(t,x)$ is \emph{discretized} rather than \emph{mollified}, but
we stress that our approach can
also be applied to the continuum setting with mollification, using Theorem~\ref{th:maincont}
instead of Theorem~\ref{th:main}.

\section{Proofs of Theorem~\ref{th:main}}
\label{sec:proof-poly}

As a preliminary step to prove  Theorem~\ref{th:main},
\emph{we replace the random variables $(\eta^N_t)_{t\in\bbT}$ 
in the definition \eqref{eq:polychaosgen} of $X_N$
by independent standard Gaussians}.
We will show in Subsection~\ref{sec:Lindeberg-principle} that
such a replacement does not affect the asymptotic distribution of $X_N$ as $N\to\infty$.

We therefore assume that $\eta^N_t \sim \cN(0,1)$.
We then exploit the \emph{hypercontractivity of polynomial chaos},
which allows us to bound moments of order $p>2$ in terms of second moments, 
see \cite[Section~3.2]{MOO10} and \cite[Theorem~5.1]{J97}:
\begin{equation}\label{eq:hypercontractivity}
	\forall p > 2: \qquad
	\bbE\bigg[ \bigg| \sum_{A\subset\bbT} q_N(A) \, \eta^N(A) \bigg|^p \bigg]
	\le \bigg( \sum_{A\subset\bbT} (p-1)^{|A|} \, q_N(A)^2 \bigg)^{\frac{p}{2}} \,.
\end{equation}

\begin{remark}\label{rem:hypercontractivity}
The choice of a Gaussian distribution for the $\eta^N_t$'s 
is not fundamental here: hypercontractivity of polynomial chaos 
holds for \emph{arbitrary distributions of the $\eta^N_t$'s with
uniformly bounded moments}:
if $\sup_{N, t} \bbE[|\eta^N_t|^{\bar p}] < \infty$
for some $\overline{p}>p$, then
\begin{equation}\label{eq:hypercontractivity2}
	\bbE\bigg[ \bigg| \sum_{A\subset\bbT} q_N(A) \, \eta^N(A) \bigg|^p \bigg]
	\le \bigg( \sum_{A\subset\bbT} C_p^{|A|} \, q_N(A)^2 \bigg)^{\frac{p}{2}} \,,
\end{equation}
for a suitable $C_p < \infty$ with $\lim_{p\downarrow 2} C_p = 1$: see \cite[Theorem~B.1]{CSZ20}.
\end{remark}

\subsection{Preparation}

We consider a sequence of polynomial chaos $X_N$,
with coefficients $q_N(\cdot)$ as in \eqref{eq:polychaosgen}, which
satisfy assumptions \eqref{hyp:1disc}, \eqref{hyp:2disc}, \eqref{hyp:3disc},
see the equations \eqref{hyp1}-\eqref{hyp3a}. We now build
two suitable diverging sequences of integers $M_N \to \infty$, $K_N \to \infty$.
\begin{itemize}
\item We fix $M_N \to \infty$ slowly enough so that assumption \eqref{hyp:3disc} still
holds with $M = M_N$.
More explicitly, for every $N\in\N$ we can find disjoint subsets (``boxes'') $\bbB_i = \bbB_i^{(N)}$:
\begin{equation*}
	\mathbb{B}_1, \ldots, \mathbb{B}_{M_N} \subset \mathbb{T} \qquad \text{with} \qquad
	\bbB_i \cap \bbB_j = \emptyset \quad \text{for } i\ne j \,,
\end{equation*}
such that the following versions of \eqref{hyp3b}-\eqref{hyp3a} hold:
\begin{equation}\label{eq:hyp3+}
  	\lim_{N \rightarrow \infty}  \ \sum_{i = 1}^{M_N} \, \sigma^2_N (\mathbb{B}_i) \,=\,
  	\sigma^2 \qquad \text{and} \qquad
	\lim_{N \rightarrow \infty} \  \Big\{ \max_{i = 1, \ldots, M_N}  \,
	\sigma^2_N (\mathbb{B}_i) \Big\} \,=\, 0 \,.
\end{equation}

\item By the second relation in \eqref{eq:hyp3+},
we can fix $K_N \to \infty$ slowly enough so that
\begin{equation}\label{eq:KN}
	\lim_{N\to\infty} \ 8^{K_N} \, \max_{i = 1, \ldots, M_N}  \,
	\sigma^2_N (\mathbb{B}_i)  \,=\, 0 \,.
\end{equation}
The reason for this specific choice will be clear later, see the discussion
after \eqref{eq:seethedisc}.
Note that by our assumption \eqref{hyp:2disc}, see \eqref{hyp2}, 
for any $K_N \to \infty$ we have
\begin{equation}\label{eq:hyp2+}
	\lim_{N \to \infty} \ \sum_{\substack{A \subset
    \mathbb{T} \\ |A| > K_N}} q_N (A)^2 = 0 \,.
\end{equation}

\end{itemize}

\begin{remark}\label{M_N}
It is standard 
to deduce \eqref{eq:hyp3+} from \eqref{hyp3b}-\eqref{hyp3a}.
Indeed, given any real sequence $a_{N,M}$ which admits the limits
\begin{equation*}
	\lim_{M \rightarrow \infty} \ \limsup_{N \to \infty} \ a_{N,M} 
	\,=\, \lim_{M \rightarrow \infty} \ \liminf_{N \to \infty} \ a_{N,M} \,=\, \alpha \,,
\end{equation*}
we can always choose $M = M_N\to\infty$ slowly enough so that 
$\lim_{N\to\infty} a_{N,M_N} = \alpha$, as one can check directly.
Then, to obtain \eqref{eq:hyp3+} from \eqref{hyp3b}-\eqref{hyp3a}, it suffices to consider
\begin{equation*}
	a_{N,M} = \sum_{i=1}^M \sigma_N^2 \big( \bbB_i^{(N,M)} \big) \,, \qquad
	\text{resp.} \qquad
	a_{N,M} = \max_{i = 1, \ldots, M} \sigma_N^2\big( \bbB_i^{(N,M)} \big) \,.
\end{equation*}
\end{remark}

We next proceed with the actual proof of Theorem~\ref{th:main}.
We follow the two steps outlined after the statement of Theorem~\ref{th:main}:
\begin{itemize}
\item first we approximate the polynomial chaos $X_N$ in \eqref{eq:polychaosgen}
by a sum of suitable independent random variables, see Subsection~\ref{sec:approx};
\item then we apply the 
Feller-Lindeberg CLT to obtain the asymptotic Gaussianity \eqref{eq:main},
see Subsection~\ref{sec:CLT}.
\end{itemize}

\subsection{Approximation of $X_N$}
\label{sec:approx}
We recall the notation $\eta^N(A) := \prod_{t\in A} \eta^N_t$, see \eqref{eq:polychaosgen}.
We define a triangular array of random variables $(X_{N,i})_{i=1,\ldots, M_N}$ by setting
\begin{equation}\label{eq:XNi}
	X_{N,i} :=  \sum_{\substack{A \subset \mathbb{B}_i \\ 
	|A| \le K_N}} q_N(A) \, \eta^N(A) \qquad \text{for } i=1,\ldots,M_N \,,
\end{equation}
where we recall that $M_N\to\infty$ and $K_N \to \infty$ have been fixed
so that \eqref{eq:hyp3+}-\eqref{eq:hyp2+} hold.

We now show that the sum $\sum_{i=1}^{M_N} X_{N,i}$
is a good approximation of $X_N$.

\begin{lemma}\label{lemmatroncamento}
The following holds:
\begin{equation}\label{lemmanotronc}
	\lim_{N \rightarrow \infty} \ \Bigg\| \, X_{N} 
	\,-\, \sum_{i=1}^{M_N} X_{N,i} \, \Bigg\|_{L^2} \,=\, 0 \,.
\end{equation}
\end{lemma}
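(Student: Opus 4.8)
The plan is to exploit the orthonormality of the monomials $\eta^N(A)$ to turn the $L^2$ distance into a sum of squared coefficients over a controlled set of ``missing'' subsets. Since the $\eta^N_t$ are independent, centered and of unit variance, the family $(\eta^N(A))_{A\subset\bbT}$ is orthogonal in $L^2$ with $\bbE[\eta^N(A)^2]=1$ (as already used for \eqref{eq:moments12}). Comparing the definition \eqref{eq:polychaosgen} of $X_N$ with \eqref{eq:XNi}, and using that the boxes $\bbB_1,\dots,\bbB_{M_N}$ are pairwise disjoint so that no nonempty $A$ is collected twice in $\sum_i X_{N,i}$, I would first write
\[
	\Bigl\| X_N - \sum_{i=1}^{M_N} X_{N,i} \Bigr\|_{L^2}^2 = \sum_{A \in \cD_N} q_N(A)^2 \,,
\]
where $\cD_N$ is precisely the collection of finite nonempty subsets $A \subset \bbT$ appearing in $X_N$ but excluded from every $X_{N,i}$.

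Next I would identify $\cD_N$ explicitly: a subset is excluded either because $|A| > K_N$, or because $A$ is not contained in any single box $\bbB_i$ (it straddles two boxes, or leaves $\bigcup_i \bbB_i$). This yields the bound
\[
	\sum_{A \in \cD_N} q_N(A)^2 \;\le\; \sum_{\substack{A \subset \bbT \\ |A| > K_N}} q_N(A)^2 \;+\; \sum_{\substack{A \subset \bbT \\ A \not\subset \bbB_i \;\forall i}} q_N(A)^2 \;=:\; \mathrm{(I)} + \mathrm{(II)} \,.
\]
Term $\mathrm{(I)}$ vanishes as $N\to\infty$ directly by the subcriticality assumption in the form \eqref{eq:hyp2+}. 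For term $\mathrm{(II)}$, the key observation is that, by disjointness of the boxes, the subsets contained in some $\bbB_i$ form a genuine disjoint union over $i$, so that $\sum_{i=1}^{M_N}\sigma_N^2(\bbB_i) = \sum_{A:\,\exists i,\ A\subset\bbB_i} q_N(A)^2$ (recall \eqref{eq:sigmaB}). Consequently
\[
	\mathrm{(II)} = \sigma_N^2(\bbT) - \sum_{i=1}^{M_N}\sigma_N^2(\bbB_i) \,,
\]
which tends to $\sigma^2 - \sigma^2 = 0$ by the limiting second moment \eqref{hyp1} together with the first relation in \eqref{eq:hyp3+}. Combining the two estimates yields \eqref{lemmanotronc}.

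The argument is essentially bookkeeping, so I do not expect a deep obstacle; the one point requiring care is the exact identification of $\mathrm{(II)}$ with the difference $\sigma_N^2(\bbT) - \sum_{i=1}^{M_N} \sigma_N^2(\bbB_i)$. This relies crucially on the disjointness of the boxes, which guarantees that $\{A : A \subset \bbB_i \text{ for some } i\}$ is a disjoint union (a nonempty $A$ lies in at most one $\bbB_i$), so that each coefficient $q_N(A)^2$ is counted at most once. This is precisely what lets the ``missing mass'' be computed as a difference of two quantities that separately converge to the same limit $\sigma^2$, rather than being estimated directly — and it is the reason assumption \eqref{hyp:3disc} was stated with disjoint boxes in the first place.
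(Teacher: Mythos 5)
Your proposal is correct and is essentially the paper's own argument: the paper reaches the same two quantities — the high-order tail $\sum_{|A|>K_N}q_N(A)^2$, killed by \eqref{eq:hyp2+}, and the ``straddling mass'' $\sigma_N^2(\bbT)-\sum_{i=1}^{M_N}\sigma_N^2(\bbB_i)$, killed by \eqref{hyp1} and the first relation in \eqref{eq:hyp3+} — only it organizes them via the triangle inequality through intermediate variables $\tilde X_{N,i}:=\sum_{A\subset\bbB_i}q_N(A)\,\eta^N(A)$ rather than through your single orthogonal decomposition over the missing index set $\cD_N$. The two write-ups are interchangeable, and your identification of the straddling term as a difference of second moments is exactly the quantity $\Delta_N$ the paper uses later in \eqref{eq:Delta}.
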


\begin{proof}
Let us define a modification of the random variables $X_{N,i}$ in \eqref{eq:XNi},
where we simply remove the constraint $|A| \le K_N$:
\begin{equation*}
	\tilde X_{N,i} :=  \sum_{A \subset \mathbb{B}_i} q_N(A) \, \eta^N(A) 
	\qquad \text{for } i=1,\ldots,M_N \,.
\end{equation*}
We are going to show that
\begin{equation}\label{eq:toshow}
	\lim_{N \rightarrow \infty} \ \Bigg\| \, X_{N} 
	\,-\, \sum_{i=1}^{M_N} \tilde X_{N,i} \, \Bigg\|_{L^2} \,=\, 0 \qquad \text{and} \qquad
	\lim_{N \rightarrow \infty} \ \Bigg\| \, \sum_{i=1}^{M_N} \tilde X_{N,i}
	\,-\, \sum_{i=1}^{M_N} X_{N,i} \, \Bigg\|_{L^2} \,=\, 0 \,.
\end{equation}

The first relation
is a direct consequence of our assumptions \eqref{hyp:1disc} and~\eqref{hyp:3disc}. 
Indeed, since the boxes $\bbB_i$ are disjoint, the random variable
$\sum_{i=1}^{M_N} \tilde X_{N,i}$ is the polynomial chaos where we only sum over subsets
$A \subset \bigcup_{i=1}^{M_N} \mathbb{B}_i$, hence the difference 
$X_{N} - \sum_{i=1}^{M_N} \tilde X_{N,i}$ is orthogonal in $L^2$ to
$\sum_{i=1}^{M_N} \tilde X_{N,i}$. As a consequence,
recalling also \eqref{eq:sigmaB}, we can write
\begin{equation*}
	\Bigg\| X_{N} - \sum_{i=1}^{M_N} \tilde X_{N,i} \Bigg\|_{L^2}^2
	\,=\, \big\| X_N \big\|_{L^2}^2 \,-\,
	\Bigg\| \sum_{i=1}^{M_N} \tilde X_{N,i} \Bigg\|_{L^2}^2
	\,=\, \sum_{A\subset \bbT} q_N(A)^2 \,-\,
	\sum_{i=1}^{M_N} \sigma_N^2(\bbB_i) \,,
\end{equation*}
hence the first relation in \eqref{eq:toshow} follows by
\eqref{hyp1} and the first relation in \eqref{eq:hyp3+}.

The second relation in \eqref{eq:toshow} follows by our assumption \eqref{hyp:2disc},
see \eqref{eq:hyp2+}, because
\begin{equation*}
	\Bigg\| \, \sum_{i=1}^{M_N} \tilde X_{N,i}
	\,-\, \sum_{i=1}^{M_N} X_{N,i} \, \Bigg\|_{L^2}^2
	\,=\, \sum_{i=1}^{M_N} \ \sum_{\substack{A\subset \bbB_i \\
	|A| > K_N}} q_N(A)^2 \,\le\,
	\sum_{\substack{A\subset\bbT\\ |A| > K_N}} q_N(A)^2  \,.
\end{equation*}
This completes the proof.
\end{proof}

\subsection{Asymptotic Gaussianity of $X_N$}
\label{sec:CLT}
In view of Lemma~\ref{lemmatroncamento}, to prove \eqref{eq:main}
it remains to prove the convergence in distribution
\begin{equation}\label{eq:main+}
	\sum_{i=1}^{M_N} X_{N,i} \, \xrightarrow[N \rightarrow \infty]{d} \, \mathcal{N} (0, \sigma^2) \,.
\end{equation}

Note that $(X_{N,i})_{i=1,\ldots, M_N}$
are \emph{independent} random variables with
zero mean and finite variance, see \eqref{eq:XNi},  because the boxes $\bbB_i \subset \bbT$
are disjoint. By the \emph{Central Limit Theorem for triangular arrays} 
\cite[Theorem~27.2]{Bil95}, it suffices to check
the convergence of the variance:
\begin{equation}\label{eq:varianceconv}
	\lim_{N\to\infty} \bbE \Bigg[ \Bigg( \sum_{i=1}^{M_N} X_{N,i} \Bigg)^2 \Bigg] = \sigma^2 \,,
\end{equation}
and the \emph{Lindeberg condition}:
\begin{equation}\label{eq:Lindeberg-condition}
	\forall \epsilon > 0: \qquad
	\lim_{N\to \infty} \ \sum_{i=1}^{M_N} \,
	\bbE \Big[ \big(X_{N,i}\big)^2 \, \ind_{\{|X_{N,i}| > \epsilon\}} \Big] \,=\, 0 \,.
\end{equation}

Relation \eqref{eq:varianceconv} follows by
Lemma~\ref{lemmatroncamento}, see \eqref{lemmanotronc}, and our assumption~\eqref{hyp:1disc},
see~\eqref{hyp1}. Next we are going to prove the following \emph{Lyapunov condition}:
\begin{equation}\label{Ljapunov}
	\text{for some } p >2: \qquad
	\lim_{N\to\infty} \ \sum_{i=1}^{M_N} \,
	\bbE \Big[ \big|   X_{N,i} \big|^p \Big] \,=\, 0 \,,
\end{equation}
which implies Lindeberg's condition \eqref{eq:Lindeberg-condition}
since 
\begin{equation*}
\bbE \big[ \big(X_{N,i}\big)^2 \, \ind_{\{|X_{N,i}| > \epsilon\}} \big]
	\le \bbE \Bigg[ \frac{|X_{N,i}|^p}{|X_{N,i}|^{p-2}} \, \ind_{|X_{N,i}| > \epsilon} \} \Bigg] \le  \,\frac{\bbE \big[ \big| X_{N,i}\big|^p \big]}{\epsilon^{p-2}} \,.
\end{equation*}

\smallskip

To obtain \eqref{Ljapunov}, we apply the hypercontractivity bound 
\eqref{eq:hypercontractivity} to $X_{N,i}$, see \eqref{eq:XNi}, to get
\begin{equation} \label{eq:hyperba}
	\bbE \Big[ \big|   X_{N,i} \big|^p \Big]^{\frac{2}{p}} \,\le\, 
	\sum_{\substack{A \subset \mathbb{B}_i \\ 
	|A| \le K_N}} (p-1)^{|A|} \, q_N(A)^2 
	\,\le\,
	 (p-1)^{K_N} \, \sigma_N^2(\bbB_i) \,,
\end{equation}
where we recall that $\sigma_N^2(\bbB_i) = \sum_{A \subset \mathbb{B}_i} q_N(A)^2$.
Then we  can write, for any $p > 2$,
\begin{equation} \label{eq:seethedisc}
\begin{split}
	\sum_{i=1}^{M_N} \, \bbE \Big[ \big|   X_{N,i} \big|^p \Big]	
	&\,\le\, \bigg( \max_{i=1,\ldots,M_N}  \bbE \Big[ \big|   X_{N,i} \big|^p \Big]
	\bigg)^{1-\frac{2}{p}}
	\ \sum_{i=1}^{M_N} \bbE \Big[ \big|   X_{N,i} \big|^p \Big]^{\frac{2}{p}}  \\
	&\,\le\, \Bigg\{ (p-1)^{p K_N} \,
	\Big( \max_{i=1,\ldots,M_N}  \sigma_N^2(\bbB_i) \Big)^{p-2} \Bigg\}^{\frac{1}{2}}
	\ \sum_{i=1}^{M_N} \, \sigma_N^2(\bbB_i) \,.
\end{split}
\end{equation}
If we fix $p=3$, the term in brackets vanishes as $N\to\infty$
by our choice \eqref{eq:KN} of $K_N$.
The last sum 
converges to $\sigma^2$ as $N\to\infty$,
see \eqref{eq:hyp3+}, hence it is uniformly bounded.
This completes the proof of \eqref{Ljapunov}.

\subsection{Switching to Gaussian random variables}
\label{sec:Lindeberg-principle}

We finally complete the proof of Theorem~\ref{th:main} by justifying the preliminary step:
we show that replacing the random variables $(\eta^N_t)_{t\in\bbT}$  in \eqref{eq:polychaosgen}
by standard Gaussians
\emph{does not change the asymptotic distribution of $X_N$}. More precisely,
if $(\hat\eta_t)_{t\in\bbT}$ are independent $\cN(0,1)$ and we set
\begin{equation}\label{eq:polychaosgen2}
	\hat X_N = \sum_{A \subset \mathbb{T}} q_N (A)  \,\hat\eta (A) \,, \qquad \text{with} \qquad
	\hat\eta (A) := \prod_{t \in A} \hat\eta_t \,,
\end{equation}
it suffices to show that for every bounded and smooth $f: \R \to \R$ we have
\begin{equation}\label{eq:goalconvdist}
	\lim_{N\to\infty} \ \big| \,\bbE[ f(X_N)] \,-\, \bbE[ f(\hat X_N) ] \, \big| \,=\, 0  \,.
\end{equation}
Indeed, since $\hat X_N \overset{d}{\to} \cN(0,\sigma^2)$
by the first part of the proof, \eqref{eq:goalconvdist} implies $X_N \overset{d}{\to} \cN(0,\sigma^2)$.

\smallskip

We exploit the Lindeberg principle \cite[Theorem 2.6]{CSZ17a},
which generalizes \cite{MOO10}, to show that $\bbE[ f(X_N)]$ is close to
$\bbE[ f(\hat X_N)]$. 
Let us fix $f:\R \to \R$ of class $C^3$ with
\begin{equation}
	C_f := \max \{ \| f' \|_{\infty}, \| f'' \|_{\infty}, \| f''' \|_{\infty} \} < \infty \,.
\end{equation}
For $L > 0$, denote by $m_2^{>L}$ the second moment tail of the random
variables $\eta^N_t$ and $\hat\eta_t$:
\begin{equation}\label{unifint}
	m_2^{>L} \,:=\, \sup_{N\in\N, \, t \in \mathbb{T}} \  \max 	\Big\lbrace \bbE \big[|\eta^N_t|^2 \ind_{|\eta^N_t|>L}\big]\,, \, \bbE \big[|\hat\eta_t|^2 \ind_{|\hat\eta_t|>L}\big]\Big\rbrace
	 \,.
\end{equation}
Let $\mathsf{C}_{{X}_N^{\le K }}$, $\mathsf{C}_{{X}_N^{> K }}$ be
the second moments of $X_N$ truncated to chaos of order $\le K$ and $> K$:
\begin{equation}\label{secmomtruncated}
	\mathsf{C}_{{X}_N^{\le K }} \,:=\, \sum_{\substack{A \subset \mathbb{T}\\
				|A| \le K }} q_N(A)^2 \,, \qquad
	\mathsf{C}_{{X}_N^{> K }} \,:=\, \sum_{\substack{A \subset \mathbb{T}\\
				|A| > K }} q_N(A)^2 \,.
\end{equation}
Finally, define the \emph{influence} of the variable $t \in \mathbb{T}$ on $X_N$ 
by\footnote{Note that we can write 
$\text{Inf}_t[X_N] =
\bbE \left[ \mathbb{V}ar \left[ X_N(\eta) | (\eta^N_s)_{s \in \mathbb{T}\setminus {t} }\right]\right]$.}
\begin{equation}\label{eq:influence}
	\text{Inf}_t[X_N] \,:=\, 
	\sum_{\substack{A \subset \mathbb{T}\\ A \ni t }} q_N(A)^2 \,.
\end{equation} 
By \cite[Theorem 2.6]{CSZ17a},
for any $L > 0$ such that $m_2^{>L} \le \frac{1}{4}$
and for every $K \in \N$ we have
\begin{equation}\label{Lindebergprinc}
	\begin{aligned} 
	\big| \bbE [ f(X_N)] - \bbE [ f(\hat X_N)] \big| 
	\,\le\,  C_f \ \bigg\{ & \, 2 \sqrt{\mathsf{C}_{{X}_N^{> K }} } 
	\,+\, 16 K^2 \, \mathsf{C}_{{X}_N^{\le K }} \, m_2^{>L}\\
	& \qquad \,+\, 70^{K +1} \, \mathsf{C}_{{X}_N^{\le K }} \, L^{3K} \, 
	\max_{t\in \mathbb{T}} \, \sqrt{\text{Inf}_t[X_N] }  \,  \bigg\} \,.
	\end{aligned}
\end{equation}

It remains to show that the r.h.s.\ of this expression is small as $N\to\infty$, to prove
\eqref{eq:goalconvdist}.
We fix any $\epsilon > 0$ and we argue as follows:
\begin{itemize}
\item by assumption \eqref{hyp2},
we can choose $K = K_\epsilon$ such that
$\limsup_{N\to\infty} \mathsf{C}_{{X}_N^{> K }} \le \epsilon$;

\item by assumption \eqref{hyp1},
for any $K\in\N$ we can bound
$\limsup_{N\to\infty} \, \mathsf{C}_{{X}_N^{\le K }} \,\le\, \sigma^2$;

\item by assumption \eqref{eq:ui}, we can choose $L = L_\epsilon$ such that
$m_2^{>L_\epsilon} \le \epsilon / (K_\epsilon^2 \, \sigma^2)$;

\item finally, we show below that
\begin{equation}\label{eq:lastclaim}
	\limsup_{N\to\infty} \ \max_{t\in \mathbb{T}} \, \sqrt{\text{Inf}_t[X_N] } \,=\, 0\,.
\end{equation}
\end{itemize}
As a consequence, when we plug $K=K_\epsilon$ and $L=L_\epsilon$ in \eqref{Lindebergprinc}
and we let $N\to\infty$, we get
\begin{equation*}
\begin{split}
	\limsup_{N\to\infty} \ \big| \bbE [ f(X_N)] - \bbE [ f(\hat X_N)] \big| 
	\,\le\,  C_f \, \big\{ \, 2\,\sqrt{\epsilon} \,+\, 16 \, \epsilon \big\} \,,
\end{split}
\end{equation*}
from which \eqref{eq:goalconvdist} follows because $\epsilon > 0$ is arbitrary.

\smallskip

It only remains to prove \eqref{eq:lastclaim}.
By assumption there are disjoint boxes $\bbB_1, \ldots, \bbB_{M_N} \subset \bbT$,
with $M_N\to\infty$, such that 
relation \eqref{eq:hyp3+} holds. In particular, recalling also \eqref{eq:sigmaB} and  \eqref{hyp1},
it follows that \emph{subsets $A \subset \bbT$
not contained in any of the boxes $\bbB_i$ give a negligible contribution}:
\begin{equation}\label{eq:Delta}
	\Delta_N \,:=\, \sum_{\substack{A \subset 	\mathbb{T}:\\
	A \nsubset \bbB_i \, \forall i=1,\ldots,M_N}} q_N(A)^2
	\,=\,\sigma_N^2(\bbT) \,-\,
	\sum_{i=1}^{M_N} \sigma_N^2(\bbB_i)
	\,\xrightarrow[N\to\infty]{}\, 0 \,.
\end{equation}
Recall now the definition \eqref{eq:influence} of $\text{Inf}_t[X_N]$.
Fix $t\in\bbT$ and a subset $A \subset \bbT$ which contains~$t$,
i.e.\ $A \ni t$. We distinguish two cases:
\begin{itemize}
\item if $t \not\in \bbB_i$ for all $i=1,\ldots, M_N$, then
$A \ni t$ implies $A \nsubset \bbB_i$ for all
$i=1,\ldots, M_N$, hence by \eqref{eq:Delta}
we can bound $\text{Inf}_t[X_N] \le \Delta_N$;

\item if $t \in \bbB_j$ for some (necessarily unique) $j=1,\ldots, M_N$,
then $A \ni t$ implies that either $A \subset \bbB_j$, or 
$A \nsubset \bbB_i$ for all $i=1,\ldots, M_N$ (we cannot have
$A \subset \bbB_i$ for some $i\ne j$), hence by \eqref{eq:sigmaB} and
\eqref{eq:Delta} we can bound
$\text{Inf}_t[X_N] \le \sigma_N^2(\bbB_j) + \Delta_N$.
\end{itemize}
It follows that
\begin{equation*}
	\max_{t\in\bbT} \ \text{Inf}_t[X_N] \,\le\,
	\max_{j=1,\ldots, M_N} \ \sigma_N^2(\bbB_j) \,+\, \Delta_N \,,
\end{equation*}
hence \eqref{eq:lastclaim} follows by \eqref{eq:hyp3+} and \eqref{eq:Delta}.
The proof of Theorem~\ref{th:main} is complete.\qed

\section{Proof of Theorem~\ref{th:singular}}
\label{sec:singular}

\subsection{Preparation}
We need to show that
\begin{equation*}
	(\dot{W}_N, \Xi_N) \,\overset{\cD}{\Longrightarrow}\,
	\Big(\dot{W}, \, \sqrt{c_{\hat\beta}^2-1} \,\dot{W}' \Big) \,,
\end{equation*}
that is, for any fixed $\psi \in C^\infty_c([0,1]\times\R^2)$ we have
\begin{equation}\label{tesiL}
	\big( \langle\dot{W}_N,\psi\rangle ,\,
	\langle \Xi_N, \psi\rangle \big) \,\xrightarrow[]{\ d \ }\,
	\cN\big(0, \|\psi\|_{L^2}^2 \, \Sigma_{\hat\beta} \big) \qquad \text{where} \qquad
	\Sigma_{\hat\beta} = \left(\begin{matrix}
		1 & 0 \\
		0 &  c_{\hat\beta}^2-1 
	\end{matrix}\right) \,.
\end{equation}
By the Cram\'er-Wold device \cite[Theorem~29.4]{Bil95}, 
it suffices to show that for all $\lambda,\mu\in\R$
\begin{equation}\label{goalL}
	X_N := \mu \, \langle \dot{W}_N,\psi \rangle 
	+ \lambda \, \langle \Xi_N,\psi \rangle \,\xrightarrow[]{\ d \ }\, 
	\cN \Big( 0,
	\sigma^2 := \, \| \psi\|_{L^2}^2 \big(\mu^2  + \lambda^2\, (c^2_{\hat{\beta}}-1) \big) \, \Big) \,.
\end{equation}
To this purpose we are going to apply Theorem~\ref{th:main}.

Recall the definitions \eqref{eq:apprwhite} and \eqref{eq:LN}
of $\dot{W}_N$ and $\Xi_N$ (see also \eqref{eq:JN}), we can write
\begin{equation}
\begin{split} \label{integralC}
	X_N &= 
	N \, \int\limits_{(0,1] \times\R^2}
	\psi(t,x) \, \eta_N\big(\lfloor Nt \rfloor, \lfloor \sqrt{N}x \rfloor\big)  \,
	\Big\{ \mu + \lambda \big(Z_N^{\beta_N}(\lfloor Nt \rfloor, \lfloor \sqrt{N}x \rfloor)-1\big) \Big\}\, 
	\dd t \, \dd x \\
	&= 
	\frac{1}{N} \, \int\limits_{(0,N] \times\R^2}
	\psi\big(\tfrac{t}{N}, \tfrac{x}{\sqrt{N}}\big) \, \eta_N\big(\lfloor t \rfloor, \lfloor x \rfloor\big)  \, 
	\Big\{ \mu + \lambda \big(Z_N^{\beta_N}(\lfloor t \rfloor, \lfloor x \rfloor)-1\big) \Big\} \, 	
	\dd t \, \dd x \,.
\end{split}
\end{equation}
Let us define $\overline{\psi}_N : \N \times \Z^2 \to \R$ as the average of 
$\psi \big(\frac{\cdot}{N},\frac{\cdot}{\sqrt{N}} \big)$ over cubes:
\begin{equation} \label{eq:psibar}
	\overline{\psi}_N (n,z):= 
	\int\limits_{(n-1,n] \times\{ (z_1-1, z_1] \times ( z_2-1, z_2]\}} 
	\psi \big(\tfrac{t}{N},\tfrac{x}{\sqrt{N}} \big) \, \dd t \, \dd x \qquad
	\text{for} \quad (n,z) \in \N\times\Z^2 \,.
\end{equation}
Recalling the polynomial chaos expansion \eqref{eq:Zpoly} of $Z_N^{\beta_N}(m,z)$, we 
can rewrite $X_N$ as follows:
\begin{equation*}
\begin{split}
	X_N = \frac{1}{N} \sum_{n_0=1}^N & \, \sum_{x_0 \in \Z^2} \,
	\overline{\psi}_N (n_0,x_0) \, \eta_N(n_0,x_0)  \\
	& \quad \Bigg\{ \mu \, + \,
	\lambda \,\sum_{k=1}^\infty \, (\sigma_N)^k \sum_{\substack{n_0<n_1<\ldots<n_k\le N\\ 
	x_0,x_1,\ldots,x_k \in \Z^2}} \,
	\prod_{j=1}^k \, q_{n_j-n_{j-1}}(x_j-x_{j-1}) \,\eta_N(n_j,x_j) \Bigg\} \,.
\end{split}
\end{equation*}
Renaming $(n_0, \ldots, n_k)$ as $(n_1, \ldots, n_{k+1})$
and similarly $(x_0, \ldots, x_k)$ as $(x_1, \ldots, x_{k+1})$, and subsequently
renaming $k+1$ as $k$, we obtain the compact expression
\begin{gather} \label{eq:ix}
	X_N = \frac{1}{N} \sum_{k=1}^\infty \, (\sigma_N)^{k-1}
	\, \sum_{\substack{0<n_1<\ldots<n_k\le N\\ 
	x_1,\ldots,x_k \in \Z^2}} \, f_N(n_1,x_1, \ldots, n_k,x_k) \,
	\prod_{j=1}^k \eta_N(n_j,x_j) \,,
\end{gather}
where we set
\begin{equation} \label{eq:effe}
	f_N(n_1,x_1, \ldots, n_k,x_k) \,:=\,
	\big\{ \mu \, \ind_{\{k=1\}} + \lambda \, \ind_{\{k\ge 2\}} \big\} \,
	\overline{\psi}_N (n_1,x_1) \,
	\prod_{j=2}^k  q_{n_j-n_{j-1}}(x_j-x_{j-1})  \,.
\end{equation}

In conclusion, we can write $X_N = \sum_{A \subset \bbT} q_N(A) \, \eta^N(A)$
as in \eqref{eq:polychaosgen}-\eqref{eq:polychaosgen-k},
with the following correspondences:
\begin{itemize}
	\item the index set is $\bbT := \N \times \Z^2$; 
	\item the random variables $\eta^N_t = \eta_N(m,z)$, for $t = (m,z) \in \bbT$,
	are defined in \eqref{eq:eta}: they satisfy 
	\eqref{eq:disordergen} by construction, while they satisfy \eqref{eq:ui}
	because $\sup_N \bbE[|\eta_N(m,z)|^{p}] < \infty$
for all $p < \infty$ by \eqref{eq:disorder}
(see \cite[eq. (6.7)]{CSZ17a});
	\item the kernel $q_N(A)$,
	for $A:= \{t_1,\ldots,t_{k}\}=\{(n_1,x_1),\ldots,(n_{k}, x_{k})\} \subseteq \bbT$,
	is
\begin{equation*}
	\begin{split}
		q_N(A) = \frac{1}{N} \, (\sigma_N)^{k-1} \, 
		f_N(n_1,x_1, \ldots, n_k,x_k) \, \ind_{ \{0<n_1<\ldots<n_k \le N\} }  \,.
	\end{split}
\end{equation*}
\end{itemize}
By Theorem~\ref{th:main}, to prove
$X_N \overset{d}{\to} \cN(0,\sigma^2)$ as in \eqref{goalL}, we check the following conditions.
\begin{enumerate}
	\item\label{it:sing1} \emph{Limiting second moment:}
	we need to prove that $\lim_{N\to\infty} \bbE[X_N^2] = \sigma^2$.

	\item\label{it:sing2} \emph{Subcriticality}:
	we need to show that
\begin{equation} \label{eq:fromwhich0}
		\lim_{K\to \infty} \ \limsup_{N \rightarrow \infty}  \
		\sum_{\substack{A \subset \mathbb{T}\\ |A| > K}}q_N(A)^2=0 \,.
\end{equation}

	\item\label{it:sing3}
	\emph{Spectral localization}: for any $M,N \in \N$ 
	we define the disjoint subsets
	\[ \bbB_j := \big( \tfrac{j-1}{M}N, \tfrac{j}{M}N \big]  \, \times \, \Z^2 \qquad
	\text{for } j=1,\ldots, M \,,
	\]
	and, recalling that $\sigma_N^2(\bbB_j) := \sum_{A \subset \bbB_j} q_N(A)^2$,
	we need to show that
\begin{equation} \label{eq:needto}
\begin{split}
	\lim_{M\to\infty} \ \sum_{j = 1}^M \
	\lim_{N \rightarrow \infty}  \ \sigma^2_N (\mathbb{B}_j) \,=\,
	\sigma^2 \qquad \text{and} \qquad
	\lim_{M \rightarrow \infty} \ \Big\{ \max_{j =
		1, \ldots, M} \ \limsup_{N \rightarrow \infty} \  
		\sigma^2_N (\mathbb{B}_j) \Big\} \,=\, 0 \,.
\end{split}
\end{equation}
\end{enumerate}

\smallskip
\subsection{Proof of \eqref{it:sing2}.}
We need to prove \eqref{eq:fromwhich0}.
For $K \ge 1$ we can write, by \eqref{eq:ix}-\eqref{eq:effe},
\begin{equation} \label{eq:sumsquares}
	\sum_{\substack{A \subset \mathbb{T}\\ |A| > K}}q_N(A)^2
	= \frac{\lambda^2}{N^2} \sum_{k> K} (\sigma_N^2)^{k-1}
	\sum_{\substack{0<n_1<\ldots<n_k\le N\\ 
	x_1,\ldots,x_k \in \Z^2}} 
	\overline{\psi}_N (n_1,x_1)^2 \,
	\prod_{j=2}^k  q_{n_j-n_{j-1}}(x_j-x_{j-1})^2 .
\end{equation}
We can enlarge the 
sums to $0 < m_j := n_j-n_{j-1}\le N$
and change variables $y_j := x_j - x_{j-1}$,
for $j=2,\ldots, k$, to get the upper bound
\begin{equation} \label{eq:arg1}
\begin{split}
	\sum_{\substack{A \subset \mathbb{T}\\ |A| > K}}q_N(A)^2
	& \le \frac{\lambda^2}{N^2} \sum_{k> K} (\sigma_N^2)^{k-1}
	\sum_{\substack{0<n_1\le N \\ x_1 \in \Z^2}} \,
	\overline{\psi}_N (n_1,x_1)^2 \,
	\prod_{j=2}^k \Bigg\{ \sum_{\substack{0 < m_j \le N \\ y_j \in \Z^2}}
	\ q_{m_j}(y_j)^2 \Bigg\} \\
	& =  \lambda^2 \, \Bigg\{
	\frac{1}{N^2} \sum_{\substack{0<n_1\le N \\ x_1 \in \Z^2}} \,
	\overline{\psi}_N (n_1,x_1)^2 \Bigg\} 
	 \, \frac{(\sigma_N^2 \, R_N)^{K}}{1-\sigma_N^2 \, R_N} \,,
\end{split}
\end{equation}
where we used $\sum_{0 < m \le N} \sum_{y\in\Z^2} q_m(y)^2
= \sum_{0 < m \le N} u_m = R_N$, see \eqref{eq:uN}-\eqref{eq:RN}, and we remark that $\sigma_N^2R_N<1$ for $N$ large enough, because
$\sigma_N^2 \sim \hat\beta^2 / R_N$, see \eqref{eq:sub-critical}, and $\hat{\beta}<1$. Then, by Riemann sum approximation, from \eqref{eq:psibar} we get
\begin{equation} \label{eq:arg2}
	\limsup_{N\to\infty} \sum_{\substack{A \subset \mathbb{T}\\ |A| > K}}q_N(A)^2
	\,\le\, \lambda^2 \, \bigg\{ \int_{[0,1]\times\R^2} \psi(t,x)^2 \, \dd t \, \dd x
	\bigg\} \, \frac{(\hat\beta^2)^K}{1-\hat\beta^2} 
	\,=\, \lambda^2 \, \|\psi\|_{L^2}^2
	\, \frac{(\hat\beta^2)^K}{1-\hat\beta^2} \,,
\end{equation}
from which \eqref{eq:fromwhich0} follows.

\smallskip

\subsection{Proof of \eqref{it:sing1} and \eqref{it:sing3}}
We are going to show that
for all $M\in\N$ and $j \in \{1,\ldots, M\}$
\begin{equation} \label{isecmom}
	\lim_{N \to \infty} \sigma_N^2(\bbB_j) = \big( \mu^2 \,+\,
	\lambda^2 (c^2_{\hat{\beta}}-1) \big) \, 
	\int_{(\frac{j-1}{M},\frac{j}{M}]\times\R^2} 
	\psi(t,x)^2 \, \dd t \, \dd x\,.
\end{equation}
Note that this proves \eqref{eq:needto} and also (for $j=M=1$)
$\lim_{N\to\infty} \bbE[X_N^2] = \sigma^2$, see \eqref{goalL}.

To compute $\sigma_N^2(\bbB_j) := \sum_{A \subset \bbB_j} q_N(A)^2$
we first consider the contribution of sets $A \subset \bbB_j$
with $|A| = 1$, that is $A = \{(n_1,x_1)\}$. Since $f_N(n_1,x_1) = \mu \,
\overline{\psi}_N(n_1,x_1)$, see \eqref{eq:effe}, we get
\begin{equation*}
	\sum_{A \subset \bbB_j ,\, |A|=1} q_N(A)^2
	= \frac{\mu^2}{N^2} \, 
	\sum_{\substack{\frac{j-1}{M}N<n_1\le\frac{j}{M} N \\ x_1 \in \Z^2}} \,
	\overline{\psi}_N (n_1,x_1)^2 \,\xrightarrow[]{\,N\to\infty\,}\,
	\mu^2 \, \int_{(\frac{j-1}{M},\frac{j}{M}]\times\R^2} 
	\psi(t,x)^2 \, \dd t \, \dd x \,,
\end{equation*}
by Riemann sum approximation. Note that this matches with the first term in \eqref{isecmom}.

We next focus on sets $A \subset \bbB_j$
with $|A| > 1$. Note that $\sum_{A \subset \bbB_j ,\, |A|>1} q_N(A)^2$
is given by \eqref{eq:sumsquares} with $K=1$ and with the sum restricted
to $\frac{j-1}{M}N<n_1 < \ldots < n_k \le\frac{j}{M} N$.
Then, arguing as in \eqref{eq:arg1}, we obtain an analogue of \eqref{eq:arg2}:
\begin{equation*}
	\limsup_{N\to\infty} \sum_{A \subset \bbB_j ,\, |A|>1} q_N(A)^2
	\,\le\, \lambda^2 \, \Bigg\{ \int_{(\frac{j-1}{M},\frac{j}{M}]\times\R^2} 
	\psi(t,x)^2 \, \dd t \, \dd x
	\Bigg\} \, \frac{\hat\beta^2}{1-\hat\beta^2}  \,,
\end{equation*}
which agrees with the second term in \eqref{isecmom}
because $\frac{\hat\beta^2}{1-\hat\beta^2} = c_{\hat\beta}^2-1$,
see \eqref{eq:cdef}. To complete the proof, it suffices to prove a matching lower bound, that is
\begin{equation} \label{eq:matching}
	\liminf_{N\to\infty} \sum_{A \subset \bbB_j ,\, |A|>1} q_N(A)^2
	\,\ge\, \lambda^2 \, \Bigg\{ \int_{(\frac{j-1}{M},\frac{j}{M}]\times\R^2} 
	\psi(t,x)^2 \, \dd t \, \dd x
	\Bigg\} \, \frac{\hat\beta^2}{1-\hat\beta^2}  \,.
\end{equation}

Let us fix $H \in \N$ large, such that $\frac{1}{H}<\frac{1}{M}$.
Starting from the expression \eqref{eq:sumsquares} for $K=1$ and 
with $\frac{j-1}{M}N<n_1 < \ldots < n_k \le\frac{j}{M} N$,
we get a lower bound 
by the following restrictions:
\begin{equation*}
	1 < k \le H \,, \qquad \tfrac{j-1}{M}N < n_1 \le \big(\tfrac{j}{M} - \tfrac{1}{H}\big)N \,, \qquad
	0 < n_j - n_{j-1} \le \tfrac{1}{H^2} N \quad \forall j=2,\ldots, k \,,
\end{equation*}
which ensure that $n_k \le n_1 + \sum_{j=2}^k (n_j-n_{j-1}) \le 
(\frac{j}{M} - \frac{1}{H})N + H  \frac{1}{H^2} N \le \frac{j}{M} N$
as required.
Then, similarly to \eqref{eq:arg1}, we get the following lower bound
on $\sum_{A \subset \bbB_j ,\, |A|>1} q_N(A)^2$:
\begin{equation} \label{eq:arglast}
\begin{split}
	& \frac{\lambda^2}{N^2} \sum_{k=2}^H (\sigma_N^2)^{k-1}
	\sum_{\substack{\frac{j-1}{M}<n_1\le(\frac{j}{M}-\frac{1}{H}) N \\ x_1 \in \Z^2}} \,
	\overline{\psi}_N (n_1,x_1)^2 \,
	\prod_{j=2}^k \Bigg\{ \sum_{\substack{0 < m_j \le \frac{1}{H^2}N \\ y_j \in \Z^2}}
	\ q_{m_j}(y_j)^2 \Bigg\} \\
	& \qquad =   \Bigg\{
	\frac{\lambda^2}{N^2} 
	\sum_{\substack{\frac{j-1}{M}<n_1\le(\frac{j}{M}-\frac{1}{H}) N \\ x_1 \in \Z^2}} \,
	\overline{\psi}_N (n_1,x_1)^2 \Bigg\} 
	 \, \frac{\sigma_N^2 \, R_{N/H^2} - (\sigma_N^2 \, R_{N/H^2})^{H}}{1-\sigma_N^2 
	 \, R_{N/H^2}} \,,
\end{split}
\end{equation}
where we recall that $\sum_{k=2}^H x^{k-1} = \frac{x-x^H}{1-x}$ for $|x|<1$.
Since $R_{N/H^2} \sim R_N$ for fixed $H \in \N$, we have shown that
\begin{equation*}
	\liminf_{N \to \infty} \sum_{A \subset \bbB_j ,\, |A|>1} q_N(A)^2 
	\ge  
	\lambda^2 \, \bigg\{ \int_{(\frac{j-1}{M},\frac{j}{M}-\frac{1}{H}]\times\R^2} 
	\psi(t,x)^2 \, \dd t \, \dd x
	\bigg\} \, \frac{\hat\beta^2 - (\hat\beta^2)^H}{1-\hat\beta^2} \,.
\end{equation*} 
We can finally take the limit $H\to\infty$ to see that \eqref{eq:matching} holds.\qed

\section{Proof of Theorem~\ref{th:apprZ}} 
\label{sec:apprZ}

The proof is organised in four parts: we give different approximations
of the partition function $Z_N^{\beta_N}$ and of its logarithm, which will lead us to the proof
of our goal~\eqref{eq:apprZ}. Let us present a general overview of the strategy.

\medskip
\noindent
\emph{Part 1 (record times).}
Let us define a ``constrained version''
$X_{N,[a,b;b']}^{\dom}(x,z;z')$ of $X_N^{\dom}$ from \eqref{eq:XNdom},
where we fix  $(n_0, n_1; n_k) = (a, b; b')$ and 
$(x_0, x_1; x_k) = (x,z; z')$:
\begin{equation}\label{XNdom}
\begin{split}
	& X_{N,[a,b;b']}^{\dom}(x,z;z') \,:=\, \sum_{k=1}^\infty (\sigma_N)^k
	\, q_{b-a}(z-x) \, \eta_N(b,z) \times \\
	& \qquad\qquad \times \sum_{\substack{b =: n_1 < n_2 < \ldots < n_{k-1} < n_k  = : b'\\
	\max\{ n_2-n_1, \ldots, n_k - n_{k-1} \} \le b}}
	\ \sum_{\substack{x_1 = z, \, x_k = z',\\
	x_2, \ldots, x_{k-1} \in \Z^2}} \
	\prod_{i=2}^k q_{n_i-n_{i-1}}(x_i-x_{i-1}) \, \eta_N(n_i, x_i) \,.
\end{split}
\end{equation}
(Note that if $b=b'$ only the terms $k=1$ contributes to the sum ---
and we must have $z=z'$, otherwise the sum vanishes ---
while if $b < b'$ only the terms $k \ge 2$ give a contribution.)

\smallskip

We first show that the partition function $Z_{N}^{\beta_N}$ in \eqref{eq:Zpoly}
can be written as a concatenation of
products of $X_{N,[a,b;b']}^{\dom}(x,z;z')$'s corresponding to suitable
\emph{record times}, see Figure~\ref{fig1}.
The next result is proved in subsection~\ref{sec:proof-first-step}.

\begin{lemma}[Record times]\label{th:first-step}
The following equality holds, with $(b_0',z_0') := (0,0)$:
\begin{equation}\label{ZconXdom}
\begin{split}
	Z_{N}^{\beta_N} & \,=\, 1 + \sum_{\ell=1}^\infty \
	\sum_{\substack{0 < b_1 \le b'_1 < \ldots < b_\ell \le b'_\ell \le N:\\
	b_i - b'_{i-1} > b_{i-1} \, \forall i=2,\ldots, \ell}}
	\ \sum_{\underline{z}, \underline{z}' \in (\Z^2)^\ell} \
	\prod_{i=1}^\ell X_{N,[b'_{i-1},b_i;b_i']}^{\dom}(z'_{i-1},z_i;z'_i) \,,
\end{split}
\end{equation}
where we use the shortcuts
$\underline{z} = (z_1, \ldots, z_\ell)$ and $\underline{z}' = (z'_1, \ldots, z'_\ell)$.
\end{lemma}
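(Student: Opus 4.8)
The plan is to prove \eqref{ZconXdom} by exhibiting an explicit bijection between the index set of the chaos expansion \eqref{eq:Zpoly} of $Z_N^{\beta_N} = Z_N^{\beta_N}(0,0)$ and the collection of ``record structures'' parametrised on the right-hand side by $\ell$, the boundary times $b_i \le b_i'$, and the boundary positions $z_i, z_i'$. Concretely, I would take a generic term of \eqref{eq:Zpoly} with $m=z=0$, indexed by an increasing sequence of times $0 = n_0 < n_1 < \dots < n_k \le N$ and positions $x_0 = 0, x_1, \dots, x_k \in \Z^2$, and decompose it greedily into consecutive blocks according to the size of the increments. The factorisation of the weights over disjoint time intervals will then let me recognise the contribution of each block as one of the constrained quantities $X_{N,[a,b;b']}^{\dom}$ of \eqref{XNdom}.

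The decomposition rule I would use is the following. Open the first block at $b_1 := n_1$; while scanning the times left to right, keep a running ``block start'' equal to the first time $b_i$ of the current block, and absorb the next time $n_j$ into the current block as long as $n_j - n_{j-1} \le b_i$; as soon as an increment $n_j - n_{j-1} > b_i$ is met, close the current block at $b_i' := n_{j-1}$ and open a new block at $b_{i+1} := n_j$. This produces, for each term, a unique sequence of record times $0 < b_1 \le b_1' < \dots < b_\ell \le b_\ell' \le N$. By construction every within-block increment satisfies $n_j - n_{j-1} \le b_i$, which is exactly the domination constraint $\max\{n_2 - n_1, \dots\} \le b$ in \eqref{XNdom}, and every block-opening jump satisfies $b_i - b_{i-1}' > b_{i-1}$, which is the separation constraint in \eqref{ZconXdom}. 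Conversely, any admissible record structure, once concatenated into a single time-space sequence, is mapped back to itself by this greedy rule: within a block no increment exceeds $b_i$, so no block is split, while each inter-block jump exceeds $b_{i-1}$ (hence exceeds the relevant block start), so each boundary is detected. Since the rule is deterministic, this yields the desired bijection; the constant term $1$ (resp.\ the empty product) corresponds to $k=0$ (resp.\ $\ell=0$).

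With the bijection in hand, I would observe that the weight of a term of \eqref{eq:Zpoly}, namely $(\sigma_N)^k \prod_{i=1}^k q_{n_i - n_{i-1}}(x_i - x_{i-1}) \, \eta_N(n_i, x_i)$, factorises over the blocks: writing $k_i$ for the number of jumps in block $i$, so that $k = \sum_{i=1}^\ell k_i$, the factor $(\sigma_N)^k = \prod_i (\sigma_N)^{k_i}$ splits, the kernel attached to the block-opening jump is $q_{b_i - b_{i-1}'}(z_i - z_{i-1}')$ with $(b_0', z_0') = (0,0)$, and the remaining kernels and disorder variables of block $i$ involve only times in $(b_{i-1}', b_i']$, which are disjoint across $i$ by $b_i' < b_{i+1}$. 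Summing such a term over all internal times and positions of a fixed block, with the boundary data $(b_{i-1}', b_i; b_i')$ and $(z_{i-1}', z_i; z_i')$ frozen, reconstructs precisely \eqref{XNdom} for $X_{N,[b_{i-1}',b_i;b_i']}^{\dom}(z_{i-1}', z_i; z_i')$. Grouping the full sum \eqref{eq:Zpoly} according to the record skeleton and summing freely over $\underline{z}, \underline{z}' \in (\Z^2)^\ell$ then gives \eqref{ZconXdom}.

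The main obstacle I anticipate is the careful verification of the bijection, not the routine weight factorisation. One must check that the greedy rule reproduces a given admissible structure without merging or splitting blocks --- this rests on the complementary roles of the within-block bound $\le b_i$ and the separation bound $> b_{i-1}$, which together exhaust all increments --- and one must treat correctly the degenerate single-jump blocks, where $b_i = b_i'$ forces $z_i = z_i'$, consistently with the $k=1$ term of \eqref{XNdom}, as well as the endpoints $b_1 \ge 1$ and $b_\ell' \le N$. Once these combinatorial points are pinned down, \eqref{ZconXdom} follows by reorganising the absolutely convergent sum \eqref{eq:Zpoly}.
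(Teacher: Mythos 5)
Your proof is correct and follows essentially the same route as the paper: your greedy block decomposition (close a block as soon as an increment exceeds the current block start) is precisely the paper's definition of record times $j_{r+1} := \min\{i > j_r : \, n_i - n_{i-1} > n_{j_r}\}$, and the weight factorisation over blocks then yields \eqref{ZconXdom}. Your additional verification of the inverse map and of the degenerate single-jump blocks $b_i = b_i'$ merely spells out details the paper leaves implicit.
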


\smallskip
\medskip
\noindent
\emph{Part 2 (coarse-graining and diffusive approximation).}
We fix a large parameter $M\in\N$ and we
define an approximation $Z_{N,M}^{(\diff)}$ of
the partition function $Z_{N}^{\beta_N}$ from 
\eqref{ZconXdom}, as follows:\footnote{Heuristically, 
these are good approximations because
the main contribution to \eqref{ZconXdom}
will be shown to come from $b'_{i-1} \approx N^{\alpha'_{i-1}}$ and $b_i \approx N^{\alpha_i}$
with $\alpha'_{i-1}<\alpha_i$, hence $b'_{i-1} \ll b_i$.}
\begin{enumerate}
\item we set $b'_{i-1} = 0$, $z'_{i-1}=0$ in each $X_{N,[b'_{i-1},b_i;b_i']}^{\dom}(z'_{i-1},z_i;z'_i)$;
\item we impose that each pair $b_i \le b'_i$ belongs to the same
interval $(N^{\frac{j-1}{M}}, N^{\frac{j}{M}}]$, for some $j=1,\ldots, M$, and we ignore
the constraint $b_i - b'_{i-1} > b_{i-1}$.
\end{enumerate}
This yields the following definition of $Z_{N,M}^{(\diff)}$:
\begin{equation}\label{Z2multform}
	\begin{split}
	Z_{N,M}^{(\diff)}
	\, := \,1 \,+\, \sum_{\ell=1}^\infty \, \sum_{1\le j_1 < \ldots < j_\ell \le M} \, 
	\prod_{i=1}^\ell X_{N,M}^{\dom}(j_i)
	\,= \,\prod_{j=1}^M \, \left( 1 + X_{N,M}^{\dom}(j) \right) \,,
\end{split}
\end{equation}
where we set
\begin{equation}\label{Xdomj}
	X_{N,M}^{\dom}(j) := \sum_{b\le b' \in (N^{\frac{j-1}{M}},N^{\frac{j}{M}} ] }
	\ \sum_{z,z' \in \Z^2} \ X_{N,[0,b;b']}^{\dom}(0,z;z') \qquad
	\text{for} \ j=1,\ldots, M \,.
\end{equation}

\smallskip

We prove that $Z_{N,M}^{(\diff)}$ is close
to $Z_{N}^{\beta_N}$ in $L^2$ for $N \gg M \gg 1$, in the following sense.

\begin{lemma}[Coarse-graining and diffusive approximation]\label{th:second-step}
The following holds:
\begin{equation}\label{eq:second-step}
	\limsup_{M\to\infty} \ \limsup_{N\to\infty} \
	\big\| Z_N^{\beta_N} - Z_{N,M}^{(\diff)} \big\|_{L^2} = 0 \,.
\end{equation}
\end{lemma}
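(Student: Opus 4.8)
The plan is to reduce everything to a \emph{second-moment computation}. Both the record-time decomposition \eqref{ZconXdom} of $Z_N^{\beta_N}$ and the coarse-grained product \eqref{Z2multform} defining $Z_{N,M}^{(\diff)}$ are polynomial chaos in the i.i.d.\ disorder $(\eta_N(n,x))$: expanding the products over record positions yields sums of monomials $\prod \eta_N(n_i,x_i)$, and since the successive blocks live on \emph{disjoint} time-intervals (because $b'_{i-1}<b_i$ in \eqref{ZconXdom}, and $j_1<\cdots<j_\ell$ in \eqref{Z2multform}), these monomials are orthonormal in $L^2$. Hence I would expand $\| Z_N^{\beta_N} - Z_{N,M}^{(\diff)} \|_{L^2}^2 = \bbE[(Z_N^{\beta_N})^2] - 2\,\bbE[Z_N^{\beta_N} Z_{N,M}^{(\diff)}] + \bbE[(Z_{N,M}^{(\diff)})^2]$ and show that each of the three terms converges, as $N\to\infty$ and then $M\to\infty$, to the same constant $\frac{1}{1-\hat\beta^2}$, so that the alternating sum vanishes. (This value is forced by the eventual log-normal limit, whose variance is $\sigma_{\hat\beta}^2=\log\frac{1}{1-\hat\beta^2}$.)

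The two \emph{self-terms} are routine geometric-series estimates of exactly the type carried out in \eqref{eq:arg1}--\eqref{eq:arg2}. For $\bbE[(Z_N^{\beta_N})^2]$ one sums the squared chaos coefficients of \eqref{eq:Zpoly}, which by the same Riemann-sum and geometric-series computation converges to $\sum_{k\ge0}(\hat\beta^2)^k=\frac{1}{1-\hat\beta^2}$, using $\sigma_N^2\sim\hat\beta^2/R_N$ and $R_N\sim\frac1\pi\log N$ from \eqref{eq:sub-critical}--\eqref{eq:RN} and subcriticality $\sigma_N^2 R_N\to\hat\beta^2<1$. For $\bbE[(Z_{N,M}^{(\diff)})^2]$ the product structure and independence of the $X_{N,M}^{\dom}(j)$ across scales give $\prod_{j=1}^M(1+\bbE[X_{N,M}^{\dom}(j)^2])$; each factor contributes a single dyadic scale of logarithmic width $1/M$, so that $\lim_N\bbE[X_{N,M}^{\dom}(j)^2]$ is of size $O(1/M)$ and the product over $j$ converges to $\frac{1}{1-\hat\beta^2}$ as $M\to\infty$.

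The heart of the proof is the \emph{cross-term} $\bbE[Z_N^{\beta_N} Z_{N,M}^{(\diff)}]$, where orthogonality selects the overlap of the two coefficient families and forces a comparison of the \emph{exact} inter-block propagators $q_{b_i-b'_{i-1}}(z_i-z'_{i-1})$ against their \emph{diffusive} replacements $q_{b_i}(z_i)$ (i.e.\ the effect of setting $b'_{i-1}=0,\ z'_{i-1}=0$), together with the coarse-graining of $(b_i,b'_i)$ to dyadic scales. Here I would invoke the local central limit theorem: summing over the internal vertex $z'_{i-1}$ against the previous block's kernel produces a probability mass, and the ratio $q_{b_i-b'_{i-1}}(z_i-z'_{i-1})/q_{b_i}(z_i)\to1$ in the regime $b'_{i-1}\ll b_i$ that, by the footnote heuristic after \eqref{Z2multform}, carries the dominant contribution. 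The coarse-graining error is controlled by the observation that a scale coincidence (two consecutive record blocks in the same interval $(N^{(j-1)/M},N^{j/M}]$), or a block straddling a scale boundary, restricts one logarithmic time-integration to a window of relative size $1/M$, costing a factor $O(1/M)$ in the geometric-series bound; summing over chaos order and over $\ell$ keeps this $O(1/M)$ uniformly in $N$.

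The main obstacle I anticipate is precisely this cross-term: ensuring the diffusive propagator replacement is valid \emph{uniformly over the entire nested block structure} rather than for a single factor, and that the local-CLT errors do not accumulate across the unboundedly many chaos orders. Two points require care: one must show that the complementary regime $b'_{i-1}\not\ll b_i$ is itself negligible (this is where the record-time constraint $b_i-b'_{i-1}>b_{i-1}$, enforcing scale separation in the limit, is essential), and one needs a uniform Gaussian-type upper bound on $q_n$ so that the ratio of coefficient families is squeezed to $1$ while the geometric series with ratio $\hat\beta^2<1$ keeps all order-by-order errors summable. Once the $O(1/M)$ coarse-graining bound and the diffusive comparison are in place, the three limits coincide and \eqref{eq:second-step} follows.
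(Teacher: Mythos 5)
Your overall strategy --- expanding $\| Z_N^{\beta_N} - Z_{N,M}^{(\diff)} \|_{L^2}^2$ into two self-terms and a cross-term and showing all three converge to $\frac{1}{1-\hat\beta^2}$ --- is sound in outline, and your treatment of the two self-terms matches what the paper proves: $\lim_N \bbE[(Z_N^{\beta_N})^2] = \frac{1}{1-\hat\beta^2}$ (the upper bound is \eqref{eq:argu}, the matching lower bound is obtained via \eqref{eq:goal2}), and $\lim_M\lim_N \bbE[(Z_{N,M}^{(\diff)})^2] = \frac{1}{1-\hat\beta^2}$, which follows from independence of the $X_{N,M}^{\dom}(j)$ across scales together with Lemma~\ref{th:brackets}. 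The structural difference is that the paper \emph{never computes a cross-term}: it interpolates through two auxiliary objects $Z_{N,K,M}^{(\cg)}$ and $Z_{N,K,M}^{(\cg')}$ (see \eqref{Zapprox1}, \eqref{Zapprox2}), built with an extra truncation parameter $K$, chosen so that $Z^{(\cg)}_{N,K,M}$ is a \emph{restriction} of the chaos expansion of $Z_N^{\beta_N}$ and $Z^{(\cg')}_{N,K,M}$ is a restriction of that of $Z_{N,M}^{(\diff)}$. For restrictions of an orthogonal expansion, Pythagoras gives $\|Z - Z'\|_{L^2}^2 = \|Z\|_{L^2}^2 - \|Z'\|_{L^2}^2$ exactly (see \eqref{eq:invi}), so those two comparisons reduce to second-moment limits; the only genuinely two-sided comparison, $Z^{(\cg)}$ versus $Z^{(\cg')}$ (the diffusive replacement $b'_{i-1}, z'_{i-1} \to 0$), is handled by a per-factor multiplicative bound (Lemma~\ref{th:clai}) combined with telescoping and orthogonality, see \eqref{eq:plugi}--\eqref{eq:inan}.

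The genuine gap in your proposal is exactly the step you yourself flag as ``the heart'': the cross-term. The pointwise local-CLT comparison $q_{b_i - b'_{i-1}}(z_i - z'_{i-1})/q_{b_i}(z_i) \to 1$ holds only when \emph{simultaneously} $b'_{i-1} \ll b_i$, $|z'_{i-1}| \ll \sqrt{b_i}$ and $|z_i| = O(\sqrt{b_i})$; on the rest of the summation range appearing in $\bbE[Z_N^{\beta_N} Z_{N,M}^{(\diff)}]$ --- adjacent windows $j_i = j_{i-1}+1$, where $b'_{i-1}$ and $b_i$ can be comparable; non-diffusive $z_i, z'_{i-1}$; blocks with $b'_i \gg b_i$ --- the ratio is uncontrolled, and you give no mechanism for excising these configurations. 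Bounding their contribution to the cross-term (say by Cauchy--Schwarz) requires showing they carry vanishing mass in the \emph{second moments}, and that is precisely the content of the paper's $K$-truncation construction: the restrictions $b_i \le \frac1K N^{j_i/M}$, $b'_i \le K b_i$, $|z_i| \le K\sqrt{b_i}$, $|z'_i| \le K^2\sqrt{b_i}$, $j_i - j_{i-1} \ge 2$ in \eqref{eq:restriction-j}--\eqref{eq:restriction-b}, together with the matching lower bound of Lemma~\ref{lemmaXdomlow} proving that the restricted region still carries the full mass $\frac{1}{1-\hat\beta^2}$. Similarly, assembling the per-factor ratio estimates into a bound uniform in the number of blocks $\ell$ and in the chaos order is the telescoping-plus-geometric-series argument of \eqref{eq:plugi}--\eqref{eq:inan}; this is where the factor $4^\ell$ is absorbed by $\prod_i I_M(j_i)$ with $I_M(j) = O(1/M)$. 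These pieces constitute most of Section~\ref{sec:proof-second-step} and Appendix~\ref{sec:technical}; your proposal correctly names the obstacles and the right heuristics (the $O(1/M)$ cost of scale coincidences, Gaussian bounds, scale separation), but does not supply the argument that overcomes them. The missing idea is the restriction/Pythagoras device with the auxiliary parameter $K$, which converts every such negligibility claim into a computable second-moment limit instead of an uncontrolled error term.
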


\noindent
The proof of this result is given in subsection~\ref{sec:proof-second-step} below.

\smallskip
\medskip
\noindent
\emph{Part 3 (log approximation).}
The product form of $Z_{N,M}^{(\diff)}$ in \eqref{Z2multform} is especially suitable
to take the logarithm.
We thus prove a preliminary version of our goal~\eqref{eq:apprZ}, where
we replace $\log Z_N^{\beta_N}$ by $\log Z_{N,M}^{(\diff)}$ (and convergence in $L^2$
by convergence in probability).
To this purpose, we define the event
\begin{equation} \label{eq:ANM}
	A_{N,M}:=  \bigcap_{j=1}^M \big\{ |X_{N,M}^{\dom}(j)| \le \tfrac{1}{2} \big\}  \,,
\end{equation}
which ensures that $Z_{N,M}^{(\diff)}>0$,
see \eqref{Z2multform}.

\begin{lemma}[log approximation]\label{th:third-step}
Recall $X_N^\dom$ from \eqref{eq:XNdom}.
For any $\epsilon > 0$ we have
\begin{equation}\label{approxlogXdom}
	\lim_{M\to\infty} \
	\limsup_{N\to\infty} \  \bbP \Big( \big| \log Z_{N,M}^{(\diff)} - 
	\big\{ X_N^\dom - \tfrac{1}{2} \bbE[(X_N^\dom)^2] \big\} 
	\big| > \epsilon , \, A_{N,M} \Big)
	= 0 \,,
\end{equation}
for $A_{N,M} \subseteq \{Z_{N,M}^{(\diff)}>0\}$ defined in \eqref{eq:ANM}
(so that $\log Z_{N,M}^{(\diff)}$ is well-defined) which satisfies
\begin{equation}\label{PA_NM}
	\lim_{M \rightarrow \infty} 
	\ \limsup_{N \to\infty} \ \bbP \big( (A_{N,M})^c \big)  \,=\, 0 \,.
\end{equation}
\end{lemma}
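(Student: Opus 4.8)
The plan is to exploit the \emph{product form} \eqref{Z2multform}, which gives
\[
\log Z_{N,M}^{(\diff)} = \sum_{j=1}^M \log\bigl(1 + X_{N,M}^{\dom}(j)\bigr),
\]
and to Taylor-expand each logarithm as $\log(1+x) = x - \tfrac12 x^2 + O(|x|^3)$. The crucial structural fact, which I would establish first, is that the random variables $X_{N,M}^{\dom}(j)$, $j=1,\ldots,M$, are \emph{independent and centered}: by \eqref{Xdomj} and \eqref{XNdom}, every chaos contributing to $X_{N,M}^{\dom}(j)$ involves only disorder variables $\eta_N(n,x)$ whose time coordinate lies in $(N^{\frac{j-1}{M}}, N^{\frac{j}{M}}]$ (all intermediate times $n_i$ are squeezed between $b$ and $b'$, both in the $j$-th block), and these blocks are disjoint. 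This independence is what makes the sums over $j$ tractable. Matching the expansion to the target then amounts to showing that $\sum_{j} X_{N,M}^{\dom}(j)$ recovers $X_N^\dom$ and that $\sum_{j} (X_{N,M}^{\dom}(j))^2$ concentrates on $\bbE[(X_N^\dom)^2]$.

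For the event I would take $A_{N,M} := \{\max_{j\le M} |X_{N,M}^{\dom}(j)| \le \epsilon_M\}$ with $\epsilon_M \downarrow 0$ chosen slowly; on $A_{N,M}$ each factor $1 + X_{N,M}^{\dom}(j) \ge 1-\epsilon_M>0$, so $Z_{N,M}^{(\diff)}>0$ and the Taylor expansion is licit, with remainder bounded by $C \sum_j |X_{N,M}^{\dom}(j)|^3 \le C\,\epsilon_M \sum_j (X_{N,M}^{\dom}(j))^2$. To verify \eqref{PA_NM} the key point is that coarse-graining makes each block \emph{high-temperature}: the per-block effective parameter is of order $\sigma_N^2 R_{N^{1/M}} \approx \hat\beta^2/M$, so hypercontractivity \eqref{eq:hypercontractivity} with $p=4$ (whose constant $(p-1)=3$ is harmless once $M$ is large) yields $\bbE[(X_{N,M}^{\dom}(j))^4] = O(1/M^2)$ and $\bbE[(X_{N,M}^{\dom}(j))^2] = O(1/M)$, uniformly in $j$ and $N$. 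A union bound and fourth-order Chebyshev then give $\bbP((A_{N,M})^c) \le \sum_j \bbE[(X_{N,M}^{\dom}(j))^4]/\epsilon_M^4 = O(1/(M\,\epsilon_M^4)) \to 0$.

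The two main terms are then handled separately. For the \emph{second-order term}, independence gives $\var\bigl(\sum_j (X_{N,M}^{\dom}(j))^2\bigr) = \sum_j \var\bigl((X_{N,M}^{\dom}(j))^2\bigr) \le \sum_j \bbE[(X_{N,M}^{\dom}(j))^4] = O(1/M) \to 0$, so $\sum_j (X_{N,M}^{\dom}(j))^2$ converges in probability to $\sum_j \bbE[(X_{N,M}^{\dom}(j))^2]$ (and the cubic remainder vanishes). For the \emph{first-order term}, $\sum_j X_{N,M}^{\dom}(j)$ is precisely the part of $X_N^\dom$ in which the extreme times $n_1,n_k$ fall in a \emph{common} block; writing the complementary ``straddling'' piece and computing its $L^2$ norm by the renewal/second-moment calculus (summing out the spatial variables into the kernels $u_m$, exactly as in the proof of Theorem~\ref{th:singular}), I would show it is negligible in the iterated limit $\lim_M\lim_N$. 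The same calculation identifies $\sum_j \bbE[(X_{N,M}^{\dom}(j))^2]$ with $\bbE[(X_N^\dom)^2]$ up to a vanishing straddling error. Combining the three estimates gives, on $A_{N,M}$,
\[
\log Z_{N,M}^{(\diff)} = \sum_j X_{N,M}^{\dom}(j) - \tfrac12 \sum_j (X_{N,M}^{\dom}(j))^2 + o_{\bbP}(1) = X_N^\dom - \tfrac12 \bbE[(X_N^\dom)^2] + o_{\bbP}(1),
\]
which is \eqref{approxlogXdom}.

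I expect the main obstacle to be the \emph{straddling estimate} in the first-order term: proving that restricting to chaos whose extreme times share a single logarithmic block costs nothing in $L^2$. It should work because the domination constraint $\max_{i\ge2}(n_i-n_{i-1})\le n_1$ forces the span $n_k-n_1$ to stay comparable to $n_1$ for the bounded values of $k$ that dominate (after truncating large $k$ via subcriticality \eqref{hyp2}), so on the logarithmic scale $n_1$ and $n_k$ differ by $o(1)$; hence, for fixed $M$, configurations crossing one of the finitely many block boundaries contribute negligibly as $N\to\infty$ by a Riemann-sum boundary-measure argument. Controlling this uniformly while summing over the chaos order $k$ is the delicate part.
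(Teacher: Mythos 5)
Your overall architecture — the product form of $Z_{N,M}^{(\diff)}$, Taylor expansion of $\log(1+x)$, independence and centering of the block variables $X_{N,M}^{\dom}(j)$, and matching of the first- and second-order terms — is exactly the paper's. Your treatment of the first-order term is also essentially the paper's: since $\sum_j X_{N,M}^{\dom}(j)$ is a sub-sum of the chaos $X_N^{\dom}$, orthogonality reduces the ``straddling'' estimate to a comparison of second moments, both of which converge to $\int_0^1 \frac{\hat\beta^2}{1-\hat\beta^2 s}\,\dd s$ by the renewal computation of Lemma~\ref{th:brackets}; so the step you flagged as the main obstacle is in fact the easy one. The genuine gap is in your moment bounds. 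You invoke hypercontractivity with $p=4$, claiming the constant $p-1=3$ is ``harmless once $M$ is large'' because ``the per-block effective parameter is of order $\sigma_N^2 R_{N^{1/M}} \approx \hat\beta^2/M$''. That premise is false: inside block $j$ the chaos order is unbounded and each increment $n_i-n_{i-1}\le n_1$ may range up to $b \le N^{j/M}$, so each increment carries a factor $\sigma_N^2 R_b \approx \hat\beta^2\, j/M$, which for the late blocks ($j$ of order $M$) is close to $\hat\beta^2$, not to $\hat\beta^2/M$. (The smallness $\bbE[X_{N,M}^{\dom}(j)^2]=O(1/M)$ comes solely from the constraint that $n_1$ lies in a thin block, i.e.\ from the first factor $\sum_b \sigma_N^2 u_b$, not from a small geometric ratio.) Consequently the hypercontractive bound at $p=4$ produces the series $\sum_{k}(3\sigma_N^2 R_b)^{k-1}$, which diverges as soon as $3\hat\beta^2\, j/M \ge 1$; your bounds $\bbE[(X_{N,M}^{\dom}(j))^4]=O(1/M^2)$ are therefore unobtainable this way whenever $\hat\beta^2 \ge \tfrac13$, and everything downstream that uses fourth moments — the Chebyshev bound for $\bbP((A_{N,M})^c)$ and the variance bound for $\sum_j (X_{N,M}^{\dom}(j))^2$ — fails on part of the subcritical regime $\hat\beta<1$.

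The repair is what the paper does in Lemma~\ref{lem:hyper} and the proof of \eqref{approxXdom-2}: take $p=p_{\hat\beta}>2$ \emph{close to} $2$, so that the hypercontractivity constant $C_p$ (which satisfies $C_p\downarrow 1$ as $p\downarrow 2$) obeys $C_p\hat\beta^2<1$; this yields the uniform bound $\limsup_N \bbE[|X_{N,M}^{\dom}(j)|^p]\le C/M^{p/2}$, see \eqref{M2}. Then $\bbP((A_{N,M})^c)$ is controlled by Markov's inequality with exponent $p$, giving $O(M^{1-p/2})$; the Taylor remainder is bounded by $c|x|^p$ (valid for $|x|\le\tfrac12$ since $2<p<3$); and, crucially, the concentration of $\sum_j (X_{N,M}^{\dom}(j))^2$ can no longer be proved by a variance computation, because only moments of order $q=p/2>1$ of the squares are available. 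Instead one runs a truncated weak law of large numbers: truncate the squares at level $M^{-\alpha}$ with $\alpha\in(\tfrac12,1)$, bound the tail part via the Lyapunov-type estimate \eqref{eq:Lya}, and bound the truncated part by a variance computation. This truncation step is the idea your proposal is missing, and it is forced on you once $p$ must be taken near $2$.
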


\noindent
The proof of this result is given in subsection~\ref{sec:proof-third-step} below.

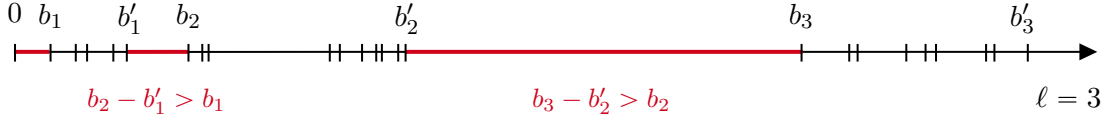
\begin{figure}
	\centering
	\tikzset{every picture/.style={line width=0.75pt}} 
	
	\begin{tikzpicture}[x=0.75pt,y=0.75pt,yscale=-1,xscale=1]
	
	\draw    (158.6,54.51) ;
	\draw [color={rgb, 255:red, 208; green, 2; blue, 27 }  ,draw opacity=1 ][line width=1.5]    (58.36,54.49) -- (76.12,54.49) ;
	\draw [color={rgb, 255:red, 0; green, 0; blue, 0 }  ,draw opacity=1 ]   (76.12,54.49) -- (107.48,54.49) ;
	\draw [color={rgb, 255:red, 208; green, 2; blue, 27 }  ,draw opacity=1 ][line width=1.5]    (114.59,54.51) -- (137.16,54.49) -- (145.77,54.49) ;
	\draw [color={rgb, 255:red, 0; green, 0; blue, 0 }  ,draw opacity=1 ]   (145.45,54.53) -- (221.52,54.49) ;
	\draw [color={rgb, 255:red, 208; green, 2; blue, 27 }  ,draw opacity=1 ][line width=1.5]    (253.99,54.49) -- (451.83,54.49) ;
	\draw [color={rgb, 255:red, 0; green, 0; blue, 0 }  ,draw opacity=1 ]   (451.83,54.49) -- (595.9,54.49) ;
	\draw [shift={(598.9,54.49)}, rotate = 180] [fill={rgb, 255:red, 0; green, 0; blue, 0 }  ,fill opacity=1 ][line width=0.08]  [draw opacity=0] (8.93,-4.29) -- (0,0) -- (8.93,4.29) -- cycle    ;
	\draw    (89.23,50.48) -- (89.23,59.35) ;
	\draw    (94.87,50.48) -- (94.87,59.35) ;
	\draw    (107.97,50.5) -- (107.97,59.37) ;
	\draw    (76.64,50.5) -- (76.64,59.37) ;
	\draw    (145.45,50.5) -- (145.45,59.37) ;
	\draw    (155.49,50.5) -- (155.49,59.37) ;
	\draw    (152.35,50.5) -- (152.35,59.37) ;
	\draw    (215.96,50.5) -- (215.96,59.37) ;
	\draw    (220.97,50.5) -- (220.97,59.37) ;
	\draw    (253.78,50.5) -- (253.78,59.37) ;
	\draw    (232.06,50.5) -- (232.06,59.37) ;
	\draw    (239.14,50.5) -- (239.14,59.37) ;
	\draw    (513.21,50.48) -- (513.21,59.35) ;
	\draw    (518.43,50.48) -- (518.43,59.35) ;
	\draw    (451.34,50.5) -- (451.34,59.37) ;
	\draw    (503.61,50.48) -- (503.61,59.35) ;
	\draw    (114.69,50.5) -- (114.69,59.37) ;
	\draw    (58.91,50.5) -- (58.91,59.37) ;
	\draw [color={rgb, 255:red, 0; green, 0; blue, 0 }  ,draw opacity=1 ]   (107.48,54.49) -- (114.59,54.51) ;
	\draw    (241.95,50.5) -- (241.95,59.37) ;
	\draw    (250.1,50.5) -- (250.1,59.37) ;
	\draw    (221.52,54.49) -- (253.99,54.49) ;
	\draw    (543.41,50.5) -- (543.41,59.37) ;
	\draw    (547.57,50.5) -- (547.57,59.37) ;
	\draw    (564.22,50.5) -- (564.22,59.37) ;
	\draw    (475.13,50.5) -- (475.13,59.37) ;
	\draw    (479.31,50.5) -- (479.31,59.37) ;
	
	\draw (53.6,28.74) node [anchor=north west][inner sep=0.75pt]  [font=\normalsize]  {$0$};
	\draw (68.97,28.65) node [anchor=north west][inner sep=0.75pt]  [font=\normalsize]  {$b_{1}$};
	\draw (108.46,28.65) node [anchor=north west][inner sep=0.75pt]  [font=\normalsize]  {$b_{1} '$};
	\draw (137.68,28.65) node [anchor=north west][inner sep=0.75pt]  [font=\normalsize]  {$b_{2}$};
	\draw (246.71,28.65) node [anchor=north west][inner sep=0.75pt]  [font=\normalsize]  {$b_{2} '$};
	\draw (443.44,28.65) node [anchor=north west][inner sep=0.75pt]  [font=\normalsize]  {$b_{3}$};
	\draw (553.79,28.62) node [anchor=north west][inner sep=0.75pt]  [font=\normalsize]  {$b_{3} '$};
	\draw (566.71,70.74) node [anchor=north west][inner sep=0.75pt]  [font=\normalsize]  {$\ell =3$};
	\draw (93.39,70.78) node [anchor=north west][inner sep=0.75pt]  [font=\small,color={rgb, 255:red, 208; green, 2; blue, 27 }  ,opacity=1 ]  {$b_{2} -b_{1} ' >b_{1}$};
	\draw (315.38,70.74) node [anchor=north west][inner sep=0.75pt]  [font=\small,color={rgb, 255:red, 208; green, 2; blue, 27 }  ,opacity=1 ]  {$b_{3} -b_{2} ' >b_{2}$};

	\end{tikzpicture}
	
	\caption{An example of the variables $b_i, b'_i$ in \eqref{ZconXdom}.
	These correspond to \emph{record times} which satisfy $b_i - b_{i-1}' > b_{i-1}$,
	see subsection~\ref{sec:proof-first-step}.}
	\label{fig1}
\end{figure}

\smallskip
\medskip
\noindent
\emph{Part 4 (final approximation).}
At last, we complete the proof of Theorem~\ref{th:apprZ}.
Our final goal \eqref{eq:apprZ} is a consequence of the next lemma,
where we prove convergence in probability and 
boundedness in $L^p$ for some $p>2$. 

\begin{lemma}[Final approximation]\label{th:fourth-step}
Recall $X_N^\dom$ from \eqref{eq:XNdom}.
For any $\epsilon > 0$ we have
\begin{equation}\label{eq:goal1}
	\lim_{N\to\infty} \, \bbP \big( \big| \log Z_N^{\beta_N} - 
	\big\{ X_N^\dom - \tfrac{1}{2} \bbE[(X_N^\dom)^2] \big\} 
	\big| > \epsilon \big) = 0 \,.
\end{equation}
Moreover, for some $p>2$ we have
\begin{equation} \label{eq:boundLp}
	\sup_{N\in\N} \ \bbE\big[ \big| \log Z_N^{\beta_N} \big|^p \big] < \infty \,, \qquad \
	\sup_{N\in\N} \ \bbE\big[ \big| X_N^\dom \big|^p \big] < \infty \,.
\end{equation}
\end{lemma}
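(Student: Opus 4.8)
The plan is to establish the two statements \eqref{eq:goal1} and \eqref{eq:boundLp} separately and then combine them: convergence in probability together with boundedness in $L^p$ for some $p>2$ makes the squared errors uniformly integrable, which upgrades \eqref{eq:goal1} to the $L^2$ convergence \eqref{eq:apprZ}. Write $Y_N := X_N^\dom - \tfrac12 \bbE[(X_N^\dom)^2]$. To prove \eqref{eq:goal1} I would split, on the event $A_{N,M} \subseteq \{Z_{N,M}^{(\diff)}>0\}$,
\begin{equation*}
	\log Z_N^{\beta_N} - Y_N = \big(\log Z_N^{\beta_N} - \log Z_{N,M}^{(\diff)}\big) + \big(\log Z_{N,M}^{(\diff)} - Y_N\big) \,.
\end{equation*}
The second bracket is handled directly by the log approximation \eqref{approxlogXdom} of Lemma~\ref{th:third-step}, while the event $A_{N,M}$ is harmless by \eqref{PA_NM}; everything is controlled in the iterated order $\lim_{M}\limsup_{N}$, and since the left-hand side does not depend on $M$ this suffices for the single limit in \eqref{eq:goal1}. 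The crux is the first bracket: I must pass from the $L^2$-closeness $\|Z_N^{\beta_N}-Z_{N,M}^{(\diff)}\|_{L^2}\to 0$ of Lemma~\ref{th:second-step} to closeness of the logarithms, via $|\log a - \log b|\le |a-b|/\min(a,b)$, which demands a lower bound on both partition functions.

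The delicate point is that this lower bound must \emph{not} degrade as $M\to\infty$, and I would produce it by transfer. Since $X_N^\dom$ is a polynomial chaos whose second moment is dominated, term by term, by that of $Z_N^{\beta_N}-1$, the sequence $\{Y_N\}$ is bounded in $L^2$, hence tight: there is $L<\infty$, independent of $N$ and $M$, with $\sup_N \bbP(Y_N<-L)$ as small as desired. Feeding this into \eqref{approxlogXdom} shows that on $A_{N,M}$, up to an event of small probability, $\log Z_{N,M}^{(\diff)} \ge -L-1$, i.e.\ $Z_{N,M}^{(\diff)}\ge \delta_0 := \rme^{-L-1}$ with $\delta_0$ \emph{independent of $M$}. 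On the further event $\{|Z_N^{\beta_N}-Z_{N,M}^{(\diff)}|\le \delta_0/2\}$ this forces $Z_N^{\beta_N}\ge \delta_0/2$ as well, so both functions are bounded below by the fixed constant $\delta_0/2$ and $|\log Z_N^{\beta_N}-\log Z_{N,M}^{(\diff)}|\le 2\delta_0^{-1}|Z_N^{\beta_N}-Z_{N,M}^{(\diff)}|$. A Chebyshev bound combined with Lemma~\ref{th:second-step} then makes this term vanish in the order $\limsup_M\limsup_N$, precisely because $\delta_0$ is now a constant; this completes \eqref{eq:goal1}.

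For the moment bounds \eqref{eq:boundLp} I would treat the two quantities differently. For $X_N^\dom$ I would invoke hypercontractivity \eqref{eq:hypercontractivity2}, legitimate since $\eta_N$ has uniformly bounded moments of every order: its right-hand side is $\big(\sum_{k} C_p^{k}\, c_N(k)\big)^{p/2}$, where $c_N(k)$ is the order-$k$ contribution to $\bbE[(X_N^\dom)^2]$, which decays geometrically, $c_N(k)\le (\sigma_N^2 R_N)^k \to (\hat\beta^2)^k$, by the same subcriticality computation as in \eqref{eq:arg2}. Choosing $p>2$ close enough to $2$ that $C_p\,\hat\beta^2<1$ makes the series summable uniformly in $N$, giving $\sup_N\bbE[|X_N^\dom|^p]<\infty$. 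For $\log Z_N^{\beta_N}$ the positive part is controlled since $(\log^+ x)^p\le C\,x$ and all positive moments of $Z_N^{\beta_N}$ are bounded in the subcritical regime \cite{LZ21+,CZ21+}; the negative part $(\log^- Z_N^{\beta_N})^p$ instead requires a uniform negative-moment estimate $\sup_N\bbE[(Z_N^{\beta_N})^{-q}]<\infty$ for some $q>0$, from which $(\log^- Z_N^{\beta_N})^p\le C_q\,(Z_N^{\beta_N})^{-q}$ closes the bound.

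I expect this last input, the lower-tail (negative-moment) control of the partition function, to be the main obstacle: it lies outside the chaos machinery and is not supplied by the positive moment bounds, so I would obtain it from a separate concentration estimate for $\log Z_N^{\beta_N}$ (or cite one). Granting both bounds in \eqref{eq:boundLp}, the family $\{|\log Z_N^{\beta_N}-Y_N|^2\}$ is uniformly integrable, and combined with the convergence in probability \eqref{eq:goal1} this yields $\|\log Z_N^{\beta_N}-Y_N\|_{L^2}\to 0$, which is exactly the target \eqref{eq:apprZ}.
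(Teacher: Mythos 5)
Your proposal is correct and follows essentially the same route as the paper: you reduce \eqref{eq:goal1} to comparing $\log Z_N^{\beta_N}$ with $\log Z_{N,M}^{(\diff)}$, obtain the needed $M$-uniform lower bound on $Z_{N,M}^{(\diff)}$ by combining the log approximation \eqref{approxlogXdom} with a Chebyshev/tightness bound on $X_N^\dom - \tfrac12\bbE[(X_N^\dom)^2]$ (the paper organizes this with a parameter $\eta\downarrow 0$ instead of a fixed $\delta_0$, but the mechanism is identical), then transfer to $Z_N^{\beta_N}$ via Lemma~\ref{th:second-step}. For \eqref{eq:boundLp} the paper likewise uses hypercontractivity with $C_p\hat\beta^2<1$ for $X_N^\dom$ and, exactly as you anticipated, handles the left tail of $\log Z_N^{\beta_N}$ by citing a concentration-of-measure estimate (from \cite{CSZ20}) rather than deriving it from the chaos machinery.
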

\noindent
Notice that, once we have convergence in probability \eqref{eq:goal1}, to obtain 
convergence in $L^2$ it suffices to show \emph{uniform integrability of the squares of 
$\log Z_N^{\beta_N}$ and $X_N^\dom$}, which is in turn implied by boundedness in 
$L^p$ for some $p>2$, as in \eqref{eq:boundLp}.

Intuitively, we can deduce \eqref{eq:goal1} from \eqref{approxlogXdom} by
exploiting the approximation \eqref{eq:second-step}, but some care is needed
to handle the logarithm.

The proof of Lemma~\ref{th:fourth-step}, given in subsection~\ref{sec:proof-fourth-step},
concludes the proof of Theorem~\ref{th:apprZ}.\qed

\subsection{Proof of Lemma~\ref{th:first-step}}
\label{sec:proof-first-step}

We rewrite the sum over $n_1, \ldots, n_k$
in \eqref{eq:Zpoly} according to suitable \emph{record times}. The first record time is $n_{1}$;
the second  record time is the smallest $n_i$ for which the previous jump $n_i - n_{i-1}$
exceeds $n_1$; and so on. More precisely, the record times are $n_{j_1}, n_{j_2}, \ldots, n_{j_\ell}$
where we define $j_1 := 1$ and, assuming that $j_r < \infty$, we set
$j_{r+1} := \min\{i \in \{j_r+1, \ldots, k\}: \, n_i - n_{i-1} > n_{j_r}\}$, 
where we agree that $\min\emptyset := \infty$.
The number of record times is therefore
$\ell := \min\{r \ge 1: \ j_{r+1} = \infty\}$.

If we rename the record times as $b_r := n_{j_r}$, and
we also set $b_{r-1}' := n_{j_r-1}$, we have by construction
$b_2-b'_1 > b_1$ and, more generally, $b_i -b'_{i-1} > b_{i-1}$ for $i=2,\ldots, \ell$
(see Figure~\ref{fig1}).
If we name the corresponding space variables $z_r := x_{b_r}$ and $z'_{r-1} := x_{b'_{r-1}}$,
then we can rewrite \eqref{eq:Zpoly} equivalently as \eqref{ZconXdom},
with $X_{N,[a,b;b']}^{\dom}(x,z;z')$
defined in \eqref{XNdom}.\qed

\subsection{Proof of Lemma~\ref{th:second-step}}

\label{sec:proof-second-step}

The proof, which is long and structured, is based on explicit $L^2$ computations.
A key observation is that, by the expression \eqref{ZconXdom} for $Z_N^{\beta_N}$,
we can write
\begin{equation} \label{eq:reca}
\begin{split}
	\bbE \Big[ \big(Z_{N}^{\beta_N}\big)^2 \Big] = 1 + \sum_{\ell=1}^\infty 
	\ \sum_{\substack{0 < b_1 \le b'_1 < \ldots < b_\ell \le b'_\ell \le N:\\
	b_i - b'_{i-1} > b_{i-1} \, \forall i=2,\ldots, \ell}}
	\ \sum_{\underline{z}, \underline{z}' \in (\Z^2)^\ell} \
	\prod_{i=1}^\ell \bbE \Big[ \big( X^{\rm{dom}}_{N,[b_{i-1}',b_i;b_i']}(z_{i-1}',z_i;z'_i) 
	\big)^2 \Big] \,.
\end{split}
\end{equation}
To see why this holds, note that by \eqref{eq:Zpoly} we can write
\begin{equation} \label{eq:2mom}
	\begin{split}
	\bbE \big[ \big( Z_N^{\beta_N}  \big)^2 \big] = 1+ \sum_{k=1}^\infty (\sigma_N^2)^k 
	\sum_{\substack{0=:n_0 < n_1 < \ldots < n_k \le N \\
	x_0:=0, \ x_1, \ldots, x_k \in \Z^2 }} \
	\prod_{j=1}^k q_{n_j-n_{j-1}}(x_j-x_{j-1})^2 \,,
	\end{split}
\end{equation}
with $q_n(x) = \P(S_n = x \, | \, S_0=0)$, see \eqref{eq:qn}, and $\sigma_N$
as in \eqref{eq:eta}.
Similarly, by \eqref{XNdom},
\begin{equation} \label{eq:2momdom}
\begin{split}
	& \bbE \Big[ \big( X^{\rm{dom}}_{N,[a,b_;b']}(x,z; z') \big)^2 \Big]
	= \sum_{k=1}^\infty (\sigma_N^2)^k
	\, q_{b-a}(z-x)^2 \times  \\
	& \qquad\qquad\qquad\qquad \times 
	\sum_{\substack{b =: n_1 < n_2 < \ldots < n_{k-1} < n_k = b' \\
	\max\{ n_2-n_1, \ldots, n_k - n_{k-1} \} \le b}}
	\ \sum_{\substack{x_1 = z; \, x_k = z'\\
	x_2, \ldots, x_{k-1} \in \Z^2}} \
	\prod_{i=2}^k q_{n_i-n_{i-1}}(x_i-x_{i-1})^2 .
\end{split}
\end{equation}
When we plug \eqref{eq:2momdom} into \eqref{eq:reca}
we obtain \eqref{eq:2mom} by the same argument
in the proof of Lemma~\ref{th:first-step}, 
see subsection~\ref{sec:proof-first-step}, because the sum over
$n_j, x_j$ in \eqref{eq:2mom} can be rewritten in terms of record times,
which lead to the variables $b_r, b'_r$ and $z_r, z'_r$ in \eqref{eq:reca}.

\medskip

We now turn to the proof of \eqref{eq:second-step}. 
We will define two ``coarse-grained approximations'' $Z_{N, K,M}^{(\cg)}$
and $Z_{N, K,M}^{(\cg')}$, which depend on a further parameter $K\in\N$,
and we will show that
\begin{equation*}
	Z_N^{\beta_N} \approx Z_{N, K,M}^{(\cg)} \,, \qquad
	Z_{N, K,M}^{(\cg)} \approx Z_{N, K,M}^{(\cg')} \,, \qquad
	Z_{N, K,M}^{(\cg')} \approx Z_{N, M}^{(\diff)} \,,
\end{equation*}
where $\approx$ denotes closeness in $L^2$ when we let
$N\to\infty$, then $K\to\infty$ and finally $M\to\infty$.
More precisely, we are going to prove the following relations:
\begin{gather}\label{approx1}
	\limsup_{M\to\infty} \ \limsup_{K\to\infty} \ \limsup_{N\to\infty} \
	\big\| Z_N^{\beta_N} - Z_{N,K,M}^{(\cg)} \big\|_{L^2} = 0 \,, \\
	\label{approxdiff}
	\limsup_{M\to\infty} \ \limsup_{K\to\infty} \ 
	\limsup_{N\to\infty} \
	\big\| Z_{N,K,M}^{(\cg)} - Z_{N,K,M}^{(\cg')} \big\|_{L^2} = 0 \,, \\
	\label{approx2hat}
	\limsup_{M\to\infty} \ \limsup_{K\to\infty} \ \limsup_{N\to\infty} \
	\big\| Z_{N,K,M}^{(\cg')} -  Z_{N,M}^{(\diff)} \big\|_{L^2} = 0 \,,
\end{gather}
which together yield \eqref{eq:second-step}. We accordingly split the proof in three steps.

\subsubsection{Step 1: definition of $Z_{N,K,M}^{(\cg)}$ and
proof of \eqref{approx1}.} 

Let us fix $M, K, N\in\N$ with $1 \ll M \ll K \ll N$.
Our first coarse-graining approximation $Z_{N,K,M}^{(\cg)}$
of the partition function $Z_N^{\beta_N}$ in \eqref{ZconXdom} 
is obtained by \emph{suitably restricting the sums over
$\underline{b}, \underline{b}'$ and $\underline{z}, \underline{z}'$}:
\begin{equation}\label{Zapprox1}
\begin{split}
	Z_{N, K,M}^{(\cg)}
	:= 1+ \sum_{\ell=1}^{\infty} \ & 
	\sum_{\underline{j} \in \{1,\ldots, M\}^\ell_\ll} \ 
	\sum_{(\underline{b}, \underline{b}') \in \cB^\ell(\underline{j})} \
	\sum_{(\underline{z}, \underline{z}') \in \cS^\ell(\underline{b}, \underline{b}')} \
	\prod_{i=1}^\ell X_{N,[b'_{i-1},b_i;b_i']}^{\dom}(z'_{i-1},z_i;z'_i) \, \,,
\end{split}
\end{equation}
where we sum over $\underline{j} = (j_1,\ldots, j_\ell)$ in the following set: 
\begin{equation} \label{eq:restriction-j}
\begin{split}
	\{1,\ldots, M\}^\ell_\ll 
	:= \Big\{ \, 1 \le j_1 < \ldots < j_\ell \le M:
	\quad
	j_i - j_{i-1}\ge 2 \ \ \forall i=2,\ldots, \ell \Big\} \,,
\end{split}
\end{equation}
then, given $\underline{j} = (j_1,\ldots, j_\ell)$, we 
sum over $(\underline{b}, \underline{b}')$ in the set
\begin{equation} \label{eq:restriction-b}
\begin{split}
	\cB^\ell(\underline{j}) 
	:= \Big\{ \, (\underline{b}, \underline{b}') \in \N^\ell\times\N^\ell: \quad
	& b_i \in ( N^{\frac{j_i-1}{M}},  \tfrac{1}{K} N^{\frac{j_i}{M}} ] \,, \
	b_i' \in [b_i, K b_i ] \quad \forall i=1,\ldots, \ell \ \Big\} \,,
\end{split}
\end{equation}
and finally, given $(\underline{b}, \underline{b}')$,
we sum over $\underline{z}, \underline{z}'$ in the ``diffusive set''
\begin{equation*}
\begin{split}
	\cS^\ell(\underline{b}, \underline{b}') 
	:= \Big\{ \, (\underline{z}, \underline{z}') \in (\Z^2)^\ell \times (\Z^2)^\ell:
	\quad & |z_i| \le K \sqrt{b_i} \,, \
	|z'_i| \le K^2 \sqrt{b_i} \quad \forall i=1,\ldots,\ell  \Big\} \,.
\end{split}
\end{equation*}

To see that \emph{$Z_{N, K,M}^{(\cg)}$ in \eqref{Zapprox1} is a restriction of
$Z_N^{\beta_N}$ in \eqref{ZconXdom}},
note that for $(\underline{b}, \underline{b}') \in \cB^\ell(\underline{j})$ we have
$0 < b_1 \le b'_1 < \ldots < b_\ell \le b'_\ell \le N$, and
for large $N$ we also have
$b_i - b'_{i-1} > b_{i-1}$ for $i\ge 2$, because
$b_i > N^{\frac{j_i-1}{M}} \ge N^{\frac{j_{i-1}+1}{M}}
\ge K N^{\frac{1}{M}} b_{i-1}$ (recall that $j_i-j_{i-1}\ge 2$) hence
\begin{equation*}
\begin{split}
	b_i - b'_{i-1} > K N^{\frac{1}{M}} b_{i-1} - K b_{i-1} =
	(N^{\frac{1}{M}}-1) K \, b_{i-1} > b_{i-1} \qquad
	\text{for } N > 2^M \,.
\end{split}
\end{equation*}
Thus the range of the sums in \eqref{Zapprox1} is included in
the range of the sums in \eqref{ZconXdom}.
Since the terms in the polynomial chaos \eqref{eq:Zpoly} are orthogonal in $L^2$, 
it follows that
\begin{equation} \label{eq:invi}
	\big\| Z_N^{\beta_N} - Z_{N,K,M}^{(\cg)} \big\|_{L^2}^2 =
	\big\| Z_N^{\beta_N} \big\|_{L^2}^2 - \big\| Z_{N,K,M}^{(\cg)} \big\|_{L^2}^2 \,,
\end{equation}
hence to prove \eqref{approx1} it suffices to show that
\begin{gather}\label{eq:goal}
	\limsup_{N\to\infty} \ \bbE \big[ \big(Z_N^{\beta_N} \big)^2 \big] \le \frac{1}{1 - \hat{\beta}^2} \,,
	\\
	\label{eq:goal2}
	\liminf_{M\to\infty} \ \liminf_{K\to\infty} \ \liminf_{N\to\infty}  \
	\bbE \big[ \big( Z_{N,K,M}^{(\cg)} \big)^2 \big] \ge \frac{1}{1 - \hat{\beta}^2} \,.
\end{gather}

Relation \eqref{eq:goal} can be easily deduced from the expression \eqref{eq:2mom}.
Indeed, enlarging the sums to $1 \le n_j - n_{j-1} \le N$ and recalling
the definition \eqref{eq:RN} of $R_N$, we get
\begin{equation} \label{eq:argu}
	\begin{split}
	\bbE \big[ \big( Z_N^{\beta_N}  \big)^2 \big] &\le 1+ \sum_{k=1}^\infty (\sigma_N^2)^k 
	\sum_{\substack{1 \le n_j-n_{j-1}\le N\\ j=1,\ldots,k} } \ 
	\sum_{\substack{x_0:=0, \ x_1, \ldots, x_k \in \Z^2 }} \
	\prod_{j=1}^k q_{n_j-n_{j-1}}(x_j-x_{j-1})^2\\
	&= 1+ \sum_{k=1}^\infty (\sigma_N^2)^k
	\, \bigg( \sum_{n=1}^N \sum_{x \in \Z^2} q_n(x)^2 \bigg)^k
	=1+ \sum_{k=1}^\infty \big( \sigma_N^2 R_N\big)^k
	= \frac{1}{1-\sigma_N^2 R_N} \,.
	\end{split}
\end{equation}
Since $\sigma_N \sim \beta_N \sim \hat\beta \sqrt{\pi} / \sqrt{\log N}$,
see \eqref{eq:eta} and \eqref{eq:sub-critical}, and since $R_N \sim \frac{1}{\pi} \log N$,
see \eqref{eq:RN}, we see that \eqref{eq:goal} is proved.

We next prove \eqref{eq:goal2}.
By definition \eqref{Zapprox1} of $Z^{(\cg)}_{N, K,M}$,
in analogy with \eqref{eq:reca}, we have
\begin{equation} \label{eq:estZ10}
\begin{split}
	\bbE\Big[ \big(Z_{N, K,M}^{(\cg)}\big)^2 \Big]
	&= 1+ \sum_{\ell=1}^{\infty} \ 
	\sum_{\underline{j} \in \{1,\ldots, M\}^\ell_\ll} \,
	\sum_{\substack{(\underline{b}, \underline{b}') \in \cB^\ell(\underline{j}) \\
	(\underline{z}, \underline{z}') \in \cS^\ell(\underline{b}, \underline{b}')}} \,
	\prod_{i=1}^\ell \bbE \Big[ \big( X_{N,[b'_{i-1},b_i;b_i']}^{\dom}(z'_{i-1},z_i;z'_i)\big)^2 \Big] \,.
\end{split}
\end{equation}
We now give a lower bound on 
$\bbE \big[ \big( X_{N,[b'_{i-1},b_i;b_i']}^{\dom}(z'_{i-1},z_i;z'_i)\big)^2 \big]$
when we sum over $b_i, b'_i$ and $z_i, z'_i$ 
in the sets $\cB^\ell(\underline{j})$ and $\cS^\ell(\underline{b}, \underline{b}')$.
The next result is proved in Appendix~\ref{sec:lemmaXdomlow}.
	
\begin{lemma}\label{lemmaXdomlow}
For $N,M,K \in \N$ and $j \in \{1, \ldots, M\}$, define
\begin{equation} \label{eq:Xi} 
	\Xi_{N, M,K}(j) := 
	\inf_{\substack{0 \le a \le N^{\frac{(j-2)^+}{M}} \\
	\rule{0pt}{.8em}|x| \le K^2 \sqrt{a}}} \
	\sum_{\substack{b \in ( N^{\frac{j-1}{M}}, \frac{1}{K} N^{\frac{j}{M}} ] \\ 
	b' \in \left[b, K b \right]}} \
	\sum_{\substack{|z| \le K \sqrt{b} \\
	|z'| \le K^2 \sqrt{b}}} \
	 \bbE \Big[ \big( X^{\rm{dom}}_{N,[a,b_;b']}(x,z;z') 
	\big)^2 \Big] \,.
\end{equation}
Then, for any $M \in \N$ and $j \in \{1,\ldots, M\}$, we have
\begin{equation} \label{eq:Ij}
	\liminf_{K \to\infty} \ \liminf_{N\to\infty} \ \Xi_{N, M,K}(j)
	\,=\, I_M(j) := \int_{\frac{j-1}{M}}^{\frac{j}{M}} 
	\frac{\hat\beta^2}{1-\hat{\beta}^2 s } \dd s \,.
\end{equation}
\end{lemma}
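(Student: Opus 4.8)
The plan is to reduce everything to the second–moment identity \eqref{eq:2momdom} and to exploit the collapse of squared heat kernels: for the simple random walk one has $\sum_{y\in\Z^2} q_m(y)^2 = u_m$ (see \eqref{eq:uN}), so that summing $\prod_i q_{m_i}(\cdot)^2$ over \emph{all} intermediate and terminal space variables telescopes into the product $\prod_i u_{m_i}$. Concretely, ignoring for a moment the diffusive spatial cut-offs and the constraint $b'\le Kb$, one sums \eqref{eq:2momdom} over $z,z'$ and over the end time $b'$ (the increments being $\le b$, as dictated by the $\dom$-constraint) to obtain the geometric series $\sum_{k\ge 1}(\sigma_N^2 R_b)^{k-1}=(1-\sigma_N^2 R_b)^{-1}$, and hence the clean expression $\sum_b \sigma_N^2\,u_{b-a}\,(1-\sigma_N^2 R_b)^{-1}$. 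Using $u_b\sim \tfrac1{\pi b}$, $\sigma_N^2\sim \hat\beta^2/R_N$ and $\sigma_N^2 R_b\to\hat\beta^2 s$ for $b=N^s$ (see \eqref{eq:sub-critical}, \eqref{eq:RN}), a Riemann-sum approximation turns the sum over $b\in(N^{\frac{j-1}{M}},\tfrac1K N^{\frac{j}{M}}]$ into exactly $I_M(j)$. Writing $S(a,x)$ for the sum over $b,b',z,z'$ appearing in \eqref{eq:Xi}, so that $\Xi_{N,M,K}(j)=\inf_{a,x}S(a,x)$, the proof then splits into a short upper bound and a more delicate \emph{uniform} lower bound matching this heuristic value.

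For the upper bound it suffices, since $\Xi_{N,M,K}(j)$ is an infimum, to evaluate $S(a,x)$ at the admissible choice $a=0$, $x=0$. Discarding the cut-offs $|z|\le K\sqrt b$, $|z'|\le K^2\sqrt b$ and $b'\le Kb$ can only increase the (nonnegative) summands, and produces precisely the free sum $\sum_b \sigma_N^2 u_b (1-\sigma_N^2 R_b)^{-1}$ above. This tends to $I_M(j)$ as $N\to\infty$ (the factor $\tfrac1K$ merely pushes the upper endpoint to $\tfrac{j}{M}-\tfrac{\log K}{\log N}\to\tfrac{j}{M}$), giving $\liminf_K\liminf_N \Xi_{N,M,K}(j)\le I_M(j)$.

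The heart of the matter is the lower bound, which must hold uniformly over all admissible $a\le N^{\frac{(j-2)^+}{M}}$ and $|x|\le K^2\sqrt a$. First, since $b>N^{\frac{j-1}{M}}$ forces $a/b\le N^{-1/M}\to0$ and $|x|\le K^2\sqrt a\ll\sqrt b$, the prefactor summed over the first cut-off obeys $\sum_{|z|\le K\sqrt b} q_{b-a}(z-x)^2\ge (1-\epsilon_K)\,u_b$ by the Gaussian tail in the local CLT (see \eqref{eq:llt}), with $\epsilon_K\to0$ uniformly, using also $u_{b-a}=(1+o(1))u_b$. Next I truncate the chain to $k\le H$ steps: then the total internal time is $\le (H-1)b$, so $b'\le Hb\le Kb$ as soon as $K\ge H$, and the constraint $b'\le Kb$ becomes \emph{inactive}. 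It remains to control the terminal cut-off $|z'|\le K^2\sqrt b$; writing $\prod_{i\ge2} q_{m_i}(x_i-x_{i-1})^2=\big(\prod_{i\ge2}u_{m_i}\big)$ times the law of a random walk whose increments have density proportional to $q_{m_i}(\cdot)^2$ (of diffusive scale $\sqrt{m_i}$), the displacement $z'-z$ has diffusive scale $\sqrt{Hb}$, while $|z|\le K\sqrt b$ leaves room $(K^2-K)\sqrt b$ for $|z'-z|$; since this room dwarfs $\sqrt{Hb}$, the cut-off is met on an event of probability $\ge 1-\epsilon'_{K,H}$ with $\epsilon'_{K,H}\to0$ as $K\to\infty$ for fixed $H$, uniformly in the starting point $z$ and in the chain. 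Collecting these estimates and summing the truncated geometric series yields, uniformly in $(a,x)$,
\[
	S(a,x)\ \ge\ (1-\epsilon_K)(1-\epsilon'_{K,H})\sum_{b} \sigma_N^2\,u_b\,
	\frac{1-(\sigma_N^2 R_b)^{H}}{1-\sigma_N^2 R_b}\,.
\]
Letting $N\to\infty$ (Riemann sum), then $K\to\infty$, then $H\to\infty$ (so that $(\hat\beta^2 s)^H\to0$, using $\hat\beta^2 s<1$) gives $\liminf_K\liminf_N\Xi_{N,M,K}(j)\ge I_M(j)$, and combined with the upper bound this proves \eqref{eq:Ij}.

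I expect the main obstacle to be exactly this uniform lower bound: the simultaneous, uniform-in-$(a,x,z)$ removal of the three truncations ($|z|\le K\sqrt b$, $|z'|\le K^2\sqrt b$, $b'\le Kb$) without losing a constant fraction of the mass. The device that makes it work is the step-number truncation $k\le H$, which keeps both the number of increments and the total time bounded (by $Hb$) and thereby renders the underlying local-CLT tail estimates uniform; the auxiliary parameter $H$ is then sent to infinity at the very end, after $K$.
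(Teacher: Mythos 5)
Your proposal is correct, and its skeleton coincides with the paper's: both reduce everything to the second-moment identity \eqref{eq:2momdom}, truncate the chaos order so that the constraint $b'\le Kb$ becomes automatic, remove the diffusive spatial cut-offs by Chebyshev-type estimates, sum the resulting truncated geometric series, and pass to $I_M(j)$ via a Riemann sum with $b=N^s$; your explicit upper bound (evaluate at $a=x=0$, drop all restrictions, compare with the free sum) is exactly what the paper leaves implicit when it asserts that its lower bound \eqref{stimabassoXi} ``clearly implies'' the equality \eqref{eq:Ij}. The one genuine difference is how the spatial cut-offs are handled. The paper imposes \emph{per-step} restrictions $|y_1|\le\frac12 K\sqrt{b-a}$, $|y_i|\le\frac12 K\sqrt{m_i}$, which factorize exactly into the truncated quantities $u_n^{(K)}$, $R_N^{(K)}$ of \eqref{eq:uR} (each within a factor $1-C/K^2$ of $u_n$, $R_N$), and it reuses the same parameter $K$ as the bound on the chaos order, so a single limit $K\to\infty$ suffices. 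You instead keep the per-step sums untruncated (yielding the exact $R_b$), control only the terminal constraint $|z'|\le K^2\sqrt b$ by interpreting $\prod_i q_{m_i}(\cdot)^2/u_{m_i}$ as the law of a walk whose increments have variance $O(m_i)$ — a step which, to be complete, needs the standard bounds $\sup_y q_m(y)\le c/m$ and $u_m\ge c'/m$ to make that variance bound uniform — and you introduce an auxiliary order-truncation parameter $H\le K$ sent to infinity last. Both devices work and the limits are taken in a consistent order; the paper's is slightly more economical (one parameter, purely deterministic per-step bounds), while yours isolates more transparently why the cut-offs cost nothing: the endpoint displacement is diffusive of scale $\sqrt{Hb}$ while the allowed window grows like $K^2\sqrt b$.
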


Coming back to \eqref{eq:estZ10},
by definition \eqref{eq:Xi} of $\Xi_{N, M,K}(j)$, we have the lower bound
\begin{equation} \label{eq:estZ1}
\begin{split}
	\bbE\Big[ \big(Z_{N, K,M}^{(\cg)}\big)^2 \Big]
	& \,\ge\, 1 + \sum_{\ell=1}^{\infty} \ 
	\sum_{\underline{j} \in \{1,\ldots, M\}^\ell_\ll} \ 
	\prod_{i=1}^\ell \Xi_{N, M,K}(j_i) \,,
\end{split}
\end{equation}
which yields, by \eqref{eq:Ij},
\begin{equation} \label{eq:quasiliminf}
	\liminf_{K\to\infty} \ \liminf_{N\to\infty} \ 
	\bbE\Big[ \big(Z_{N, K,M}^{(\cg)}\big)^2 \Big] \,\ge\, 
	1 + \sum_{\ell=1}^{\infty} \ 
	\sum_{\underline{j} \in \{1,\ldots, M\}^\ell_\ll} \ 
	\prod_{i=1}^\ell I_M(j_i) \,.
\end{equation}
Recalling the definition \eqref{eq:restriction-j} of $\{1,\ldots, M\}^\ell_\ll$,
we can rewrite the r.h.s.\ of \eqref{eq:quasiliminf} as
\begin{equation*}
	1 + \sum_{\ell=1}^{\infty} \,
	\frac{1}{\ell !} \, \Bigg\{ \bigg(\sum_{j=1}^M I_M(j) \bigg)^\ell  \,-\, 
	\sum_{\substack{j_1,\ldots,j_\ell  \in \{1,\ldots, M\}\\ 
	\exists h  \ne k: \ |j_h - j_{k}| \le 1}} I_M(j_1) \,\cdots\, I_M(j_\ell) \Bigg\} \,.
\end{equation*}
The second term gives a vanishing contribution as $M\to\infty$, because
$\max_{1 \le j \le M} I_M(j) \le \frac{C}{M}$,
with $C := \frac{\hat{\beta}^2}{1 - \hat{\beta}^2} < \infty$, hence
\begin{equation*}
\begin{split}
	\sum_{\ell=1}^{\infty} \,
	\frac{1}{\ell !} \, \sum_{\substack{j_1,\ldots,j_\ell  \in \{1,\ldots, M\}\\ 
	\exists h  \ne k: \ |j_h - j_{k}| \le 1}} I_M(j_1) \,\cdots\, I_M(j_\ell)
	& \le 
	\sum_{\ell=1}^{\infty} \,
	\frac{1}{\ell !} \, 
	\frac{C^\ell}{M^\ell} \,  \binom{\ell}{2} 3 M^{\ell - 1}
	\,=\, \frac{C'}{M}
	\,\xrightarrow{\,M\to\infty\,}\, 0 \,,
	\end{split}
\end{equation*}
where $\binom{\ell}{2}$ is the number of pairs
$\{h,k\}$ with $h \ne k$ and $3M^{\ell -1}$ bounds the number of choices of 
$j_1, \dots, j_\ell $ with $j_h \in \{j_k -1, \, j_k, \, j_k +1\}$.
Since $\sum_{j=1}^M I_M(j) = \int_0^1 \frac{\hat\beta^2}{1-\hat{\beta}^2 s } \dd s 
= \log \frac{1}{1-\hat\beta^2}$, we have finally shown that
\begin{equation} \label{eq:lastZ1}
	\liminf_{M\to\infty} \ \liminf_{K\to\infty} \ \liminf_{N\to\infty} \ 
	\bbE\Big[ \big(Z_{N, K,M}^{(\cg)}\big)^2 \Big] \,\ge\, 
	1 + \sum_{\ell=1}^{\infty} \, \frac{1}{\ell !} \,
	\Big( \log\tfrac{1}{1-\hat\beta^2}\Big)^\ell \,=\, \frac{1}{1-\hat\beta^2} \,,
\end{equation}
which is \eqref{eq:goal2}.
This completes the proof of \eqref{approx1}.\qed

\subsubsection{Step 2: definition of $Z_{N,K,M}^{(\cg')}$ and
proof of \eqref{approxdiff}.}

Starting from $Z_{N,K,M}^{(\cg)}$ in \eqref{Zapprox1}, we
set $b'_{i-1} = 0$ and $z'_{i-1}=0$ inside each $X_{N}^{\dom}$
to obtain our second approximation: 
\begin{equation}\label{Zapprox2}
\begin{split}
	Z_{N, K,M}^{(\cg')}
	:= 1+ \sum_{\ell=1}^{\infty} \ & 
	\sum_{\underline{j} \in \{1,\ldots, M\}^\ell_\ll} \ 
	\sum_{(\underline{b}, \underline{b}') \in \cB^\ell(\underline{j})} \
	\sum_{(\underline{z}, \underline{z}') \in \cS^\ell(\underline{b}, \underline{b}')} \
	\prod_{i=1}^\ell X_{N,[0,b_i;b_i']}^{\dom}(0,z_i;z'_i) \,.
\end{split}
\end{equation}
Heuristically, the reason why we set $b'_{i-1} = 0$ is that 
$b_i \gg b'_{i-1}$, hence $b_i-b_{i-1}' \approx b_i$ (indeed, note that
$b_i \ge N^{\frac{j_i-1}{M}} \gg N^{\frac{j_{i-1}}{M}} \ge b'_{i-1}$
since $j_i - 1 > j_{i-1}$,
see \eqref{eq:restriction-b} and \eqref{eq:restriction-j}).

\smallskip

We need to prove \eqref{approxdiff}. 
Given $\underline{b}, \underline{b}'$
and $\underline{z}, \underline{z}'$, let us introduce the shortcuts
\begin{equation} \label{eq:XY}
	X_i := X_{N,[b'_{i-1},b_i;b_i']}^{\dom}(z'_{i-1},z_i;z'_i) \,, \qquad
	Y_i :=  X_{N,[0,b_i;b_i']}^{\dom}(0,z_i;z'_i) \,,
\end{equation}
so that, comparing \eqref{Zapprox1} and \eqref{Zapprox2}, we can write 
\begin{equation*}
\begin{split}
	Z_{N,K,M}^{(\cg')} &- Z_{N,K,M}^{(\cg)} 
	= \sum_{\ell=1}^{\infty} \ 
	\sum_{\underline{j} \in \{1,\ldots, M\}^\ell_\ll} \ 
	\sum_{(\underline{b}, \underline{b}') \in \cB^\ell(\underline{j})} \
	\sum_{(\underline{z}, \underline{z}') \in \cS^\ell(\underline{b}, \underline{b}')}
	\Bigg( \prod_{i=1}^\ell Y_i - \prod_{i=1}^\ell X_i \Bigg) \\
	&= \sum_{\ell=1}^{\infty} \
	\sum_{\underline{j} \in \{1,\ldots, M\}^\ell_\ll} \ 
	\sum_{(\underline{b}, \underline{b}') \in \cB^\ell(\underline{j})} \
	\sum_{(\underline{z}, \underline{z}') \in \cS^\ell(\underline{b}, \underline{b}')}
	\, \sum_{h=1}^\ell \,
	\bigg\{ \prod_{i=1}^{h-1} Y_i \bigg\} \, (Y_h - X_h) \,
	\bigg\{ \prod_{i=h+1}^\ell  X_i \bigg\} \,,
\end{split}
\end{equation*}
and note that different terms in the sums are orthogonal in $L^2$.
We justify below the following key estimate, see Lemma~\ref{th:clai}: 
for any $\varepsilon>0$, for $N$ large enough, we can bound for all $i=1,\ldots, \ell$
\begin{equation} \label{eq:clai}
	\bbE\big[ (Y_i - X_i)^2 \big] \le \epsilon^2 \, \bbE[Y_i^2] \,.
\end{equation}
By the triangle inequality, this implies $\bbE[X_i^2]^{1/2} \le (1+\epsilon) \bbE[Y_i^2]^{1/2}
\le 2 \, \bbE[Y_i^2]^{1/2}$, hence
\begin{equation*}
\begin{split}
	\bbE \big[ \big(Z_{N,K,M}^{(\cg')} - Z_{N,K,M}^{(\cg)} \big)^2 \big] 
	 & \le \sum_{\ell=1}^{\infty} \,
	\sum_{\underline{j} \in \{1,\ldots, M\}^\ell_\ll} \,
	\sum_{(\underline{b}, \underline{b}') \in \cB^\ell(\underline{j})} \,
	\sum_{(\underline{z}, \underline{z}') \in \cS^\ell(\underline{b}, \underline{b}')}
	\bigg( \epsilon^2 \sum_{h=1}^\ell 2^{2(\ell-h)} \bigg) 
	\prod_{i=1}^{\ell} \bbE[Y_i^2] \\
	&  \le \epsilon^2 \,\sum_{\ell=1}^{\infty} \,
	4^\ell \sum_{\underline{j} \in \{1,\ldots, M\}^\ell_\ll} \ 
	\sum_{(\underline{b}, \underline{b}') \in \cB^\ell(\underline{j})} \
	\sum_{(\underline{z}, \underline{z}') \in \cS^\ell(\underline{b}, \underline{b}')}
	\, \prod_{i=1}^{\ell} \bbE[Y_i^2]  \,,
\end{split}
\end{equation*}
because $\sum_{h=1}^\ell 2^{2(\ell-h)}
= \frac{4^{\ell} - 1}{4-1}
\le 4^\ell$.
We now enlarge the sum ranges to obtain the factorization
\begin{equation} \label{eq:plugi}
\begin{split}
	& \bbE \big[ \big(Z_{N,K,M}^{(\cg')} - Z_{N,K,M}^{(\cg)} \big)^2 \big] \\
	& \qquad \le \epsilon^2 \, \sum_{\ell=1}^{\infty} \,
	4^\ell \sum_{1 \le j_1 < j_2 < \ldots < j_\ell \le M} \ 
	\, \prod_{i=1}^{\ell} \Bigg\{\,
	\sum_{b_i \le b_i' \in ( N^{\frac{j_i-1}{M}}, N^{\frac{j_i}{M}}]} \
	\sum_{z_i, z'_i \in \Z^2} \ \bbE[Y_i^2] \Bigg\} \,.
\end{split}
\end{equation}
The following asymptotics on the term in brackets is proved in
Appendix~\ref{sec:brackets}.

\begin{lemma}\label{th:brackets}
For any $M \in \N$ and $j\in\{1,\ldots, M\}$ we have
\begin{equation} \label{eq:thebound}
\begin{split}
	\lim_{N\to\infty}  \ \Bigg\{ \,
	\sum_{\substack{b \le b' \in ( N^{\frac{j-1}{M}}, N^{\frac{j}{M}}] \\ z,z'\in\Z^2}} \
	\bbE\big[  X_{N,[0,b;b']}^{\dom}(0,z;z')^2 \big] \,\Bigg\} & \,=\, 
	I_M(j) \,=\, \int_{\frac{j-1}{M}}^{\frac{j}{M}} 
	\frac{\hat\beta^2}{1-\hat{\beta}^2 s } \, \dd s \,.
\end{split}
\end{equation}
\end{lemma}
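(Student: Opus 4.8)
The plan is to reduce the $L^2$ moment to a pure sum over time increments by \emph{summing out} all the spatial variables, and then to recognise the resulting expression as a Riemann sum converging to $I_M(j)$. First I would expand $\bbE[X_{N,[0,b;b']}^{\dom}(0,z;z')^2]$ using \eqref{eq:2momdom} with $a=0$, $x=0$, so that the prefactor is $q_b(z)^2$ and the chain factor is $\prod_{i=2}^k q_{n_i-n_{i-1}}(x_i-x_{i-1})^2$, subject to the dominance constraint $\max_i (n_i-n_{i-1})\le b$. Summing $z=x_1$, $z'=x_k$ and the intermediate $x_2,\ldots,x_{k-1}$ over all of $\Z^2$, translation invariance and the change of variables $y_i:=x_i-x_{i-1}$ decouple the factors, giving $\sum_z q_b(z)^2=u_b$ and $\sum_{y_i}q_{m_i}(y_i)^2=u_{m_i}$ with $m_i:=n_i-n_{i-1}$. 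Hence, writing the sum over $b'$ as a sum over the increments $m_2,\ldots,m_k\ge 1$ with $b'=b+\sum_{i\ge 2}m_i$, the quantity in \eqref{eq:thebound} equals
\[
\sum_{b\in(N^{(j-1)/M},N^{j/M}]} u_b\sum_{k\ge 1}(\sigma_N^2)^k\sum_{\substack{1\le m_i\le b\\ b+\sum_{i=2}^k m_i\le N^{j/M}}}\prod_{i=2}^k u_{m_i}.
\]
The essential feature is that the dominance constraint forces every increment to satisfy $m_i\le b$, so each factor contributes $\sum_{m=1}^b u_m=R_b$ rather than $R_N$; this $b$-dependence is exactly what produces the $s$-dependent denominator in $I_M(j)$.

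For the upper bound I would simply drop the global constraint $b+\sum_i m_i\le N^{j/M}$, so that the increments become unconstrained in $[1,b]$ and the sum over $k$ factorises into a geometric series: the expression is bounded by $\sum_b u_b\,\sigma_N^2/(1-\sigma_N^2 R_b)$, which is summable since $\sigma_N^2 R_N\to\hat\beta^2<1$. Using $u_b\sim\frac{1}{\pi b}$ and $R_b\sim\frac1\pi\log b$ from \eqref{eq:uN}--\eqref{eq:RN}, together with $\sigma_N^2\sim\hat\beta^2\pi/\log N$, the substitution $b=N^s$ (i.e.\ $s=\log b/\log N$) turns this into the Riemann sum for $\int_{(j-1)/M}^{j/M}\frac{\hat\beta^2}{1-\hat\beta^2 s}\,\dd s=I_M(j)$, yielding $\limsup_{N\to\infty}(\cdots)\le I_M(j)$.

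For the matching lower bound I would restrict the sum to a sub-range that still captures the full integral while keeping the total time below $N^{j/M}$, exactly in the spirit of \eqref{eq:arglast}: fix $H\in\N$ and keep only $k\le H$, each $m_i\le b/H$, and $b\le\tfrac12 N^{j/M}$. Then $b'=b+\sum_i m_i\le 2b\le N^{j/M}$, so the global constraint is automatically met, while $R_{b/H}\sim R_b$ and the factor $\tfrac12$ shifts the exponent $s$ only by $\log 2/\log N\to 0$, hence is negligible on the logarithmic scale. After summing the now finite geometric series in $k$ and letting $N\to\infty$, I would obtain the lower bound $\int_{(j-1)/M}^{j/M}\frac{\hat\beta^2\,(1-(\hat\beta^2 s)^{H})}{1-\hat\beta^2 s}\,\dd s$ for $\liminf_{N\to\infty}(\cdots)$, valid for every $H$; letting $H\to\infty$ then gives $\liminf_{N\to\infty}(\cdots)\ge I_M(j)$, and combined with the upper bound this proves the stated limit.

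The main obstacle is the interplay between the dominance constraint $m_i\le b$ and the global upper bound $b'\le N^{j/M}$: one must verify that truncating $k$ and shrinking the increment range in the lower bound loses none of the integral in the limit, which is precisely why the logarithmic scaling and the order of limits ($N\to\infty$ \emph{then} $H\to\infty$) are crucial. The spatial summation and the geometric resummation are routine once the local central limit theorem estimates $u_n\sim\frac1{\pi n}$ and $R_N\sim\frac1\pi\log N$ are in hand.
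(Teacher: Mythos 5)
Your proof is correct, and its upper-bound half coincides with the paper's own argument: dropping the global constraint $b'\le N^{j/M}$, summing out the spatial variables to get $u_b(R_b)^{k-1}$, resumming the geometric series into $\sum_b \sigma_N^2 u_b/(1-\sigma_N^2 R_b)$, and passing to the integral via the substitution $b=N^s$ is precisely \eqref{eq:starting-ub}--\eqref{eq:final-ub}. The difference is in the lower bound. The paper does not re-derive it: it observes that taking $(a,x)=(0,0)$ in the infimum defining $\Xi_{N,M,K}(j)$ in \eqref{eq:Xi} exhibits $\Xi_{N,M,K}(j)$ as a restriction of the sum in \eqref{eq:thebound}, so the matching lower bound follows at once from \eqref{eq:Ij} of Lemma~\ref{lemmaXdomlow}. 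You instead give a direct truncation argument --- $k\le H$, increments $m_i\le b/H$, and $b\le\tfrac{1}{2}N^{j/M}$, so that $b'\le 2b\le N^{j/M}$ holds automatically --- modeled on \eqref{eq:arglast} from the proof of Theorem~\ref{th:singular}, then let $N\to\infty$ followed by $H\to\infty$. Both routes are sound. Yours is more self-contained and, for this particular lemma, more elementary: since $a=x=0$ and all space variables are summed over the whole of $\Z^2$, you avoid entirely the spatially truncated quantities $u^{(K)}_n$, $R^{(K)}_N$ of \eqref{eq:uR} and the local limit theorem estimates \eqref{eq:lbuR}, which the proof of Lemma~\ref{lemmaXdomlow} needs only to achieve uniformity over $(a,x)$. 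What the paper's choice buys is economy: Lemma~\ref{lemmaXdomlow} must be proved anyway (it is also used for \eqref{eq:estZ1} and \eqref{eq:1ste}), so invoking it here costs nothing, whereas your argument re-does, in simplified form, work the paper already carries out. One caveat, shared equally by the paper's version: the asymptotics $u_b\sim\frac{1}{\pi b}$ and $R_b\sim\frac{1}{\pi}\log b$ get applied down to $b=O(1)$ when $j=1$; this is harmless because the contribution of $b\le N^{\epsilon}$ is $O(\epsilon)$ uniformly in $N$, but a fully rigorous write-up would insert this small-$b$ truncation in your Riemann step just as in \eqref{eq:final-ub}.
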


We can plug \eqref{eq:thebound} into \eqref{eq:plugi} 
(where the sum is finite: it can be stopped at $\ell = M$, since for $\ell > M$
there is no choice of $1 \le j_1 < j_2 < \ldots < j_\ell \le M$), which yields
\begin{equation} \label{eq:inan}
\begin{split}
	\limsup_{N\to\infty} \ \bbE \big[ \big(Z_{N,K,M}^{(\cg')} - Z_{N,K,M}^{(\cg)} \big)^2 \big] 
	\le \epsilon^2 \, \sum_{\ell=1}^{\infty} \,
	4^\ell
	\sum_{1 \le j_1 < j_2 < \ldots < j_\ell \le M} \ 
	\, \prod_{i=1}^{\ell} \, I_M(j_i) \qquad\qquad &  \\
	\le \epsilon^2 \, \sum_{\ell=1}^{\infty} \,
	\frac{4^\ell}{\ell!}
	\bigg(\sum_{j=1}^{M} I_M(j) \bigg)^\ell  
	\le \epsilon^2 \, \exp\bigg(
	4 \sum_{j=1}^{M} I_M(j) \bigg)
	= \frac{\epsilon^2}{(1-\hat\beta^2 )^{4}} \,. &
\end{split}
\end{equation}
This completes the proof of \eqref{approxdiff}, since we can take $\epsilon > 0$
as small as we wish.

\smallskip

It only remains to justify \eqref{eq:clai}. The following result is proved in Appendix~\ref{sec:clai}.

\begin{lemma}\label{th:clai}
Given $K,M\in\N$ and $\epsilon > 0$, there exists
$N_0 = N_0(\epsilon, M, K) < \infty$ such that for all $N > N_0$ the following bound holds:
\begin{equation}\label{eq:estXhatest}
\begin{gathered}
	\bbE \big[\big(X_{N,[a,b;b']}^{\dom}(x,z;z')- X_{N,[0,b;b']}^{\dom}
	(0,z;z') \big)^2\big] \le \varepsilon^2 \, 
	\bbE \big[X_{N,[0,b;b']}^{\dom}(0,z;z') ^2\big] \,,
\end{gathered} 
\end{equation}
uniformly for $(a,x), (b,z), (b',z') \in \Z^3_{\mathrm{even}}
= \{y \in \Z^3: \ y_1 + y_2 + y_3 \text{ is even}\}$  such that,
for some $j \in \{1,\ldots, M\}$,
\begin{equation}\label{eq:estXhat}
\begin{gathered}
	a \in [0, N^{\frac{(j-2)^+}{M}}] \,, \quad
	b \in (N^{\frac{j-1}{M}}, N^{\frac{j}{M}}] \,,  \quad
	|x| \le K^2 \sqrt{a} \,, \quad
	|z| \le K \sqrt{b} \,.
\end{gathered} 
\end{equation}
\end{lemma}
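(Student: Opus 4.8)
The plan is to exploit the product structure of $X_{N,[a,b;b']}^{\dom}$ to reduce the claimed inequality to a purely deterministic estimate on the random‑walk kernel. Observe from \eqref{XNdom} that the prefactor $q_{b-a}(z-x)$ and the variable $\eta_N(b,z)$ do not depend on the summation index $k$, nor on the internal path variables $n_2,\ldots,n_{k-1}$, $x_2,\ldots,x_{k-1}$. Hence we may factor
\[
	X_{N,[a,b;b']}^{\dom}(x,z;z') = q_{b-a}(z-x) \, R_{N,[b;b']}(z;z') \,,
\]
where $R_{N,[b;b']}(z;z')$ collects $\eta_N(b,z)$ together with the entire inner sum over $k$ and over the path $(n_i,x_i)_{i\ge 2}$, and in particular \emph{does not depend on $(a,x)$}. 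Since $q_{b-a}(z-x)$ is a deterministic scalar, the second moments factor as well:
\[
	\bbE\big[ X_{N,[a,b;b']}^{\dom}(x,z;z')^2 \big] = q_{b-a}(z-x)^2 \, \bbE\big[ R_{N,[b;b']}(z;z')^2 \big] \,.
\]

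First I would use this to rewrite the difference as
\[
	X_{N,[a,b;b']}^{\dom}(x,z;z') - X_{N,[0,b;b']}^{\dom}(0,z;z') = \big( q_{b-a}(z-x) - q_b(z) \big) \, R_{N,[b;b']}(z;z') \,,
\]
so that both sides of the target inequality \eqref{eq:estXhatest} carry the common factor $\bbE[R_{N,[b;b']}(z;z')^2]$. When this factor vanishes the inequality is trivial; otherwise, dividing it out, \eqref{eq:estXhatest} becomes the deterministic statement
\[
	\big| q_{b-a}(z-x) - q_b(z) \big| \le \epsilon \, q_b(z) \,, \qquad \text{equivalently} \qquad \Big| \tfrac{q_{b-a}(z-x)}{q_b(z)} - 1 \Big| \le \epsilon \,,
\]
to be established uniformly over all $(a,x),(b,z)$ satisfying \eqref{eq:estXhat}. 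Here $q_b(z) > 0$ because the parity condition $(b,z) \in \Z^3_{\mathrm{even}}$ places $z$ on the support of the walk at time $b$.

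The core is then the uniform ratio estimate, which I would obtain from the local central limit theorem \eqref{eq:llt}. Writing $g_n(y) := \frac{2}{\pi n} \, \rme^{-|y|^2/n}$ (the constant is irrelevant, as it cancels), the LCLT gives $q_n(y) = g_n(y)\,(1+o(1))$ with error uniform for $|y| \le A\sqrt n$, for any fixed $A$. The constraints \eqref{eq:estXhat} place us in this diffusive window: from $a \le N^{(j-2)^+/M}$ and $b > N^{(j-1)/M}$ one gets $a/b \le N^{-1/M}\to 0$ (so $b\to\infty$ and $b-a \sim b$), while $|x| \le K^2\sqrt a$, $|z|\le K\sqrt b$ give $|z-x| \le (K+o(1))\sqrt{b-a}$. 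Thus both $q_{b-a}(z-x)$ and $q_b(z)$ obey the LCLT with vanishing relative error, and the ratio of the Gaussian main terms is
\[
	\frac{g_{b-a}(z-x)}{g_b(z)} = \frac{b}{b-a} \, \exp\!\Big( \tfrac{|z|^2}{b} - \tfrac{|z-x|^2}{b-a} \Big) \,.
\]
Here $\frac{b}{b-a}=(1-a/b)^{-1}\to 1$, and expanding $|z-x|^2 = |z|^2 - 2z\cdot x + |x|^2$ the exponent is controlled by the elementary bounds $\tfrac{|z|^2 a}{b(b-a)} \le \tfrac{K^2 a}{b-a}\to 0$, $\tfrac{|z||x|}{b-a} \le K^3\tfrac{\sqrt{a/b}}{1-a/b}\to 0$ and $\tfrac{|x|^2}{b-a}\le\tfrac{K^4 a}{b-a}\to 0$, all uniform in the admissible range. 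Hence the ratio tends to $1$ uniformly, giving the inequality for all $N > N_0(\epsilon,M,K)$.

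The main obstacle is precisely this uniformity over the \emph{whole} diffusive range $|z|\le K\sqrt b$ rather than over a compact set: one must track that $|z|^2/b$ is only bounded (not small), so the cancellation in the exponent rests on the smallness of $a/b$ and $|x|/\sqrt b$ and not on that of $|z|/\sqrt b$. The remaining care is bookkeeping: controlling the LCLT relative errors simultaneously for numerator and denominator (both with time $\sim b\to\infty$) and checking the parity/support conditions so that $q_b(z)$ never degenerates. I note that these estimates require $b\to\infty$, which holds for every $j$ relevant to the application of this lemma (in the proof of \eqref{approxdiff} the factor indexed by $i\ge 2$ always has $j=j_i\ge 3$); the degenerate case $a=0$, where the two kernels coincide and the difference vanishes identically, needs no argument.
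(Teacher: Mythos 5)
Your proof follows the paper's argument essentially verbatim: the paper likewise factors $X_{N,[a,b;b']}^{\dom}(x,z;z') = q_{b-a}(z-x)\cdot R$ with the random factor $R$ independent of $(a,x)$, reduces \eqref{eq:estXhatest} to the uniform ratio estimate $q_{b-a}(z-x)/q_b(z)\to 1$, and proves this via the local limit theorem \eqref{eq:llt+} using exactly the bounds $a=o(b)$, $|x|=o(\sqrt{b})$, $|z|=O(\sqrt{b})$ drawn from \eqref{eq:estXhat}. Your explicit expansion of the Gaussian exponent and your treatment of the degenerate case (only $a=0$ matters when $j=1$, where the difference vanishes identically) simply spell out what the paper handles tersely by restricting to $j\ge 2$.
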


\subsubsection{Step 3: proof of \eqref{approx2hat}}

Recalling \eqref{Xdomj},
we can rewrite $Z_{N, M}^{(\diff)}$ in \eqref{Z2multform} as follows:
\begin{equation}\label{Zapprox3}
\begin{split}
	Z_{N,M}^{(\diff)} = 1+ \sum_{\ell=1}^{\infty} \,
	\sum_{1 \le j_1 < j_2 < \ldots < j_\ell \le M} 
	\sum_{\substack{\underline{b}, \underline{b}' \in \N^\ell: \\ 
	b_i \le b_i' \in ( N^{\frac{j_i-1}{M}}, N^{\frac{j_i}{M}} ]}} 
	\sum_{\underline{z}, \underline{z}' (\Z^2)^\ell} \,
	\prod_{i=1}^\ell X_{N,[0,b_i;b_i']}^{\dom}(0,z_i;z'_i) \,.
\end{split}
\end{equation}
By \eqref{Zapprox2}, we see that $Z_{N,K,M}^{(\cg')}$
is a \emph{restriction} of the sum which defines $Z_{N,M}^{(\diff)}$,
therefore
\begin{equation*}
	\big\| Z_{N,K,M}^{(\cg')} -  Z_{N,M}^{(\diff)} \big\|_{L^2}^2
	= \big\| Z_{N,M}^{(\diff)} \big\|_{L^2}^2
	- \big\| Z_{N,K,M}^{(\cg')} \big\|_{L^2}^2 \,.
\end{equation*}
Then, to prove \eqref{approx2hat}, it is enough to show that
\begin{gather}\label{eq:1ste}
	\liminf_{M\to\infty} \ \liminf_{K\to\infty} \ \liminf_{N\to\infty} \
	\bbE\big[ \big(Z_{N,K,M}^{(\cg')}\big)^2 \big] \,\ge\, \frac{1}{1-\hat\beta^2} \,, \\
	\label{eq:2ste}
	\forall M\in\N: \qquad \limsup_{N\to\infty} \
	\bbE\big[ \big( Z_{N,M}^{(\diff)}\big)^2 \big] \,\le\, \frac{1}{1-\hat\beta^2} \,.
\end{gather}

We first consider \eqref{eq:1ste}.
Recalling \eqref{Zapprox2}, in analogy with \eqref{eq:reca}, we can write
\begin{equation*}
	\bbE\big[ \big(Z_{N,K,M}^{(\cg')}\big)^2 \big]
	= 1+ \sum_{\ell=1}^{\infty} \ 
	\sum_{\underline{j} \in \{1,\ldots, M\}^\ell_\ll} \ 
	\sum_{(\underline{b}, \underline{b}') \in \cB^\ell(\underline{j})} \
	\sum_{(\underline{z}, \underline{z}') \in \cS^\ell(\underline{b}, \underline{b}')} \
	\prod_{i=1}^\ell
	\bbE \big[ X_{N,[0,b_i;b_i']}^{\dom}(0,z_i;z'_i)^2 \big] \,.
\end{equation*}
We can now use the quantity $\Xi_{N,M,K}(j_i)$ defined in \eqref{eq:Xi} to bound
\begin{equation*}
\begin{split}
	\bbE\big[ \big(Z_{N,K,M}^{(\cg')}\big)^2 \big]
	& \,\ge\, 1 + \sum_{\ell=1}^{\infty} \ 
	\sum_{\underline{j} \in \{1,\ldots, M\}^\ell_\ll} \ 
	\prod_{i=1}^\ell  \, \Xi_{N, M,K}(j_i) \,,
\end{split}
\end{equation*}
which coincides with the r.h.s.\ of \eqref{eq:estZ1}. As a consequence,
the bounds from \eqref{eq:quasiliminf} to \eqref{eq:lastZ1}
apply verbatim to $\bbE\big[ \big(Z_{N,K,M}^{(\cg')}\big)^2 \big]$
and show that \eqref{eq:1ste} holds.

We finally consider \eqref{eq:2ste}, which we have essentially already proved. Indeed, note that
$\bbE\big[ \big(Z_{N,K,M}^{(\diff)}\big)^2 \big]$
is given by the second line of \eqref{eq:plugi}
where we replace $\epsilon^2$ and $4^\ell$ by~$1$.
When we apply the limit \eqref{eq:thebound}, we obtain an analogue of \eqref{eq:inan},
again with $\epsilon^2$ and $4^\ell$ replaced by~$1$, which yields
precisely \eqref{eq:2ste}.
This completes the proof of Lemma~\ref{th:brackets}.\qed

\subsection{Proof of Lemma~\ref{th:third-step}}

\label{sec:proof-third-step}

We recall that the event $A_{N,M}$ was defined in 
\eqref{eq:ANM}.
In order to prove~\eqref{approxlogXdom}, it is enough to show that
the following three relations hold:
\begin{gather}
	\label{approxlogZ-1}
	\lim_{M \rightarrow \infty} \ \limsup_{N \to\infty} \ \bbP \bigg( 
	\bigg| \log Z_{N,M}^{(\diff)}- 
	\sum_{j=1}^M \big\{ X_{N,M}^{\dom}(j) -\tfrac{1}{2} X_{N,M}^{\dom}(j)^2 \big\} 
	\bigg| >\varepsilon \,, \ A_{N,M} \bigg) = 0 \,, \\
	\label{approxXdom-1}
	\lim_{M\to\infty} \ \limsup_{N \rightarrow \infty} \ \bigg\| 
	\sum_{j=1}^M X_{N,M}^\dom(j) - X_N^{\dom} \bigg\|_{L^2}=0 \,, \\
	\label{approxXdom-2}
	\lim_{M\to\infty} \ \limsup_{N \rightarrow \infty} \ \bigg\| 
	\sum_{j=1}^M X_{N,M}^\dom(j) ^2
	- \bbE[(X_N^\dom)^2]  \bigg\|_{L^1}=0 \,.
\end{gather}
We are going to exploit the following result.

\begin{lemma}\label{lem:hyper}
Fix $\hat\beta < 1$. For every $M\in\N$ and $j\in\{1,\ldots, M\}$ we have
\begin{equation}\label{M1}
	\lim_{N\to\infty}
	\ \bbE \big[ X_{N,M}^{\dom}(j)^2\big]
	\,=\, \int_{\frac{j-1}{M}}^{\frac{j}{M}} \frac{\hat{\beta}^2}{1-\hat{\beta}^2s} 
	\, \dd s
	\,\le\, \frac{c}{M} \,, \qquad \text{with} \quad
	c = c_{\hat\beta} := \tfrac{\hat\beta^2}{1-\hat\beta^2} \,.
\end{equation}
Moreover, there exist $p_{\hat{\beta}}>2$ 
and $C=C_{\hat{\beta}}<\infty$ such that for all $2 < p \le p_{\hat\beta}$
\begin{equation}\label{M2}
	\forall M\in\N \,, \ \forall j\in \{1,\ldots,M\} : \qquad
	\limsup_{N\to\infty} \ \bbE
	\ \big[|X_{N,M}^{\dom}(j)|^p\big] \le \frac{C}{M^{\frac{p}{2}}} \,.
\end{equation}
\end{lemma}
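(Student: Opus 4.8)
The plan is to derive both displays from second-moment computations, combining the orthogonality of the chaos with the hypercontractivity bound \eqref{eq:hypercontractivity2} from Remark~\ref{rem:hypercontractivity}. For \eqref{M1} I would first note that the summands $X_{N,[0,b;b']}^{\dom}(0,z;z')$ in the definition \eqref{Xdomj} are mutually orthogonal in $L^2$: every monomial occurring in $X_{N,[0,b;b']}^{\dom}(0,z;z')$ has $(b,z)$ as its smallest and $(b',z')$ as its largest time-space point, so distinct tuples $(b,z,b',z')$ contribute disjoint families of monomials. Hence
\begin{equation*}
	\bbE\big[ X_{N,M}^{\dom}(j)^2 \big]
	= \sum_{b\le b'\in (N^{\frac{j-1}{M}},N^{\frac{j}{M}}]} \ \sum_{z,z'\in\Z^2}
	\bbE\big[ X_{N,[0,b;b']}^{\dom}(0,z;z')^2 \big] \,,
\end{equation*}
which is precisely the quantity evaluated in Lemma~\ref{th:brackets}, so the limit equals $I_M(j)$. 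The bound $I_M(j)\le c/M$ is then immediate: the integrand $s\mapsto \hat\beta^2/(1-\hat\beta^2 s)$ is increasing on $[0,1]$, hence at most its value $\hat\beta^2/(1-\hat\beta^2)=c$ at $s=1$, while the interval of integration has length $1/M$.

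For \eqref{M2} I would write $X_{N,M}^{\dom}(j)=\sum_{A\subset\bbT}q_N(A)\,\eta^N(A)$ and apply hypercontractivity. Since the modified disorder $\eta_N$ has uniformly bounded moments of every order (see \eqref{eq:disorder} and \cite[eq.~(6.7)]{CSZ17a}), Remark~\ref{rem:hypercontractivity} gives, for each $p>2$,
\begin{equation*}
	\bbE\big[ |X_{N,M}^{\dom}(j)|^p \big]
	\le \bigg( \sum_{k=1}^\infty C_p^{\,k}\,\phi_k(N) \bigg)^{\frac{p}{2}} \,,
	\qquad
	\phi_k(N):=\sum_{\substack{A\subset\bbT\\ |A|=k}} q_N(A)^2 \,,
\end{equation*}
with $C_p<\infty$ and $C_p\to 1$ as $p\downarrow 2$. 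It therefore suffices to bound the $C_p$-weighted second moment $\sum_k C_p^{\,k}\phi_k(N)$ by $C_0/M$, uniformly in $j\in\{1,\ldots,M\}$ and for all large $N$.

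The key estimate is a geometric bound on $\phi_k(N)$, obtained exactly as in the chain \eqref{eq:arg1}--\eqref{eq:argu}: dropping the domination constraint $\max\{n_i-n_{i-1}\}\le n_1$ (an upper bound) and enlarging each later increment to range freely, I would factorize the order-$k$ contribution as
\begin{equation*}
	\phi_k(N) \le (\sigma_N^2)^{k}
	\bigg( \sum_{b\in(N^{\frac{j-1}{M}},N^{\frac{j}{M}}]} u_b \bigg)
	\bigg( \sum_{m=1}^{\lfloor N^{\frac{j}{M}}\rfloor} u_m \bigg)^{k-1}
	= a_N\, r_N^{\,k-1} \,,
\end{equation*}
where $a_N:=\sigma_N^2\,(R_{N^{j/M}}-R_{N^{(j-1)/M}})$ and $r_N:=\sigma_N^2\,R_{N^{j/M}}$. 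Using $\sigma_N^2\sim\hat\beta^2/R_N$ and $R_n\sim\frac1\pi\log n$ from \eqref{eq:RN} and \eqref{eq:sub-critical}, one finds $a_N\to \hat\beta^2/M$ and $r_N\to \hat\beta^2\,j/M\le\hat\beta^2<1$.

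To conclude I would fix $p>2$ close enough to $2$ that $C_p\hat\beta^2<1$, which is possible since $\hat\beta^2<1$ and $C_p\to1$. Then for all $N$ large (depending on $M$) we have $a_N\le 2\hat\beta^2/M$ and $C_p r_N\le\rho$ for a fixed $\rho<1$, so
\begin{equation*}
	\sum_{k\ge1} C_p^{\,k}\phi_k(N)
	\le C_p\, a_N \sum_{k\ge1}(C_p r_N)^{k-1}
	\le \frac{C_p\,a_N}{1-\rho}
	\le \frac{C_0}{M} \,,
\end{equation*}
and feeding this into the hypercontractivity bound yields $\limsup_N \bbE[|X_{N,M}^{\dom}(j)|^p]\le (C_0/M)^{p/2}=C/M^{p/2}$, which is \eqref{M2}. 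The only delicate point I anticipate is the uniformity in $j$: the worst case $j=M$ sends $r_N\to\hat\beta^2$, which is exactly what forces the requirement $C_p\hat\beta^2<1$ and pins down how close to $2$ the exponent $p$ must be chosen. The dropping of the domination constraint is harmless here because we only need an upper bound in \eqref{M2}; the matching lower bound needed for \eqref{M1} is already supplied by Lemma~\ref{th:brackets}.
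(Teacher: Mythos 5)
Correct, and essentially the paper's own proof: you obtain \eqref{M1} from the orthogonality of the blocks $X_{N,[0,b;b']}^{\dom}(0,z;z')$ (distinct tuples $(b,z,b',z')$ index disjoint families of monomials) together with Lemma~\ref{th:brackets}, and \eqref{M2} from the hypercontractive bound \eqref{eq:hypercontractivity2} with $p>2$ chosen so close to $2$ that $C_p\hat\beta^2<1$ --- exactly the two ingredients the paper uses. The only immaterial deviation is in how the $C_p$-weighted second moment is controlled: the paper re-runs the proof of Lemma~\ref{th:brackets} with $\sigma_N^2$ replaced by $C_p\sigma_N^2$, obtaining $\int_{(j-1)/M}^{j/M}\tfrac{C_p\hat\beta^2}{1-C_p\hat\beta^2 s}\,\dd s \le \tilde c/M$, whereas you use the slightly cruder geometric-series bound $C_p a_N/(1-C_p r_N)$ obtained by dropping the domination constraint; both yield a bound of order $1/M$ with a constant depending only on $\hat\beta$, so your argument is a valid (and marginally more self-contained) rendering of the same proof.
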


\begin{proof}
Relation \eqref{M1} is already proved in \eqref{eq:thebound},
by the definition \eqref{Xdomj} of $X_{N,M}^{\dom}(j)$.

Intuitively, the bound \eqref{M2} holds because $\bbE \big[ |X_{N,M}^{\dom}(j)|^p\big]
\le C \, \bbE \big[ X_{N,M}^{\dom}(j)^2\big]^{\frac{p}{2}}$
by the \emph{hypercontractivity of polynomial  chaos}.
The details are presented in Appendix~\ref{sec:M2}.
\end{proof}

\smallskip

It only remains to prove \eqref{PA_NM} and the three relations
\eqref{approxlogZ-1}-\eqref{approxXdom-2}.

\medskip
\noindent
\emph{Proof of \eqref{PA_NM}.}
For any $p > 2$ we can bound, by Markov's inequality,
\begin{equation*}
	\bbP\big( (A_{N,M})^c \big) \le \sum_{j=1}^{M} \,
	\bbP \big(|X_{N,M}^{\dom}(j)|>\tfrac{1}{2}\big) 
	\le M \, 2^p \, \max_{j\in\{1,\ldots, M\}} \bbE \big[ |X_{N,M}^{\dom}(j)|^p \big] \,, 
\end{equation*}
and relation \eqref{PA_NM} follows directly by \eqref{M2}.\qed

\medskip
\noindent
\emph{Proof of \eqref{approxlogZ-1}.}
By \eqref{Z2multform} we can write 
$\log Z_{N,M}^{(\diff)} = \sum_{j=1}^M \log (1+X_{N,M}^{\dom}(j))$.
If we fix $2<p<\min \{3, \, p_{\hat{\beta}}\}$, with $p_{\hat{\beta}}$ 
as in Lemma~\ref{lem:hyper},
we can bound $| \log (1+x) - \{x-\frac{1}{2}x^2\} | \le c |x|^p$
for $|x| \le \frac{1}{2}$, hence
\begin{equation*}
	\bbE \Bigg[ \bigg| \log Z_{N,M}^{(\diff)}- 
	\sum_{j=1}^M \big\{ X_{N,M}^{\dom}(j) -\tfrac{1}{2} X_{N,M}^{\dom}(j)^2 \big\} 
	\bigg| \, \ind_{A_{N,M}} \Bigg]
	\,\le\, c \, \sum_{j=1}^M \bbE \big[ |X_{N,M}^{\dom}(j)|^p \big]
	\,\le\, c \, \frac{C}{M^{\frac{p}{2}-1}} \,,
\end{equation*}
which proves \eqref{approxlogZ-1}, by Markov's inequality.
\qed

\medskip
\noindent
\emph{Proof of \eqref{approxXdom-1}.}
The polynomial chaos $\sum_{j=1}^M X_{N,M}^{\dom}(j)$ contains less terms than $X_N^\dom$, 
therefore to prove \eqref{approxXdom-1} it is enough to show that for any fixed $M\in\N$
\begin{equation} \label{eq:wehava0}
	\lim_{N\to\infty} \, \bbE \Bigg[\bigg(\sum_{j=1}^M X_{N,M}^{\dom}(j)\bigg)^2\Bigg]
	=\lim_{N \to \infty} \, \bbE \big[\big(X_N^\dom \big)^2\big] = 
	\int_0^1 \frac{\hat{\beta}^2}{1-\hat{\beta}^2s} \, \text{d}s
\end{equation}
where the last equality follows by \eqref{M1}, because $X_N^\dom$ equals $X_{N,M}^\dom(j)$ 
for $M=j=1$ (cf.\ \eqref{eq:XNdom} with \eqref{Xdomj} and \eqref{XNdom}).
Since the variables $X_{N,M}^\dom(j)$'s are centered and independent, a further application 
of \eqref{M1} yields
\begin{equation} \label{eq:wehava}
	\bbE \Bigg[\bigg(\sum_{j=1}^M X_{N,M}^{\dom}(j)\bigg)^2\Bigg]
	=\sum_{j=1}^M \bbE \big[ X_{N,M}^\dom(j)^2 \big] 
	\, \xrightarrow[]{\,N\to\infty\,}\,
	\sum_{j=1}^M \, I_M(j)
	= \int_0^1 \frac{\hat{\beta}^2}{1-\hat{\beta}^2s} 
	\, \text{d}s \,,
\end{equation}
as desired. This completes the proof.
\qed

\medskip
\noindent
\emph{Proof of \eqref{approxXdom-2}.}
In view of the first equalities in \eqref{eq:wehava0} and \eqref{eq:wehava},
it suffices to show that
\begin{equation} \label{eq:wlln}
	\lim_{M\to\infty} \ \limsup_{N\to\infty} \
	\bigg\| \sum_{j=1}^M \big\{ X_{N,M}^\dom(j)^2
	- \bbE\big[ X_{N,M}^\dom(j)^2 \big] \big\}
	\bigg\|_{L^1} = 0 \,.
\end{equation}
This is a weak law of large numbers
for the independent random variables $W_{j} := X_{N,M}^\dom(j)^2$,
which satisfy the following Lyapunov condition (by  \eqref{M2} with $q := p/2$):
\begin{equation} \label{eq:Lya}
	\exists q = q_{\hat\beta} > 1, \ C = C_{\hat\beta} < \infty: \qquad
	\forall M \in \N\qquad
	\limsup_{N\to\infty} \ \max_{j\in\{1,\ldots, M\}} \
	\bbE[W_{j}^q] \le \frac{C}{M^{q}} \,.
\end{equation}

We prove \eqref{eq:wlln} by truncation
at level $T_M := M^{-\alpha}$, for an arbitrary $\alpha \in (\frac{1}{2},1)$. Note that
\begin{equation*}
\begin{split}
	\bigg\| \sum_{j=1}^M W_{j} \, \ind_{\{W_{j} > T_M\}}
	\bigg\|_{L^1}
	& = \sum_{j=1}^M \bbE\big[ W_{j} \, \ind_{\{W_{j} > T_M\}}\big]
	\le \sum_{j=1}^M \frac{\bbE[W_{j}^q]}{T_M^{q-1}}
	\le M^{1+\alpha(q - 1)} \, \max_{j\in\{1,\ldots, M\}} \bbE[W_{j}^q] \,,
\end{split}
\end{equation*}
which, by \eqref{eq:Lya}, vanishes as $N\to\infty$ followed by $M\to\infty$
provided $1+\alpha(q-1)-q < 0$, that is $\alpha < 1$. 
To prove  \eqref{eq:wlln} it only remains to show that
\begin{equation*}
	\lim_{M\to\infty} \ \limsup_{N\to\infty} \
	\bigg\| \sum_{j=1}^M \Big\{ W_{j} \, \ind_{\{W_{j} \le T_M\}}
	- \bbE\big[ W_{j} \, \ind_{\{W_{j} \le T_M\}} \big] \Big\}
	\bigg\|_{L^1} = 0 \,.
\end{equation*}
It is simpler to prove convergence in $L^2$, because this follows by a variance computation:
\begin{equation*}
	\bbvar \bigg(\sum_{j=1}^M W_{j} \, \ind_{\{W_{j} \le T_M\}}
	\bigg) =
	\sum_{j=1}^M
	\bbvar \big( W_{j} \, \ind_{\{W_{j} \le T_M\}} \big)
	\le M \, T_M^2
	= M^{1-2\alpha} \,,
\end{equation*}
which vanishes as $M\to\infty$ provided $1-2\alpha<0$, that is
$\alpha > \frac{1}{2}$.
\qed

\subsection{Proof of Lemma~\ref{th:fourth-step}}
\label{sec:proof-fourth-step}

We first prove \eqref{eq:goal1}.
In view of \eqref{approxlogXdom} and \eqref{PA_NM},
it suffices to show that
\begin{equation}\label{eq:goal1bis}
	\forall \epsilon > 0: \qquad
	\lim_{N\to\infty} \, \bbP \big( \big| \log Z_N^{\beta_N} - \log Z_{N,M}^{(\diff)}
	\big| > \epsilon \,,\, A_{N,M} \big) = 0 \,,
\end{equation}
where we recall that the event $A_{N,M} \subseteq \{Z_{N,M}^{(\diff)} > 0\}$
was defined in \eqref{eq:ANM}.

For any $a,b\in \R$ and $\epsilon, \eta \in (0,1)$ we have the
following inclusion:
\begin{equation*}
	\left\lbrace  |\log a - \log b | > \varepsilon \right\rbrace \subseteq 
	\left\lbrace b <  2 \eta \varepsilon\right\rbrace \cup \{ |a-b|> \eta\epsilon^2 \} \,.
\end{equation*}
Indeed, if both $b \ge 2\eta \epsilon$ and $|a-b| \le \eta\epsilon^2$,
then $a \ge b-\eta\epsilon^2 \ge 2\eta\epsilon - \eta\epsilon^2 \ge \eta\epsilon$,
so that both $a,b \in [ \eta \epsilon, \infty)$, hence
$|\log a - \log b| = |\int_a^b \frac{1}{x} \dd x|
\le \frac{1}{\eta\epsilon}|b-a| \le \frac{1}{\eta\epsilon} \eta\epsilon^2
= \epsilon$. It follows that
\begin{equation*}
	\bbP \big( \big| \log Z_N^{\beta_N} - \log Z_{N,M}^{(\diff)}
	\big| > \epsilon ,\, A_{N,M} \big) \le
	\bbP\big( Z_{N,M}^{(\diff)} < 2\eta\epsilon,\, A_{N,M} \big)
	+ \bbP \big(\big| Z_{N}^{\beta_N} - Z_{N,M}^{(\diff)}\big|
	> \eta\epsilon^2 \big)
\end{equation*}
and note that the second term in the r.h.s.\ vanishes as $N \to\infty$
followed by $M\to\infty$, for any fixed $\epsilon, \eta \in (0,1)$, thanks to \eqref{eq:second-step}. 
It remains to show that
\begin{equation*}
	\forall \epsilon > 0: \qquad
	\lim_{\eta \downarrow 0} \
	\limsup_{M\to\infty} \ \limsup_{N\to\infty} \
	\bbP\big( Z_{N,M}^{(\diff)} < 2\eta\epsilon,\, A_{N,M} \big) \,=\, 0 \,.
\end{equation*}
To this purpose, we can bound
\begin{equation*}
\begin{split}
	\bbP\big( Z_{N,M}^{(\diff)} < 2\eta\epsilon,\, A_{N,M} \big)
	&\le \bbP \Big( \big| \log Z_{N,M}^{(\diff)} - 
	\big\{ X_N^\dom - \tfrac{1}{2} \bbE[(X_N^\dom)^2] \big\} 
	\big| > 1 , \, A_{N,M} \Big) \\
	&\qquad + 
	\bbP\Big( X_N^\dom - \tfrac{1}{2} \bbE[(X_N^\dom)^2] < \log(2\eta\epsilon) + 1 \Big)
\end{split}
\end{equation*}
and note that the first term in the r.h.s.\ vanishes as $N \to\infty$
followed by $M\to\infty$, by \eqref{approxlogXdom}. To show that the second term
vanishes as $N\to\infty$ followed by $\eta \downarrow 0$, we fix
$\eta > 0$ small, so that $\log(2\eta\epsilon) + 1 < 0$,
and we apply Markov's inequality to bound, for some $C<\infty$,
\begin{equation*}
	\bbP\Big( X_N^\dom - \tfrac{1}{2} \bbE[(X_N^\dom)^2] < \log(2\eta\epsilon) + 1 \Big)
	\le \frac{\bbE\big[ \big(X_N^\dom - \tfrac{1}{2} \bbE[(X_N^\dom)^2]\big)^2\big]}
	{|\log(2\eta\epsilon) + 1|^2} 
	\le \frac{C}{ |\log(2\eta\epsilon) + 1|^2 } \,,
\end{equation*}
because $\bbE\big[ \big(X_N^\dom - \tfrac{1}{2} \bbE[(X_N^\dom)^2]\big)^2\big]$
converges to a finite limit as $N\to\infty$, see \eqref{eq:wehava0}.

\smallskip

It only remains to prove \eqref{eq:boundLp}.
The second bound in \eqref{eq:boundLp} follows by
\eqref{M2}, because we already remarked that $X_N^\dom = X_{N,M}^\dom(j)$ 
with $j=M=1$, see \eqref{eq:XNdom} and \eqref{Xdomj}, \eqref{XNdom}.
The first bound in \eqref{eq:boundLp} was proved in \cite{CSZ20}
(see equations (3.12), (3.14) and the lines following (3.16))
exploiting \emph{concentration of measure for the left tail of $\log Z_N$}.\qed

\section{Proof of Theorem~\ref{th:XNGauss}}
\label{sec:XNGauss}

We have already noticed in \eqref{eq:wehava0} that
\begin{equation}\label{eq:convvar}
	\lim_{N\to\infty} \bbE \big[(X_N^{\dom})^2\big] = \sigma^2 
	:= \log \tfrac{1}{1-\hat{\beta}^2} \,,
\end{equation}
which follows by \eqref{M1}, because $X_N^\dom = X_{N,1}^\dom(1)$ 
(see \eqref{eq:XNdom} and \eqref{Xdomj}, \eqref{XNdom}).
Therefore we only need to prove that
\begin{equation} \label{convXdom}
	X_N^{\dom} \xrightarrow[]{\ d \ } \cN\big( 0, \sigma^2 \big) \,.
\end{equation}

We can apply Theorem~\ref{th:main} to the polynomial chaos
$X_N^{\dom}$ defined in \eqref{eq:XNdom}. As in the proof
of Theorem~\ref{th:singular}, we can cast $X_N^{\dom}$ in the
form \eqref{eq:polychaosgen-k} with $\bbT := \N \times \Z^2$
and $\eta^N_t = \eta_N(m,z)$ defined in \eqref{eq:eta},
while for $A:= \{t_1,\ldots,t_{k}\}=\{(n_1,x_1),\ldots,(n_{k}, x_{k})\} \subseteq \bbT$
we set
\begin{equation*}
	\begin{split}
		q_N(A) =(\sigma_N)^{k} \, 
		\ind_{ \left\{ \substack{0=:n_0<n_1<\ldots<n_k \le N \\ \max\{n_2-n_1,\ldots,n_k-n_{k-1}\} 
		\le n_1-n_0} \right\} } \, \prod_{j=1}^{k}q_{n_j-n_{j-1}}(x_j-x_{j-1}) \,.
	\end{split}
\end{equation*}
By Theorem~\ref{th:main}, to prove \eqref{convXdom} we need to verify the following conditions:
\begin{enumerate}
\item \emph{Limiting second moment:}
we already showed that
$\lim_{N\to\infty} \bbE [(X_N^\dom)^2]= \sigma^2$, see \eqref{eq:convvar}.

\item \emph{Subcriticality}:
we need to show that
\begin{equation} \label{eq:fromwhich}
		\lim_{K\to \infty} \ \limsup_{N \rightarrow \infty}  \
		\sum_{\substack{A \subset \mathbb{T}\\ |A| \ge K}}q_N(A)^2=0 \,.
\end{equation}
Arguing as in \eqref{eq:argu}, we can enlarge the 
sums to $1\le n_j-n_{j-1}\le N$ and remove the constraint $\max\{n_2-n_1,\ldots,n_k-n_{k-1}\}
\le n_1-n_0$, to get the bound
\begin{equation*}
\begin{split}
	\sum_{\substack{A \subset \mathbb{T}\\ |A| \ge K}}q_N(A)^2
	& \le \sum_{k=K}^\infty (\sigma_N^2)^k 
	\sum_{\substack{1 \le n_j-n_{j-1} \le N\\ j=1,\ldots,k}} 
	\sum_{\substack{x_1,\ldots,x_k \in \Z^2\\ x_0:= 0}} \, \prod_{j=1}^k q_{n_j-n_{j-1}}(x_j-x_{j-1})^2 \\
	&=\sum_{k=K}^\infty (\sigma_N^2)^k \Big( \sum_{n=1}^N \sum_{x \in \Z^2} q_n(x)^2\Big)^{k} 
	= \sum_{k=K}^\infty(\sigma_N^2 \, R_N)^k
	\,\xrightarrow[]{\,N\to\infty\,}\, \frac{(\hat\beta^2)^K}{1-\hat\beta^2} \,,
\end{split}
\end{equation*}
from which \eqref{eq:fromwhich} follows.

\item\emph{Spectral localization}: given $M,N \in \N$, we define 
disjoint subsets $\bbB_j \subseteq \bbT$ by
\begin{equation*}
	\bbB_j := \big( (N^{\frac{j-1}{M}}, N^{\frac{j}{M}}] \cap \N \big) \times \Z^2
	\qquad \text{for } j=1,\ldots, M \,,
\end{equation*}
and, recalling that $\sigma^2_N (\mathbb{B}_j):= \sum_{A \subset \mathbb{B}_j}q_N(A)^2$,
see \eqref{eq:sigmaB}, we need to show that
\begin{equation*}
\begin{split}
	\lim_{M\to\infty} \ \sum_{j = 1}^M \
	\lim_{N \rightarrow \infty}  \ \sigma^2_N (\mathbb{B}_j) \,=\,
	\sigma^2 \qquad \text{and} \qquad
	\lim_{M \rightarrow \infty} \ \Big\{ \max_{j =
		1, \ldots, M} \ \limsup_{N \rightarrow \infty} \  
		\sigma^2_N (\mathbb{B}_j) \Big\} \,=\, 0 \,.
\end{split}
\end{equation*}
For this it suffices to note that $\sigma^2_N (\mathbb{B}_j) = \bbE[X_{N,M}^{\dom}(j)^2]$
and then to apply \eqref{M1}.
\end{enumerate}
The proof of Theorem~\ref{th:XNGauss} is completed.\qed

\appendix

\section{Some technical results}
\label{sec:technical}

We collect here the proofs of some technical results.

\subsection{Proof of Lemma~\ref{lemmaXdomlow}}
\label{sec:lemmaXdomlow}

We are going to prove that there is a constant $C < \infty$ such that,
for any given $M,K \ \in \N$ and $j \in \{1,\ldots, M\}$, we have
\begin{equation}\label{stimabassoXi}
\begin{split}
	\liminf_{N\to\infty} \ \Xi_{N,M,K}(j) \,\ge\,
	\big(1-(\hat{\beta}^2)^{K}\big) \,
	\int_{\frac{j-1}{M}}^{\frac{j}{M}} 
	\frac{\hat\beta^2 (1- \tfrac{C}{K^2})}
	{1-\hat{\beta}^2(1- \tfrac{C}{K^2}) \, s} \, \dd s \,,
\end{split}
\end{equation}
which clearly implies \eqref{eq:Ij}.

Given $a,b \in \N_0$ as in the range of the sums \eqref{eq:Xi}, we note that for large $N$:
\begin{equation} \label{eq:constr}
	a \le \tfrac{1}{4} K^{-2} b \,.
\end{equation}
This clearly holds if $a=0$, hence for $j=1$,
because $a \le N^{\frac{(j-2)^+}{M}} = 0$, while for $j \ge 2$
from $a \le N^{\frac{j-2}{M}}$ and $b > N^{\frac{j-1}{M}}$
we get $a \le N^{-\frac{1}{M}} b \le \tfrac{1}{4} K^{-2} b$
for large $N$, say $N \ge (2K)^{2M}$.
By \eqref{eq:2momdom}, for fixed $a,b$ and $x$,
the sums over $b' \in [b,Kb]$ and $z,z' \in \Z^2$ in \eqref{eq:Xi} equal
\begin{equation} \label{eq:scrittura}
\begin{split}
	& \sum_{b' \in [b,Kb]} \ \sum_{\substack{|z| \le K \sqrt{b} \\
	|z'| \le K^2 \sqrt{b}}} \
	\bbE \Big[ \big( X^{\rm{dom}}_{N,[a,b_;b']}(x,z;z') 
	\big)^2 \Big] \\
	& \ \,= \sum_{k=1}^\infty (\sigma_N^2)^k  \!
	\sum_{|x_1| \le K \sqrt{b}} q_{b-a}(x_1-x)^2 \!\!\!
	\sum_{\substack{b < n_2 < \ldots < n_{k} \le Kb: \\
	\max\{n_2-b, \ldots, n_k - n_{k-1}\} \le b \\
	x_2,\ldots,x_{k}\in \Z^2: \ |x_k| \le K^2 \sqrt{b}}}
	\prod_{i=2}^k q_{n_i-n_{i-1}}(x_i-x_{i-1})^2 .
\end{split}
\end{equation}
We get a lower bound by keeping
just the first $K$ terms in the sum over $k\in\N$. 
Moreover:
\begin{itemize}
\item we remove the constraint $n_{k} \le Kb$
(because $\max\{n_2-b, \ldots, n_k - n_{k-1}\} \le b$ 
already yields $n_k = b + \sum_{i=2}^k (n_i - n_{i-1}) \le K b$)
and sum freely over the increments
\begin{equation} \label{eq:restr1}
	m_i := n_i - n_{i-1} \in \{1,\ldots, b\} \qquad 
	\text{for } i=2,\ldots, k \,;
\end{equation}

\item we change variables to $y_1 := x_1 - x$ and $y_i  := x_i - x_{i-1}$ for $i \ge 2$,
that we restrict to
\begin{equation*}
	|y_1| \le \tfrac{1}{2} K \sqrt{b-a} \qquad \text{and} \qquad
	|y_i| \le \tfrac{1}{2} K \sqrt{m_i} \qquad \text{for } i \ge 2 \,,
\end{equation*}
which imply both $|x_1| \le K\sqrt{b}$ and $|x_k| \le K^2\sqrt{b}$ as required
by \eqref{eq:scrittura}. Indeed, recalling that $|x| \le K^2 \sqrt{a} \le \frac{1}{2} K \sqrt{b}$
by \eqref{eq:Xi} and \eqref{eq:constr}, we obtain
\begin{gather*}
	|x_1| \le |y_1| + |x| \le \frac{1}{2} K \sqrt{b-a} + \frac{1}{2} K \sqrt{b} \le K\sqrt{b} \,, \\
	|x_k| \le |x_1| + \sum_{i=2}^k |y_i| \le
	K \sqrt{b} + (K-1)\frac{1}{2}K\sqrt{b} \le K^2 \sqrt{b}\,.
\end{gather*}
\end{itemize}
These restrictions yield the following lower bound on \eqref{eq:scrittura}:
\begin{equation}\label{eq:morecom}
	\sum_{k=1}^K (\sigma_N^2)^k \,
	\bigg(\sum_{|y_1| \le \frac{1}{2} K \sqrt{b-a}} q_{b-a}(y_1)^2 \bigg) \,
	\prod_{i=2}^k \bigg( \sum_{m_i=1}^{b} \sum_{|y_i| \le \frac{1}{2} K \sqrt{m_i}} 
	q_{m_i}(y_i)^2 \bigg) \,.
\end{equation}
Recalling that $u_n$ and $R_N$ are defined in \eqref{eq:uN} and \eqref{eq:RN},
we define restricted versions
\begin{equation}\label{eq:uR}
	u^{(K)}_n := \sum_{|y| \le \frac{1}{2} K \sqrt{n}} q_{n}(y)^2 \,, \qquad
	R^{(K)}_N := \sum_{m=1}^N u^{(K)}_m = \sum_{m=1}^N
	\sum_{|y| \le \frac{1}{2} K \sqrt{m}} q_{m}(y)^2  \,,
\end{equation}
so that we can rewrite \eqref{eq:morecom} more compactly as follows:
\begin{equation*}
	\sum_{k=1}^K (\sigma_N^2)^k \,
	u_{b-a}^{(K)} \, \big(R_{b}^{(K)} \big)^{k-1} 
	\,=\, \sigma_N^2 \, u_{b-a}^{(K)} \, \frac{1- \big(\sigma_N^2 R_{b}^{(K)}\big)^K}%
	{1-\sigma_N^2 R_{b}^{(K)}} \,.
\end{equation*}
Bounding $(\sigma_N^2 R_{b}^{(K)})^K \le (\sigma_N^2 R_N)^K$
in the numerator and recalling \eqref{eq:Xi}, we obtain
\begin{equation} \label{eq:boXi}
	\Xi_{N, M,K}(j) \,\ge\,
	\big( 1- \big( \sigma_N^2 R_N \big)^K \big) \,
	\inf_{0 \le a \le  N^{\frac{(j-2)^+}{M}} } \
	\sum_{b \in ( N^{\frac{j-1}{M}} + \log N, \frac{1}{K} N^{\frac{j}{M}} ] } \
	\frac{\sigma_N^2 \, u_{b-a}^{(K)}}{1-\sigma_N^2\, R_{b}^{(K)}} \,,
\end{equation}
where we restricted the sum range to $b \in ( N^{\frac{j-1}{M}} + \log N, \frac{1}{K} N^{\frac{j}{M}} ]$
for later convenience.

We now claim that for some $C < \infty$ we have, for $n, N$ large enough,
\begin{equation} \label{eq:lbuR}
	u_n^{(K)} \ge (1-\tfrac{C}{K^2}) \, \frac{1}{\pi} \, \frac{1}{n} \qquad
	\Longrightarrow \qquad R_N^{(K)} \ge (1-\tfrac{C}{K^2}) \, \frac{1}{\pi} \, \log N \,.
\end{equation}
This follows by \eqref{eq:uR} writing $u_n^{(K)} = u_n - \sum_{|y| > \frac{1}{2}K\sqrt{n}} q_n(y)^2$,
recalling that $u_n \sim \frac{1}{\pi}\frac{1}{n}$ by \eqref{eq:uN}, bounding
$\sup_{y\in\Z^2} q_n(y) \le \frac{c_1}{n}$ by the local limit theorem
(see \eqref{eq:llt} below)
and then estimating
\begin{equation*}
	\sum_{|y| > \frac{1}{2}K\sqrt{n}} q_n(y) = \P(|S_n| > \tfrac{1}{2}K\sqrt{n})
	\le 4 \, \frac{\E[|S_n|^2]}{K^2 \, n} = \frac{4}{K^2} \,.
\end{equation*}
We can plug the bounds \eqref{eq:lbuR} into \eqref{eq:boXi} because,
uniformly for $a,b$ in the sum range, we have
$b \ge b-a \ge \log N \to \infty$ as $N\to\infty$.
Since $\sigma_N^2 \sim \beta_N^2 \sim \pi \hat\beta^2/\log N$,
see \eqref{eq:sub-critical} and \eqref{eq:eta}, for large $N$
we have (possibly enlarging~$C$)
\begin{equation} \label{eq:derhs}
	\frac{\sigma_N^2 \, u_{b-a}^{(K)}}{1-\sigma_N^2 R_{b}^{(K)}} \,\ge\, 
	(1-\tfrac{C}{K^2}) \, \frac{1}{b-a} \, \frac{\frac{\hat\beta^2}{\log N}}{1-
	\, \frac{\hat\beta^2}{\log N} \, (1-\frac{C}{K^2}) \, \log b} \,.
\end{equation}
The r.h.s.\ is a decreasing function of~$b-a$, hence we 
get a lower bound setting $a=0$.
By monotonicity in~$b$, we can then bound the sum in \eqref{eq:boXi}
by an integral:
\begin{equation*}
	\Xi_{N, M,K}(j) \,\ge\,
	(1-\tfrac{C}{K^2})\,
	\big( 1-(\hat\beta^2)^K \big) \,
	\int_{\lceil N^{\frac{j-1}{M}} + \log N \rceil}^{\frac{1}{K} N^{\frac{j}{M}}} \
	\frac{1}{x} \, \frac{\frac{\hat\beta^2}{\log N}}{1-
	\, \frac{\hat\beta^2}{\log N} \, (\log x) \, (1-\frac{C}{K^2})} \, \dd x \,.
\end{equation*}
With the change of variable $x = N^s$, the integral equals
\begin{equation*}
	\int_{a_N}^{b_N} \frac{\hat\beta^2}{1-\hat\beta^2 s \, (1-\frac{C}{K^2})} \, \dd s \qquad
	\text{with} \qquad a_N := \frac{\log \lceil N^{\frac{j-1}{M}} + \log N \rceil }{\log N} \,, 
	\quad b_N := \frac{\log (\frac{1}{K} N^{\frac{j}{M}})}{\log N} \,.
\end{equation*}
Since $\lim_{N\to\infty} a_N = \frac{j-1}{M}$ and $\lim_{N\to\infty} b_N = \frac{j}{M}$,
we have proved \eqref{stimabassoXi}.\qed

\subsection{Proof of Lemma~\ref{th:brackets}}
\label{sec:brackets}

A lower bound for \eqref{eq:thebound} is already provided by \eqref{eq:Ij},
hence it suffices to prove a matching upper bound.
By \eqref{eq:2momdom} with $(a,x)=(0,0)$, we can write
\begin{equation} \label{eq:starting-ub}
\begin{split}
	\sum_{b \le b' \in ( N^{\frac{j-1}{M}}, N^{\frac{j}{M}}]} \
	& \sum_{z, z' \in \Z^2} \ \bbE\big[ 
	X_{N,[0,b;b']}^{\dom}(0,z;z')^2 \big]
	\le \sum_{k=1}^\infty (\sigma_N^2)^k 
	\sum_{b \in ( N^{\frac{j-1}{M}}, N^{\frac{j}{M}}]} \
	\sum_{z\in\Z^2} q_b(z)^2 \\
	& \qquad \times
	\sum_{\substack{b =: n_1 < n_2 < \ldots < n_k < \infty \\
	\max\{ n_2-n_1, \ldots, n_k - n_{k-1} \} \le b}} 
	\ \sum_{\substack{x_1:=z\\ x_2,\ldots,x_{k}\in \Z^2}} 
	\prod_{i=2}^k q_{n_i-n_{i-1}}(x_i-x_{i-1})^2 \,.
\end{split}
\end{equation}
We can sum over the space variables:
by \eqref{eq:uN} and \eqref{eq:RN}, the r.h.s.\ equals
\begin{equation} \label{eq:derhs2}
\begin{split}
	\sum_{k=1}^\infty \, (\sigma_N^2)^k 
	\sum_{b \in ( N^{\frac{j-1}{M}}, N^{\frac{j}{M}}]}
	\, u_{b}  \,
	(R_{b})^{k-1}
	\,=\, 
	\sum_{b \in ( N^{\frac{j-1}{M}}, N^{\frac{j}{M}}]}
	\frac{\sigma_N^2\, u_{b}}{1-\sigma_N^2 \, R_{b}} \,.
\end{split}
\end{equation}
Since $\sigma_N^2 \, u_{b} \sim \frac{\hat\beta^2}{\log N}\frac{1}{b}$ and
$\sigma_N^2 \, R_{b} \sim \frac{\hat\beta^2}{\log N} \log b$, as $N\to\infty$
the r.h.s.\ of \eqref{eq:derhs2} is asymptotic to
\begin{equation} \label{eq:final-ub}
	\begin{split}
	\sum_{b \in ( N^{\frac{j-1}{M}}, N^{\frac{j}{M}}]} 
	\frac{\frac{\hat\beta^2}{\log N} \, \frac{1}{b}}{1- \frac{\hat\beta^2}{\log N} \log b}
	\,\sim\, 
	\int_{N^{\frac{j-1}{M}}}^{N^{\frac{j}{M}}}
	\frac{\frac{\hat\beta^2}{\log N} \, \frac{1}{x}}{1- \frac{\hat\beta^2}{\log N} \log x} \, \dd x 
	\,=\, \int_{\frac{j-1}{M}}^{\frac{j}{M}}
	\frac{\hat\beta^2}{1- \hat\beta^2 \, s\,} \, \dd s\,,
\end{split}
\end{equation}
by the change of variable $x = N^s$. This completes the proof of \eqref{eq:thebound}.\qed

\subsection{Proof of Lemma~\ref{th:clai}}
\label{sec:clai}
We can assume that $j\ge 2$, because if $j=1$ we have $a=0$ and $x=0$,
see \eqref{eq:estXhat}, hence \eqref{eq:estXhatest} trivially holds.

Note that by \eqref{XNdom} we can write
\begin{equation*}
\begin{split}
	& \bbE\big[ X_{N,[a,b;b']}^{\dom}(x,z;z')^2 \big]  
	\,=\, q_{b-a}(z-x)^2 \, F_{N,[b;b']}(z;z') \,, 
\end{split}
\end{equation*}
where we set
\begin{equation*}
	F_{N,[b;b']}(z;z') \,:=\, 
	\sum_{k=1}^\infty (\sigma_N^2)^k 
	\sum_{\substack{b =: n_1 < n_2 < \ldots < n_{k-1} < n_k = b' \\
	1 \le n_2-n_1, \ldots, n_k - n_{k-1} \le b}} 
	\ \sum_{\substack{x_1 := z, \, x_k := z'\\ x_2,\ldots,x_{k-1}\in \Z^2}}  \
	\prod_{i=2}^k q_{n_i-n_{i-1}}(x_i-x_{i-1})^2 \,.
\end{equation*}
The key point is that $F_{N,[b;b']}(z;z')$ does not depend on $(a,x)$.
It follows that
\begin{equation*}
\begin{split}
	\bbE \big[\big(X_{N,[a,b;b']}^{\dom}(x,z;z')- X_{N,[0,b;b']}^{\dom}
	(0,z;z') \big)^2\big]
	= \big( q_{b-a}(z-x) - q_{b}(z) \big)^2 \, F_{N,[b;b']}(z;z') \,,
\end{split}
\end{equation*}
therefore, to prove \eqref{eq:estXhatest},
it is enough to show that for $K,M\in\N$ and $\epsilon > 0$
there is $N_0 = N_0(\epsilon, M, K) < \infty$ such that, for $N > N_0$
and for $a,b,x,z$ as in \eqref{eq:estXhat}, we have
\begin{equation}\label{eq:ggooaall}
	\bigg| 1 - \frac{q_{b}(z)}{q_{b-a}(z-x)} \bigg| \le \epsilon \,.
\end{equation}

We recall the local  limit theorem
\cite[Theorem~2.1.3]{LL10}:
as $n\to\infty$, 
uniformly for $y\in\Z^2$,\footnote{The scaling factor in \eqref{eq:llt} is $n/2$  because the simple
random walk on $\Z^2$ has covariance matrix $\frac{1}{2} I$, while the factor
$2 \, \ind_{(n,y) \in \Z^3_{\mathrm{even}}}$ is due to periodicity.}
\begin{equation} \label{eq:llt}
	q_n(y) = \frac{1}{n/2} \big( g\big(\tfrac{y}{\sqrt{n/2}}\big) + o(1) \big) \,
	2 \, \ind_{(n,y) \in \Z^3_{\mathrm{even}}} \qquad \text{with} \qquad
	g(x) := \frac{\rme^{-|x|^2/2}}{2\pi} \,.
\end{equation}
In particular, for $(n,y) \in \Z^3_{\mathrm{even}}$ in the ``diffusive regime'' we can write
\begin{equation} \label{eq:llt+}
	q_n(y) = \frac{4}{n} \, g\big(\tfrac{y}{\sqrt{n/2}}\big)  \big( 1 + o(1) \big)
	\qquad \text{for } |y| = O(\sqrt{n}) \,.
\end{equation}
Note that $a,b,x,z$ as in \eqref{eq:estXhat} satisfy (recall that $j\ge 2$)
\begin{equation} \label{eq:aba}
	0 \le a \le N^{\frac{j-2}{M}} \le N^{-\frac{1}{M}} b \,, \qquad
	|z| \le K \sqrt{b} \,, \qquad
	|x| \le K^2 \sqrt{a} \le K^2 \sqrt{N^{-\frac{1}{M}}} \, \sqrt{b} \,.
\end{equation}
It follows that for any $K,M\in\N$,
uniformly for  $a,b,x,z$ as in \eqref{eq:estXhat},
we have as $N\to\infty$
\begin{equation*}
	a = o(b) \,, \qquad
	|z| = O(\sqrt{b}) \,, \qquad
	|x| = o(\sqrt{b})  \,,
\end{equation*}
which in turn imply that $|z-x| \le |z| + |x| = O(\sqrt{b}) = O(\sqrt{b-a})$ and hence,
by \eqref{eq:llt+},
\begin{equation*}
	\frac{q_{b}(z)}{q_{b-a}(z-x)}
	= \frac{b-a}{b} \, \exp\bigg(
	\frac{|z-x|^2}{b-a} - \frac{|z|^2}{b} \bigg)
	(1+o(1)) 
	\,\xrightarrow[\,N\to\infty\,]{}\, 1 \,.
\end{equation*}
This completes the proof of \eqref{eq:ggooaall}, hence of \eqref{eq:estXhatest}.\qed

\subsection{Proof of \eqref{M2}}
\label{sec:M2}

The random variables $\eta_N$ in \eqref{eq:eta}
satisfy $\sup_N \bbE[|\eta_N|^{\overline{p}}] < \infty$
for all $\overline{p} < \infty$, by the assumption \eqref{eq:disorder}
(see \cite[eq. (6.7)]{CSZ17a}). We can then estimate $\bbE \big[ |X_{N,M}^{\dom}(j)|^p\big]^{\frac{2}{p}}$
by the hypercontractive bound \eqref{eq:hypercontractivity2}, which gives rise to the
r.h.s.\ of \eqref{eq:starting-ub} with $\sigma_N^2$ replaced by $C_p \, \sigma_N^2$.
We can then follow the proof of Lemma~\ref{th:brackets}
in Appendix~\ref{sec:brackets} verbatim though
\eqref{eq:derhs2} and \eqref{eq:final-ub}, where we note that the
replacement of $\sigma_N^2$ by $C_p \, \sigma_N^2$
amounts to replace $\hat\beta^2$ by $C_p \, \hat\beta^2$, by \eqref{eq:eta} and \eqref{eq:sub-critical}.
Since $\hat\beta < 1$ and $\lim_{p \downarrow 2} C_p = 1$, 
see \cite[Theorem~B.1]{CSZ20},
we can fix $p_{\hat\beta} > 2$ and $\tilde c = \tilde c_{\hat\beta} < 1$ such that
for all $2 < p \le p_{\hat\beta}$ we can bound
$C_p \hat\beta^2 \le \tilde c < 1$, hence
\begin{equation}\label{M1bis}
	\limsup_{N\to\infty}
	\ \bbE \big[ |X_{N,M}^{\dom}(j)|^p\big]^{\frac{2}{p}}
	\,\le\, \int_{\frac{j-1}{M}}^{\frac{j}{M}} \frac{C_p\hat{\beta}^2}{1-C_p\hat{\beta}^2s} 
	\, \text{d}s
	\,\le\, \frac{\tilde c/(1-\tilde c)}{M} \,, 
\end{equation}
which completes the proof.\qed

\end{document}